\documentclass{article}
\usepackage{arxiv}

\usepackage[normalem]{ulem} 

\usepackage[utf8]{inputenc} 
\usepackage[T1]{fontenc}    
\usepackage{url}            
\usepackage{booktabs}       
\usepackage{amsfonts}       
\usepackage{microtype}      
\usepackage{color}
\usepackage{lipsum}
\usepackage{amsthm}
\usepackage{indentfirst}
\usepackage{graphicx}
\usepackage{epstopdf}
\usepackage{fourier}
\usepackage{bm}
\usepackage{bbm} 
\usepackage{mathtools}
\usepackage{latexsym,enumerate}
\usepackage{ulem}



\newcommand{\comment}[1]{}

\newcommand{\BEA}{\begin{eqnarray}}
\newcommand{\EEA}{\end{eqnarray}}

\newcommand{\BR}{\mathbb{R}}

\newtheorem{lemma}{Lemma}[section]
\newtheorem{theorem}{Theorem}[section]
\newtheorem{proposition}{Proposition}[section]
\newtheorem{definition}{Definition}[section]
\newtheorem{corollary}{Corollary}[section]

\newtheorem{remark}{Remark}[section]

\title{Spectral Convergence of Symmetrized Graph Laplacian on manifolds with boundary}

\author{
  J. Wilson Peoples \\
  Department of Mathematics \\
  The Pennsylvania State University, University Park, PA 16802, USA\\
  \texttt{peoplesjwilson@gmail.com} \\
  \And
  John Harlim \\
  Department of Mathematics, Department of Meteorology and Atmospheric Science, \\ Institute for Computational and Data Sciences \\
  The Pennsylvania State University, University Park, PA 16802, USA\\
  \texttt{jharlim@psu.edu} \\
}

\begin{document}

\maketitle

\begin{abstract}
 We study the spectral convergence of a symmetrized graph Laplacian matrix induced by a Gaussian kernel evaluated on pairs of embedded data, sampled from a manifold with boundary, a sub-manifold of $\mathbb{R}^m$. Specifically, we deduce the convergence rates for eigenpairs of the discrete graph-Laplacian matrix to the eigensolutions of the Laplace-Beltrami operator that are well-defined on manifolds with boundary, including the homogeneous Neumann and Dirichlet boundary conditions. For the Dirichlet problem, we deduce the convergence of the \emph{truncated graph Laplacian}, whose convergence is recently observed numerically in applications, and provide a detailed numerical investigation on simple manifolds. Our method of proof relies on a min-max argument over a compact and symmetric integral operator, leveraging the RKHS theory for spectral convergence of integral operator and a recent asymptotic result of a Gaussian kernel integral operator on manifolds with boundary.
\end{abstract}

\keywords{Graph Laplacian \and Laplace-Beltrami operators \and Dirichlet and Neumann Laplacian \and diffusion maps  \and spectral convergence}

\section{Introduction}

The graph Laplacian is a popular tool for unsupervised learning tasks, such as dimensionality reduction \cite{belkin2003laplacian,cl:2006}, clustering \cite{ng2002spectral,von2008consistency}, and semi-supervised learning tasks \cite{belkin2002semi,calder2020properly}. Given a finite set of sample points $X=\{x_1,\ldots, x_n\}$, the graph Laplacian matrix is constructed by characterizing the affinity between any two points in this set. Under a manifold assumption of $X\subset \mathcal{M}$, where $\mathcal{M}$ is a $d$-dimensional Riemannian sub-manifold of $\mathbb{R}^m$, a common method of constructing a graph Laplacian matrix is through kernels that evaluate the distance between pairs of points in $X$ by an ambient topological metric. A motivation for such an approach is that when the kernel is local \cite{cl:2006,berry2016local}, including the case of compactly supported \cite{trillos2020error,ct:2022}, the geodesic distance (a natural metric for nonlinear structure) between pairs of points on the manifold can be well approximated by the Euclidean distance of the embedded data in $\mathbb{R}^m$.  This fact established the graph Laplacian as a foundational manifold learning algorithm. Particularly, it allows one to access the intrinsic geometry of the data using only the information of embedded data when the graph Laplacian matrix is consistent with the continuous counterpart, the Laplace-Beltrami operator, in the limit of large data.

In the past two decades, many convergence results have been established to justify this premise. To the best of our knowledge, there are three types of convergence results that have been reported. The first mode of convergence results is in the pointwise sense, as documented in \cite{belkin2005towards,hein2005graphs,s:2006,hein2006uniform,ting2011analysis,bh:2016}. In a nutshell, this mode of convergence shows that in high probability, the graph Laplacian  acting on a smooth function evaluated on a data point on the manifold converges to the corresponding smooth differential operator, the Laplace-Beltrami (or its variant weighted Laplace operator), applied to the corresponding test function evaluated on the same data point, as the kernel bandwidth parameter $\epsilon \to 0$ after $n\to \infty$. The second mode of convergence is in the weak sense \cite{hein2006uniform,vaughn2024diffusion,v:2020} which characterizes the convergence of an energy-type functional induced by the graph Laplacian to the corresponding Dirichlet energy of the Laplace-Beltrami operator defined on appropriate Hilbert space. The third convergence mode is spectral consistency. The first work that shows such consistency is \cite{belkin2007convergence} without reporting any error rate. Subsequently, a convergence rate for the graph Laplacian constructed under a deterministic setting was shown in \cite{burago2014graph} using Dirichlet energy argument. With the language of optimal transport, the same approach is extended to probabilistic setting in \cite{trillos2020error}, where the authors reported spectral convergence rates of $\mathcal{O} \left(n^{-1/2d} \right)$. In \cite{ct:2022}, the above rate was improved in the $\epsilon$-graph setting by a logarithmic factor. Moreover, for a particular scaling of $\epsilon$, the authors obtained convergence rates of $\mathcal{O} \left(n^{-1/(d+4)} \right)$, a significant improvement for higher dimensions. The above results rely heavily on the connection between eigenvalues and pointwise estimates, which arises due to the min-max theorem. In \cite{wormell2021spectral}, the authors are able to obtain convergence rates in the more general setting of a weighted graph Laplacian, when the manifold is a higher dimensional torus. To achieve this result, a different approach was taken, using spectral stability results of perturbed PDEs, as well as Hardy space embedding results. The rates they obtain are $\mathcal{O}\left( n^{-2/(d + 8) + o(1)} \right)$. Recently, in \cite{dunson2021spectral}, convergence of eigenvectors was proved with rates in the $L^\infty$ sense. This rate is obtained by first deducing an $L^2$ error bound on the eigenvectors, followed by Sobolev embedding arguments. While the rates are similar to those found in \cite{ct:2022}, their method of proof is different. In \cite{calder2022lipschitz}, eigenvector convergence in the $L^\infty$ sense is also found, but it is proved as a consequence of more general Lipschitz regularity results. The rate found in this sense coincides with the $L^2$ convergence rate found in \cite{ct:2022}. Recently, $\Gamma$-convergence, which is a similar idea to weak convergence, has gained popularity, and was used in \cite{trillos2018variational} to prove spectral convergence. 

We should point out that the spectral convergence results mentioned above are only valid on closed manifolds, i.e. compact manifolds without boundary (\cite{l:2003}, pg. 27). For compact manifolds with boundary, it is empirically observed that eigenvectors of the graph Laplacian approximate eigenfunctions of Neumann Laplacian \cite{cl:2006}. Theoretical studies for such a case are available only recently. Particularly, a spectral consistency result was reported in \cite{singer2017spectral} with no convergence rate. In a deterministic setup as in \cite{burago2014graph}, spectral convergence rate with a specific kernel function that depends on the ``reflected geodesics'' was reported in \cite{lu2020graph}. In a probabilistic setup, a spectral convergence rate with a kernel induced by the point integral method, designed for solving PDEs on manifolds, was reported in \cite{ts:2020}. 
 
While these results are encouraging, from a practical standpoint the natural Neumann boundary condition is too restrictive.  Beyond the traditional data science applications, such as dimensionality reduction and clustering, there is a growing interest in using the eigenfunctions of the Laplace-Beltrami operator to represent functions (and operators) defined on manifolds to overcome the curse of ambient dimension with the traditional scientific computational tools. For example, in the Bayesian inverse problem of estimating parameters in PDEs defined on unknown manifolds \cite{harlim2019Kernel,hs:2022}, it was shown that an effective estimation of the distribution of the parameters can be achieved with a prior that is represented by a linear superposition of the eigenvectors of the corresponding graph Laplacian matrix induced by the available data. When the PDE solutions satisfy the Dirichlet boundary condition, as considered in \cite{hs:2022}, it is numerically observed that eigenvectors of an appropriately truncated graph Laplacian matrix approximate eigenfunctions of the Dirichlet Laplacian. The idea of using a truncated graph Laplacian matrix was first proposed by \cite{tg:2019} for solving nonhomogeneous Dirichlet boundary value problems whose solutions correspond to statistical quantities such as the mean first-passage time and committor function that are useful for characterizing chemical reaction applications. These empirical findings motivate the theoretical study in this paper.

In this paper, we study the spectral convergence of the graph Laplacian matrix constructed using the Gaussian kernel on data that lie on a compact manifold with boundary. As we already mentioned, one of the main goals here is to show convergence of the Dirichlet Laplacian, which to our knowledge has not been documented. We will show that the general strategy considered in this work also provides an alternative proof for the closed manifold setting as well as the compact manifold with Neumann boundary conditions. Much of the heavy machinery needed for our arguments correspond to convergence results in the Reproducing Kernel Hilbert Space (RKHS) setting, which are well known and proved in the literature {\color{black}\cite{rosasco2010learning}.} Particularly, to deduce the difference between eigenvalues of the Laplace-Beltrami operator, $\Delta=-\text{div}\circ\nabla$, and the graph Laplacian matrix $\tilde{L}_{\epsilon,n}$, induced by the data in $X$, we apply a min-max argument over the following identity,
\BEA
\quad \quad\|\nabla f\|_{L^2(\mathcal{M})}^{\color{black}^2} - \lambda(\tilde{L}_{\epsilon,n}) = \underbrace{ \|\nabla f\|_{L^2(\mathcal{M})}^{\color{black}^2} - \langle L_\epsilon f, f\rangle_{L^2(\mathcal{M})} }_{\text{approximation error}} 
+  \underbrace{\langle L_\epsilon f, f\rangle_{L^2(\mathcal{M})}-\lambda(\tilde{L}_{\epsilon,n}).}_{\text{discretization error}} \label{generalequality} 
\EEA
where $\lambda(\tilde{L}_{\epsilon,n})$ denotes the eigenvalue of the matrix $\tilde{L}_{\epsilon,n}$ and $L_\epsilon$ denotes an integral operator induced by the discrete Graph Laplacian construction in the limit of large data. {\color{black}Here, the Dirichlet energy is identical to the weak approximation to the Laplace-Beltrami operator, $\|\nabla f\|^2_{L^2(\mathcal{M})} = \langle\Delta f,f\rangle_{L^2(\mathcal{M})}$, which from the integration by parts formula with a vanishing boundary integral term, which occurs both when the manifold has no boundary, and when the manifold has boundary but the integrated function satisfies either homogeneous Neumann or Dirichlet boundary conditions.} In this paper, we consider an integral operator of the form $L_\epsilon = c( I - \hat{K}_\epsilon)$, for some constant $c>0$ and a compact, self-adjoint operator $ \hat{K}_\epsilon$ on $L^2(\mathcal{M})$, which simplifies the min-max arguments for the proof relative to that in unnormalized Graph Laplacian formulations as shown in \cite{burago2014graph,trillos2020error,ct:2022}. Instead, the difficulty in our proof is transferred to the RKHS setting, where \cite{rosasco2010learning} have proved the necessary convergence results between discrete estimator and integral operator. The symmetry requirement in our formulation is because our arguments for bounding the discretization error in Equation \eqref{generalequality} rely on a well-established spectral consistency between an integral operator and its discrete Graph Laplacian matrix in the RKHS setting \cite{rosasco2010learning}. One of the motivations for using the result in \cite{rosasco2010learning} in comparing the spectra of discrete and continuous operators is because it is based on a concrete Nystr\"om interpolation and restriction operators that are commonly used in practical kernel regression algorithms in alternative to the more abstract interpolation and restriction operators introduced in \cite{burago2014graph,trillos2020error,ct:2022}. In fact, in Section \ref{sec6}, we will use Nystr\"om interpolation to approximate the estimated eigenvectors on the same grid points where the reference solution is available when the former estimates are generated based on different sample points (that can be either randomly sampled or well-sampled).
To bound the approximation error in \eqref{generalequality}, we will leverage a recent asymptotic expansion result for the integral operator with Gaussian kernel on compact manifolds with boundary in \cite{vaughn2024diffusion,v:2020}.
We should point out that while one can consider $L_\epsilon$ corresponding to the continuous version of the symmetric normalized graph Laplacian formulation, i.e., 
$L_\epsilon f(x) = c(f(x) - d(x)^{-1/2}K_\epsilon ({f(x)d^{-1/2}(x)})$ for some $c>0$, this formulation only works for closed manifolds {\color{black}(compact manifolds with no boundary, such as sphere or torus)}. For manifolds with boundary, this formulation induces biases in the approximation error, i.e., $\|\nabla f\|_{L^2(\mathcal{M})} - \langle L_\epsilon f, f\rangle_{L^2(\mathcal{M})} \not\to 0$ as $\epsilon\to 0$. In fact, it was shown in \cite{vaughn2024diffusion,v:2020} that the approximation error converges as $\epsilon\to 0$ when $L_\epsilon$ is either $L_\epsilon f(x) = c\big(d(x)f(x) - K_\epsilon f(x)\big)$ or $L_\epsilon f(x) = c\big(f(x) - d(x)^{-1}K_\epsilon f(x)\big)$, which is induced by the unnormalized graph Laplacian or the random-walk normalized graph Laplacian formulations, respectively \cite{chung1997spectral}. Since neither formulations meet our condition of $L_\epsilon = c( I - \hat{K}_\epsilon)$ with a symmetric $\hat{K}_\epsilon$, we will devise a symmetric $\hat{K}_\epsilon$ that allows for a consistent approximation in Equation \eqref{generalequality}, even when the sampling distribution of $X$ is non-uniform. 
 
The main result in this paper can be summarized less formally as follows. For closed manifolds and manifolds under Neumann boundary conditions, we conclude that with high probability,
\BEA
\Big|\lambda_i(\Delta) -\lambda_i(\tilde{L}_{\epsilon,n})\Big| = \mathcal{O}\left( \frac{\sqrt{\log n}}{\epsilon^{d/2+1}n^{1/2}}\right) + \mathcal{O}\left(\epsilon^{1/2} \right), \notag
\EEA
as $\epsilon \to 0$ after $n\to\infty$ (see Theorem~\ref{spectralconvneumann}). Balancing these error bounds, we achieve spectral error estimate of order 
$
\sqrt{\epsilon} \sim \left( n^{-1}\log n \right)^{\frac{1}{2d+6}},
$ 
which is slower than the results presented for closed manifolds in \cite{trillos2020error,ct:2022,wormell2021spectral}. The slower convergence rate here is due to the use of the weak convergence rate \cite{vaughn2024diffusion,v:2020} to overcome the blowup of pointwise convergence near the boundary \cite{cl:2006}. For closed manifolds, one can improve the approximation error rate by using pointwise convergence result such that the overall error is $(n^{-1}\log n )^{\frac{1}{d+4}}$, which is equivalent to the improved rate reported in \cite{ct:2022}. For manifolds with Dirichlet boundary conditions, if we replace $\tilde{L}_{\epsilon,n}$ with a truncated graph Laplacian matrix of size $n_1\times n_1$, where $n_1< n$ corresponds to the number of interior points whose distance to the boundary,  $\partial \mathcal{M}$, is at least $\epsilon^{\gamma}$ with $\gamma \to 1/2$, the convergence rate is found to be $({n}^{-1}\log n)^{\frac{1}{2d+6}}$ (see Theorem~\ref{spectralconvdiri uniform}). Given these spectral error bounds, we follow the method of proof in \cite{ct:2022} to obtain the $L^2$-convergence of the eigenvectors (corresponding to eigenvalues of multiplicity $k\geq 1$) with an error rate 
$
\epsilon^{1/4} \sim \left(n^{-1}\log n\right)^{\frac{1}{4d + 10}}
$ 
in high probability,  after rescaling $\epsilon$ in terms of $n$ to achieve an optimal rate of convergence (see Theorem~\ref{conveigvec}) {\color{black}for the Neumann case.} Again, this can be improved in the case of closed manifolds. {\color{black}For the Dirichlet case, the $L^2$-convergence rate for the eigenvectors is similar to that of the eigenvalue error rate, $({n}^{-1}\log n)^{\frac{1}{2d+6}}$ (see Theorem~\ref{conveigvec-dirichlet-supplement} and Remark~\ref{remonDir}).}
Similar rates are also achieved for non-uniformly sampled data (see Theorems~\ref{spectconvwithq} and \ref{conveigvecwithq} for errors of the eigenvalues and eigenvectors estimations). 

The remainder of this paper is organized as follows. In Section~\ref{sec2}, we give a brief overview of the discrete and continuous estimators for the Laplace-Beltrami operator. In Sections~\ref{neumann-section} and \ref{dirichlet-section}, we present our main results for Neumann and Dirichlet boundary conditions with uniformly sampled data, respectively. To improve the readability, we only present the proof of the spectral convergence results in the main text. We report some intermediate results and the eigenvector convergence proof in Appendices~\ref{rkhs-section}-\ref{sm:conveigvec}. In Section~\ref{sec6}, we document numerical simulations for the truncated graph Laplacian on simple manifolds and numerically demonstrate its consistency with the Dirichlet Laplacian. We close the paper with a summary in Section~\ref{sec7}. For completeness, we report the generalization to non-uniform sampling data in Appendix~\ref{sec5}.

\section{Discrete and Continuous Estimators} \label{sec2}
\noindent
In this section, we give a brief overview of the Laplace-Beltrami operator (with appropriate boundary conditions), the integral operators which serve as continuous estimators, and the matrix discretizations which are used for concrete estimation. Throughout the discussion, we emphasize several theoretical properties of such operators which are key to our results.  

Let $\mathcal{M}$ be a compact, connected, orientable, smooth $d$-dimensional Riemannian manifold embedded in $\mathbb{R}^m$. Consider finitely many data points $X:=\{x_1, \dots , x_n\}\subset \mathcal{M}$ sampled uniformly. Corresponding to the data points is a discrete measure $\mu_n$  given by the formula 
\begin{equation}
    \mu_n : = \frac{1}{n} \sum_{i=1}^n \delta_{x_i} \label{empiricalmeasure}.
\end{equation}
We define discrete inner product $\langle \cdot, \cdot \rangle_{L^2(\mu_n)}$ on functions $u, \tilde{u}: X \to \mathbb{R}$ by 
\begin{equation}
    \langle u, \tilde{u} \rangle_{L^2(\mu_n)} = \int_{\mathcal{M}} u(x) \tilde{u}(x) d\mu_n(x) = \frac{1}{n} \sum_{i=1}^n u(x_i) \tilde{u}(x_i).\notag
\end{equation}
We denote the usual inner product on $L^2(\mathcal{M})$ by $\langle \cdot, \cdot \rangle_{L^2(\mathcal{M})}$. 
\subsection{Laplace-Beltrami and Kernel Integral Operators} \noindent
When $\mathcal{M}$ has no boundary, the Laplacian $\Delta: C^\infty (\mathcal{M}) \to C^\infty (\mathcal{M})$ defined by
\begin{equation}
    \Delta f : = -  \mbox{div}\circ(\nabla f), \notag
\end{equation}
is positive, semi-definite with eigenvalues $0=\lambda_1 < \lambda_2 \leq \dots$. Moreover, there exists an orthonormal basis for $L^2(\mathcal{M})$ consisting of smooth eigenfunctions of $\Delta$ (see Theorem.1.29 of \cite{rosenberg1997laplacian}). The $\ell$-th eigenvalue of the Laplacian, $\lambda_\ell$, satisfies the following min-max condition: 
\begin{equation}
    \label{Continuous courant fisher}
    \lambda_\ell =\min_{S \in \mathcal{G}_\ell} \max_{f \in S \setminus \{0 \}} \frac{\langle \Delta f , f \rangle_{L^2(\mathcal{M})}}{\|f \|^2_{L^2(\mathcal{M})}},
\end{equation}
where $\mathcal{G}_\ell$ is the set of all linear subspaces of $C^\infty(\mathcal{M})$ of dimension $\ell$. This property is key to the proof of spectral convergence. 

A popular method used to estimate these spectra from data points $X$ is by solving an eigenvalue problem of a graph Laplacian type matrix. For example, given a smooth symmetric kernel $k_\epsilon: \mathcal{M} \times \mathcal{M} \to \mathbb{R}$, where $k_\epsilon$ is of the form $k_\epsilon(x,y) = h\left( \frac{\|x - y\|^2}{\epsilon}\right)$ for some exponentially decaying function $h:[0, \infty) \to [0, \infty)$ with first two derivatives also exponentially decaying, we define the corresponding integral operator $K_\epsilon: L^2(\mathcal{M}) \to L^2(\mathcal{M})$ by 
\begin{equation*}
    K_\epsilon f(x) = \epsilon^{-d/2}\int_\mathcal{M} k_\epsilon (x,y) f(y) dV(y),
\end{equation*}
where $V$ denotes the volume form inherited by $\mathcal{M}$ from the ambient space $\mathbb{R}^m$. For closed manifolds (as well as away from the boundary if the manifolds have boundary), it is well known that (see Lemma 8 of \cite{cl:2006}) one has the following pointwise asymptotic expansion
\BEA
  (K_\epsilon f)(x) = m_0 f(x) + \epsilon \frac{m_2}{2} (\omega(x)f(x) - \Delta f(x)) + \mathcal{O}(\epsilon^2),\label{DMasymptotic}
\EEA
for $f\in C^3(\mathcal{M})$ and all $x\in \mathcal{M}$, where the constants $m_0, m_2$ depend on the kernel and $\omega$ depends on the geometry of $\mathcal{M}$. Choosing a Gaussian kernel
$
k_\epsilon(x,y)= \exp(-\frac{\|x-y\|^2}{\epsilon}),
$
we have that the constants $$m_0 = \int_{\mathbb{R}^d} \exp(-\|x\|^2)\,dx,\,\, m_2 = \int_{\mathbb{R}^d} x_1^2 \exp(-\|x\|^2)\,dx, \text{ and }  m_0=\frac{m_2}{2}.$$

Define by $\hat{K}_\epsilon : L^2(\mathcal{M}) \to L^2(\mathcal{M})$ the corresponding integral operator:
\begin{equation}
    \hat{K}_\epsilon f(x) = \int_\mathcal{M} \hat{k}_\epsilon (x,y) f(y) dV(y),\label{operatorK_epsilon}
\end{equation}
with a symmetric kernel,
\BEA
\hat{k}_\epsilon(x,y) : = \epsilon^{-d/2} k_\epsilon(x,y)\left(\frac{1}{2d(x)} + \frac{1}{2d(y)}\right),\label{kernelkhat}
\EEA
where $d(x) := K_\epsilon 1(x)$.
For convenience of later discussion, we also define operators 
\BEA
L_{\textrm{rw},\epsilon}:=  \frac{1}{\epsilon} \Big(I - d^{-1}K_\epsilon  \Big),\label{operatorL_rw}
\EEA
{\color{black}where subscript `rw' corresponds to random-walk (as $d^{-1}K_\epsilon$ corresponds to the transition kernel of a Markov chain defining random walk on data \cite{cl:2006}),}
and
\BEA
L_\epsilon : = \frac{1}{\epsilon}(I - \hat{K}_\epsilon ).\label{symmetricDM}
\EEA
We point out that the definition in Equation \eqref{operatorL_rw} is motivated by the pointwise convergence to Laplace-Beltrami operator $\Delta$ as one can verify with the asymptotic expansion in Equation \eqref{DMasymptotic}. In this paper, we instead consider the symmetric formulation in Equation \eqref{symmetricDM} since the symmetry allows us to conveniently use the RKHS theory and the spectral convergence result of the integral operator \cite{rosasco2010learning} for deducing our main result. Additionally, the symmetry leads to a min-max result for the eigenvalues of the corresponding operator $L_\epsilon$, which is key in relating its spectrum to that of the Laplace Beltrami operator. We outline this fact presently.

Note that $\hat{K}_\epsilon$ is a compact, self-adjoint, operator with positive definite kernel. Hence, the eigenvalues $\sigma^\epsilon_{\ell}$ of $\hat{K}_\epsilon$ are nonnegative, accumulate at $0$, and can be enumerated in decreasing order $\sigma^\epsilon_1 \geq \sigma^\epsilon_2 \geq \dots$ Moreover, 
\BEA
    \sigma_\ell^\epsilon = \max_{S \in (L^2(\mathcal{M}))_\ell} \min_{f \in S \setminus \{0 \}} \frac{\langle \hat{K}_\epsilon f, f \rangle_{L^2(\mathcal{M})}}{\|f \|^2_{L^2(\mathcal{M})}},\notag
\EEA
where $(L^2(\mathcal{M}))_\ell$ denotes the set of all $\ell$-dimensional subspaces of $L^2(\mathcal{M})$. Note that the above minimum is achieved when $S$ is the span of the first $\ell$ eigenfunctions of $\hat{K}_\epsilon.$ Since the kernel $\hat{k}_\epsilon$ is smooth, it follows that $L^2(\mathcal{M})$ has an orthonormal basis consisting of eigenfunctions of $\hat{K}_\epsilon$ which are smooth. Hence, we have the following: 
\BEA
    \sigma_\ell^\epsilon = \max_{S \in \mathcal{G}_\ell} \min_{f \in S \setminus \{0 \}} \frac{\langle \hat{K}_\epsilon f, f \rangle_{L^2(\mathcal{M})}}{\|f \|^2_{L^2(\mathcal{M})}}\notag,
\EEA
where $\mathcal{G}_\ell$ is the set of all linear subspaces of $C^\infty(\mathcal{M})$ of dimension $\ell$.
Let us denote the eigenvalues of $L_\epsilon$ by $\lambda^\epsilon_{\ell} = \frac{2}{m_2\epsilon}\left(1 - \sigma^\epsilon_\ell \right)$. It is easy to see that,
\begin{equation}
    \lambda^\epsilon_\ell = \min_{S \in \mathcal{G}_\ell} \max_{f \in S \setminus \{0 \}} \frac{\langle L_\epsilon f, f \rangle_{L^2(\mathcal{M})}}{\|f \|^2_{L^2(\mathcal{M})}}.\label{L-min-max}
\end{equation}
In \cite{vaughn2024diffusion,v:2020}, it is proved that $L_{rw,\epsilon}$ shares this property for compact manifolds with boundary, which are \textit{manifolds with boundary and of bounded geometry.} For a detailed definition of \textit{manifolds with boundary and of bounded geometry}, see Definition 3.3 in \cite{vaughn2024diffusion}. To informally summarize the key properties, manifolds with boundary and of bounded geometry have: uniform bounds on the curvature, the existence of a sufficiently small radius $r$ such that the exponential map $\exp_x : T_x \mathcal{M} \to  \mathcal{M}$  is a diffeomorphism on a ball of radius $r$,
{\color{black} and the existence of a normal collar, which we formally define presently. 
\begin{definition}
    A manifold $\mathcal{M}$ with boundary admits a normal collar if there exists sufficiently small $R>0$ such that the 
    mapping $\phi: \partial \mathcal{M} \times [0, R) \to \mathcal{M}$ defined by 
    $$
    \phi(x,t) = \exp_x(- t \eta_x),
    $$
     is a diffeomorphism onto its image. In the above, $\eta_x$ denotes the inward facing normal at $x$.
\end{definition}

Given this definition, we can adapt the results outlined above to $L_\epsilon$ as follows.}


\begin{lemma}
\label{weak convergence}
Let $\mathcal{M}$ be a compact, smooth manifold either without boundary, or with a $C^3$ boundary and normal collar. Let $\epsilon>0$, then for $f, \phi \in C^\infty(\mathcal{M})$
\begin{equation*}
    \int_{\mathcal{M}} L_\epsilon f(y)\phi(y) dV(y) =  \int_\mathcal{M}  \nabla f(y) \cdot \nabla \phi(y) dV(y) + \mathcal{O}(\epsilon^{1/2}),
\end{equation*}
as $\epsilon\to 0$. 
\end{lemma}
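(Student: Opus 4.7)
The plan is to reduce the weak convergence of $L_\epsilon$ to the corresponding weak convergence of the random-walk Laplacian $L_{\textrm{rw},\epsilon}$, which is already available from \cite{vaughn2019diffusion,vaughn2020diffusion}. The starting observation is that the kernel $\hat{k}_\epsilon$ in Equation \eqref{kernelkhat} splits into two pieces, yielding
$$\hat{K}_\epsilon f(x) = \frac{1}{2d(x)} K_\epsilon f(x) + \frac{1}{2} K_\epsilon\!\left(\tfrac{f}{d}\right)\!(x).$$
Pairing with $\phi$ and applying Fubini together with the symmetry $k_\epsilon(x,y)=k_\epsilon(y,x)$, the inner product rewrites as
$$\langle \hat{K}_\epsilon f, \phi\rangle_{L^2(\mathcal{M})} = \tfrac{1}{2}\int_\mathcal{M} \tfrac{\phi\, K_\epsilon f}{d}\,dV + \tfrac{1}{2}\int_\mathcal{M} \tfrac{f\, K_\epsilon \phi}{d}\,dV.$$

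Substituting this into $L_\epsilon = \tfrac{2(I-\hat{K}_\epsilon)}{m_2\epsilon}$ and recognizing $\tfrac{1}{m_2\epsilon}\!\left(f - \tfrac{K_\epsilon f}{d}\right) = \tfrac{1}{2}L_{\textrm{rw},\epsilon} f$ from Equation \eqref{operatorL_rw}, I obtain the key identity
$$\langle L_\epsilon f, \phi\rangle_{L^2(\mathcal{M})} = \tfrac{1}{2}\langle L_{\textrm{rw},\epsilon} f, \phi\rangle_{L^2(\mathcal{M})} + \tfrac{1}{2}\langle f, L_{\textrm{rw},\epsilon} \phi\rangle_{L^2(\mathcal{M})}.$$
In other words, the bilinear form associated to the symmetrized operator $L_\epsilon$ is exactly the symmetrization of the (non-symmetric) bilinear form associated to $L_{\textrm{rw},\epsilon}$; this is the structural reason $L_\epsilon$ fits the compact, self-adjoint framework used in Section \ref{sec2}.

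I would then invoke the weak convergence result proved in \cite{vaughn2019diffusion,vaughn2020diffusion} for compact manifolds with $C^3$ boundary and normal collar: for smooth test functions, $\langle L_{\textrm{rw},\epsilon} f, \phi\rangle_{L^2(\mathcal{M})} = \langle \nabla f, \nabla \phi\rangle_{L^2(\mathcal{M})} + \mathcal{O}(\epsilon^{1/2})$. Applying this once to the pair $(f,\phi)$ and once to the swapped pair $(\phi,f)$, and using the symmetry of the Dirichlet form in its two slots, gives the claim
$\langle L_\epsilon f, \phi\rangle_{L^2(\mathcal{M})} = \langle \nabla f, \nabla \phi\rangle_{L^2(\mathcal{M})} + \mathcal{O}(\epsilon^{1/2})$.

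The main obstacle, in my view, is not the identity above (which is routine once the kernel is split) but the careful bookkeeping needed to align the constants and normalization conventions with those of \cite{vaughn2019diffusion,vaughn2020diffusion}, and to verify that the $\mathcal{O}(\epsilon^{1/2})$ error bound there applies to our $L_{\textrm{rw},\epsilon}$ with smooth test functions having no imposed boundary condition. The $\epsilon^{1/2}$ rate (rather than $\epsilon$) is entirely a boundary effect, so the same strategy should yield the sharper $\mathcal{O}(\epsilon)$ bound in the closed-manifold case by substituting the interior pointwise expansion in Equation \eqref{DMasymptotic} for the weak estimate of Vaughn et al.
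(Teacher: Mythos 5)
Your proposal is correct and takes essentially the same route as the paper: your explicit kernel splitting and the resulting identity $\langle L_\epsilon f,\phi\rangle = \tfrac{1}{2}\langle L_{\textrm{rw},\epsilon}f,\phi\rangle + \tfrac{1}{2}\langle L_{\textrm{rw},\epsilon}\phi,f\rangle$ is exactly the paper's observation that $L_\epsilon = \tfrac{1}{2}\bigl(L_{\textrm{rw},\epsilon} + (L_{\textrm{rw},\epsilon})^*\bigr)$, followed in both cases by applying the Vaughn et al.\ weak estimate to each of the two (swapped) pairings and using the symmetry of the Dirichlet form.
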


\begin{proof}[Proof of Lemma~\ref{weak convergence}]
Under these conditions, the result from Corollary 5.3.1 in \cite{v:2020} for the random walks graph Laplacian holds 
\BEA
{\frac{1}{\epsilon}}\int_\mathcal{M} \phi(x) \left(I - \frac{K_\epsilon}{d}\right)f(x) dV(x) = \int_\mathcal{M} \nabla f   \cdot \nabla \phi dV + \mathcal{O}\left( \epsilon^{1/2} \right),\notag
\EEA
for smooth $f, \phi$. Using the definition in \eqref{operatorL_rw}, we can rewrite this equation as follows
\BEA
\langle L_{\textrm{rw},\epsilon}f , \phi \rangle_{L^2(\mathcal{M})} =  \int_\mathcal{M} \nabla f \cdot \nabla \phi dV + \mathcal{O}\left( \epsilon^{1/2} \right).\notag
\EEA
Notice therefore that 
\BEA
\langle (L_{\textrm{rw},\epsilon})^* \phi , f \rangle_{L^2(\mathcal{M})} =  \langle  f , (L_{\textrm{rw},\epsilon})^* \phi \rangle_{L^2(\mathcal{M})}  = \langle  L_{\textrm{rw}, \epsilon}f , \phi \rangle_{L^2(\mu_n)} =
\int_\mathcal{M}   \nabla f \cdot \nabla \phi  dV + \mathcal{O}\left( \epsilon^{1/2} \right) \notag
\EEA
Hence, the same is true for $\frac{1}{2} \left(L_{\textrm{rw},\epsilon} + (L_{\textrm{rw},\epsilon})^* \right)$. It is easy to check that this is precisely $L_\epsilon$. This completes the proof.  
\end{proof}

We note that one can collect the higher-order terms in Propositions  2.3.11 and 3.4.16 of \cite{v:2020} and verify that the constant of the order-$\epsilon^{1/2}$ term is independent of $\epsilon$. This fact will play a crucial role in the spectral convergence analysis where our $f$ will be smooth eigenfunctions of $L_\epsilon$.

When considering the estimation of the Dirichlet Laplacian, we introduce an additional parameter $\gamma$. In particular, fix $0 < \gamma < \frac{1}{2}$, and consider $n$ points sampled uniformly i.i.d. from $\mathcal{M}$, $X=\{x_1, \dots, x_n\}$. Denote by 
\BEA
\mathcal{M}_{\epsilon^\gamma} : = \left\{ x \in \mathcal{M} : \textrm{inf}_{y \in \partial \mathcal{M}} \| x - y\|_g > \epsilon^\gamma \right\}, \label{Mawayfrbdry}
\EEA
as the set of all points of distance at least $\epsilon^\gamma$ away from the boundary. Denote by $n_0$ the cardinality of $X \cap (\mathcal{M} \setminus \mathcal{M}_{\epsilon^\gamma})$, and denote by $n_1$ the cardinality of $X \cap \mathcal{M}_{\epsilon^\gamma}$. A key integral operator relevant for the estimation of the Dirichlet Laplacian is analogous to $L_\epsilon$, except that the integration occurs only over points sufficiently far away (on $\mathcal{M}_{\epsilon^\gamma}$) from the boundary. Define $L^\gamma_\epsilon: L^2(\mathcal{M}_{\epsilon^\gamma}) \to L^2(\mathcal{M}_{\epsilon^\gamma})$ by 
\BEA
L^\gamma_\epsilon f(x) = \frac{1}{\epsilon} \left( f(x) - \int_{\mathcal{M}_{\epsilon^\gamma}} \hat{k}_\epsilon(x,y)f(y) dV(y) \right). \label{truncated-integral-operator}
\EEA
The key theoretical properties for this operator are analogous to those of $L_\epsilon$ (such as the min-max formula for its eigenvalues), and hold using the same arguments. 

\subsection{Discretized Integral Operators} \noindent
Discretizing $L_\epsilon$ using the data set, we obtain a matrix $L_{\epsilon, n} : L^2(\mu_n) \to L^2(\mu_n)$ defined by 
\begin{equation}
    (L_{\epsilon, n} u)(x) = \frac{1}{\epsilon} \left( u(x) - \frac{1}{n} \sum_{i=1}^n \hat{k}_\epsilon (x,x_i) u(x_i)\right).\label{Lepsilonn}
\end{equation}
It is easy to see that $L_{\epsilon, n} $ is a positive definite, self-adjoint operator with respect to the inner product $L^2(\mu_n)$. As such, the eigenvalues of $L_{\epsilon,n}$ can be listed in increasing order: $0 \leq \lambda_1^{\epsilon,n} \leq \lambda_2^{\epsilon,n} \leq \ldots \leq \lambda^{\epsilon,n}_n$. In practice, unfortunately, this discretization is not directly accessible, since the evaluation of $\hat{k}_\epsilon(x_i,x_j)$ involves the computation of integrals $d(\cdot) = \epsilon^{-\frac{d}{2}} \int_\mathcal{M} k_\epsilon (\cdot,y) dV(y)$ evaluated on $x_i$ and $x_j$. To amend this, we introduce a second discretization. Let 
\begin{equation}\label{kernelktilde}
    \tilde{k}_{\epsilon,n}(x_i,x_j) = k_\epsilon (x_i,x_j) \left( \frac{1}{\frac{2}{n}\sum_{k=1}^n  k_\epsilon(x_i,x_k) } + \frac{1}{\frac{2}{n}   \sum_{k=1}^n k_\epsilon(x_j,x_k)} \right).
\end{equation}
The resulting discrete operator, which is computationally accessible, is given by,
\begin{equation*}
    \tilde{L}_{\epsilon,n}u(x) = \frac{1}{\epsilon} \left( u(x) - \frac{1}{n} \sum_{i=1}^n \tilde{k}_{\epsilon,n}(x,x_i)u(x_i) \right).
\end{equation*}
We denote the $i$th-eigenvalue of this matrix as $\tilde{\lambda}^{\epsilon,n}_{i}$.  

For Dirichlet Laplacian, we discretize \eqref{truncated-integral-operator} with the kernel in \eqref{kernelktilde}. This results in the $n_1 \times n_1$ submatrix of $\tilde{L}_{\epsilon, n}$ corresponding to points of distances of at least $\epsilon^\gamma$ away from the boundary. In particular, reorder $X=\{x_1, \dots, x_n\}$ so that the first $n_1$ points are a distance of at least $\epsilon^\gamma$ away from the boundary. We define $\tilde{L}^\gamma_{\epsilon,n}: L^2(\mu_{n_1}) \to L^2(\mu_{n_1})$ by 
\begin{equation}
    \tilde{L}^\gamma_{\epsilon,n}u(x) = \frac{1}{\epsilon} \left( u(x) - \frac{1}{n} \sum_{i=1}^{n_1} \tilde{k}_{\epsilon,n}(x,x_i)u(x_i) \right).\label{Legamma}
\end{equation}
Denote the eigenvalues of the above matrix by $\tilde{\lambda}^{\gamma, \epsilon, n}_i$. Since $\tilde{L}^\gamma_{\epsilon,n}$ is an $n_1 \times n_1$ truncation of  $\tilde{L}_{\epsilon,n}$, it is clear this matrix is also positive definite, and self-adjoint with respect to the $L^2(\mu_{n_1})$ inner product.

\section{Spectral Convergence Results for Neumann Boundary Conditions} \label{neumann-section}
In this section, we present our main results for the convergence of eigenvalues and eigenvectors of $\tilde{L}_{\epsilon,n}$ to $\Delta$ with homogeneous Neumann boundary conditions. Recall that when $\mathcal{M}$ is a compact manifold with boundary, a Neumann eigenfunction (resp. Neumann eigenvalue) is a function $f$ (resp. scalar $\lambda$) satisfying the following system of equations
\BEA 
\Delta f = \lambda f, \quad\quad \frac{\partial f}{\partial \hat{n}}{\color{black}\Big|_{\partial \mathcal{M}}} = 0,\notag
\EEA
{\color{black}where $\hat{n}$ denotes the unit normal vector to $\partial M$.}
Let us enumerate the Neumann eigenvalues $0 = \lambda_1 < \lambda_2 \leq \lambda_3 \leq \dots.$ We have the following min-max result: 
\BEA
\label{Neumann energy}
\lambda_i = \min_{S_i} \max_{f\in S_i\setminus\{0\}} \frac{\|\nabla f\|^2_{L^2}}{\|f\|^2_{L^2}},
\EEA
where $S_i$ is traditionally taken over all $i-$dimensional subspaces of $H^1(\mathcal{M})$. This minimum is achieved when $S_i$ is taken to be the span of the first $i$ eigenfunctions of $\Delta$ {\color{black}(the min-max formulation is consistent with the variational formulation in  \cite{colbois2013eigenvalues} and with the Rayleigh formulation for bounded domain in $\BR^n$ in \cite{ashbaugh1993universal,benguria2020sharp})}. Since it can be shown that the eigenfunctions of the Neumann Laplacian are smooth, the above formula holds when the minimization is taken over all $i-$dimensional spaces of smooth functions. In the setting below, the discrete estimator $\tilde{L}_{\epsilon,n}$ is constructed based on uniformly i.i.d samples in $X$. For non-uniform i.i.d.~case, see Appendix~\ref{sec5}.

\begin{theorem} \label{spectralconvneumann} (Convergence of Neumann eigenvalues) Fix $i \in \mathbb{N}$, and
let $\lambda_i$ be the $i$-th eigenvalue of the Neumann Laplacian. For $n>i$ and $\epsilon>0$ with probability greater than $1 - \frac{3}{n^2}$, we have that 
 \begin{equation*}
    |\lambda_{i} - \tilde{\lambda}^{\epsilon,n}_{i}| = \mathcal{O}\left( \frac{  \sqrt{\log n}}{\epsilon^{d/2+1}n^{1/2}} ,\epsilon^{1/2} \right).
\end{equation*}
\end{theorem}

{\color{black}
\begin{remark}
Throughout this paper, we use a shorthand big-O notation $\mathcal{O}(f,g)=\mathcal{O}(f)+\mathcal{O}(g)$, where the right hand implies as $f\to 0$ and $g\to 0$. In the above estimate (and in many other subsequent results), since the first error term of order, $ \frac{\sqrt{\log n}}{\sqrt{n} \epsilon^{d/2+1}}$, is computed for a fixed $\epsilon>0$, the big-oh notation means $n\to\infty$. Together with the second error term $\mathcal{O}(\epsilon^{1/2})$, we conclude that $\epsilon\to 0$ after $n\to\infty$. 
\end{remark}}

\begin{proof} Let $f \in C^{\infty}(\mathcal{M})$. Notice that we have 
\begin{equation*}
     \| \nabla f  \|^2_{L^2(\mathcal{M})} - \tilde{\lambda}^{\epsilon,n}_i = \| \nabla f  \|^2_{L^2(\mathcal{M})}  -  \langle L_\epsilon f, f \rangle_{L^2(\mathcal{M})}  +   \langle L_\epsilon f , f  \rangle_{L^2(\mathcal{M})} -  \tilde{\lambda}^{\epsilon,n}_i.
\end{equation*}
Fix any subspace $S \subseteq C^{\infty}(\mathcal{M})$ of dimension $i$. Maximizing both sides over all $f \in S \setminus \{0\}$ with $\|f\|_{L^2(\mathcal{M})} = 1$, the above equality still holds. Using subadditivity of the maximum, we obtain
\BEA
     \max_{f \in S, \|f\|=1} \| \nabla f  \|^2_{L^2(\mathcal{M})} - \tilde{\lambda}^{\epsilon,n}_i \leq \max_{f \in S, \|f\| = 1}\left( \| \nabla f  \|^2_{L^2(\mathcal{M})} -  \langle L_\epsilon f, f \rangle_{L^2(\mathcal{M})} \right)+  \max_{f \in S, \|f\|=1} \langle L_\epsilon f , f  \rangle_{L^2(\mathcal{M})} -  \tilde{\lambda}^{\epsilon,n}_i.\notag
\EEA
Since this inequality holds for any $i$-dimensional subspace $S$, choosing the subspace $S'$ for which the term $\max_{f \in S, \|f\|=1} \langle L_\epsilon f , f  \rangle_{L^2(\mathcal{M})}$ is at a minimum, the inequality again holds. Using min-max principle, this shows that 
\BEA
 \max_{f \in S', \|f\| = 1}  \| \nabla f  \|^2_{L^2(\mathcal{M})} - \tilde{\lambda}^{\epsilon,n}_i  \leq    \max_{f \in S', \|f\| = 1} \left( \| \nabla f  \|^2_{L^2(\mathcal{M})} - \langle L_\epsilon f, f \rangle_{L^2(\mathcal{M})} \right) +   \lambda^\epsilon_i - \tilde{\lambda}^{\epsilon,n}_i. \notag
\EEA
Notice that the left hand side of the above equation is certainly an upperbound for the minimum of $\textrm{max}_{f \in S, \|f\| = 1}  \| \nabla f  \|^2_{L^2(\mathcal{M})} - \tilde{\lambda}^{\epsilon,n}_{i}$ over all $i$ dimensional subspaces $S$ consisting of smooth functions. Hence, using min-max principle we obtain: 
\begin{equation}
   \lambda_{i} - \tilde{\lambda}^{\epsilon,n}_i \leq \max_{f \in S', \|f\| = 1} \left( \| \nabla f  \|^2_{L^2(\mathcal{M})} - \langle L_\epsilon f, f \rangle_{L^2(\mathcal{M})} \right)  +   |\lambda^\epsilon_i - \tilde{\lambda}^{\epsilon,n}_i|. \label{oneside}
\end{equation}
The first term on the right-hand side can be bounded via Lemma \ref{weak convergence}, while the second term is bounded using an adaptation of well known results in the RKHS literature (see Lemma \ref{adapted-rbv-supplement}) and Lemma~\ref{matrix-eigenvalues-supplement} that accounts for the approximation in \eqref{kernelktilde} of the kernel \eqref{kernelkhat}. The fact that the upper bound in Lemma~\ref{weak convergence} is of order-$\epsilon^{1/2}$ with constant that is independent to $\epsilon$ allows the min-max argument over $S'$, an $i$-dimensional subspace span of first $i$ eigenfunctions of $L_\epsilon$, to be valid. The proof is completed with a lower bound with similar arguments.
\end{proof}

The rate presented in the theorem above is slower than the available rate in literature. This is expected, and is due to the use of weak convergence result of Lemma \ref{weak convergence} in bounding the first error difference in \eqref{oneside}. A sharper convergence rate can be achieved by Cauchy-Schwartz and using the pointwise convergence of order-$\epsilon$  such as \eqref{DMasymptotic}. If one applies this strategy, one achieves a rate convergence rate of $\mathcal{O}\Big(\epsilon,\frac{\sqrt{\log n}}{\sqrt{n}\epsilon^{d/2 + 1}}\Big)$. Balancing these rates, we obtain an error of $\epsilon \sim (n^{-1}\log n)^{\frac{1}{d+4}}$, which agrees with the rate reported in \cite{ct:2022}, and is competitive with recent results in the literature.

We also obtain results for the convergence of eigenvectors, which can be improved for closed manifolds using the same argument as above. Since the proof mimics arguments in the literature \cite{ct:2022}, we leave it to Appendix~\ref{app:proof of eig}.
\begin{theorem}\label{conveigvec}
Let $\Delta$ denote the Neumann Laplacian. For any $\ell$, let $c_\ell$ be a constant such that $C' \left( \frac{\sqrt{\log n}}{\epsilon^{d/2+1}n^{1/2}} + \epsilon^{1/2}  \right) < c_\ell$, {\color{black} where $C'>0$ denotes the largest constant absorbed in the spectral convergence rate in Theorem~\ref{spectralconvneumann}.}
For any normalized eigenvector $u$ of $\tilde{L}_{\epsilon,n}$ with eigenvalue $\tilde{\lambda}^{\epsilon,n}_\ell$, there is a normalized eigenfunction $f$ of $\Delta$ with eigenvalue $\lambda_\ell$ such that with probability higher than $1- \frac{2k^2 + 4k + 6}{n^2}$,
\begin{equation*}
    \| R_n f - u \|_{L^2(\mu_n)} =  \mathcal{O}\left(\frac{ \sqrt{\log (n)} }{\epsilon^{ d/2+1}\sqrt{n}},\epsilon^{\gamma/2} \right),
\end{equation*}
as $\epsilon\to 0$ after $n\to\infty$, where $k$ is the geometric multiplicity of eigenvalue $\lambda_\ell$, and $\gamma$ can be chosen arbitrarily close to $1/2$. {\color{black}Here, $R_n:C^\infty(\mathcal{M})\to \BR^n$ denotes the restriction operator defined as $R_nf = (f(x_1),\ldots,f(x_n))$ for any $f\in C^\infty(M)$.} 
\end{theorem}

\section{Spectral Convergence Results for Dirichlet Boundary Conditions}
\label{dirichlet-section} 
In this section, we present convergence results of eigenvalues of the truncated graph laplacian to $\Delta$ satisfying Dirichlet boundary conditions. For the convergence of eigenvectors in this setting, see Theorem~\ref{conveigvec-dirichlet-supplement} for the details.

Recall now that a Dirichlet eigenfunction (resp. eigenvalue) is a function $f$ (resp. scalar $\lambda$) satisfying the following system of equations: 
\BEA 
\Delta f = \lambda f , \quad\quad f\vert_{\partial \mathcal{M}} = 0. \notag
\EEA
Let us enumerate the Dirichlet eigenvalues $0 < \lambda_1 < \lambda_2 \leq \lambda_3 \leq \dots $. In this case, the min-max result in \eqref{Neumann energy} is valid except that
$S_i$ is taken over all $i$ dimensional subspaces of $H_0^1(\mathcal{M})$, functions in $H^1(\mathcal{M})$ which vanish on the boundary $\partial \mathcal{M}$. This minimum is achieved when $S_i$ is taken to be the span of the first $i$ eigenfunctions of the Dirichlet Laplacian. Since it can be shown that the eigenfunctions of the Dirichlet Laplacian are smooth, the above formula holds when the minimization is taken over all $i$ dimensional spaces of smooth functions which vanish on the boundary of $\mathcal{M}$. The main result is stated as follow:

\begin{theorem}
\label{spectralconvdiri uniform}
Fix an eigenvalue $\lambda_i$, and suppose $X = \{x_1 , \dots , x_n\}$ is a dataset of $n$ points which are sampled uniformly from $\mathcal{M}$. Let $\epsilon>0$ and $0 < \gamma < 1/2$ be such that 
\[
n^{-1/2} \log(n) \ll \epsilon^{1/2} < \epsilon^\gamma \ll 1.
\]
Suppose that $n_1 > i$, where $n_1$ denotes the number of points whose distance from the boundary is at least $\epsilon^\gamma$.
Then with probability higher than  $1 - \frac{6}{n^2} - \frac{4(2 \sqrt{n} \log(n) + 2n)}{n^3}$,
 \begin{equation}
  |\lambda_{i} - \tilde{\lambda}^{\gamma,\epsilon,n}_{i}| = \mathcal{O}\left( \epsilon^{1/2}, \epsilon^{3 \gamma -1}, \frac{\sqrt{\log(n)}}{\epsilon^{d/2 + 1}\sqrt{n}}\right).\label{spectralconvdirirate}
\end{equation}
as $\epsilon \to 0$ after $n \to \infty$, where $\tilde{\lambda}^{\gamma,\epsilon,n}_{i}$ denotes eigenvalues of  $\tilde{L}^\gamma_{\epsilon,n}$ as defined in \eqref{Legamma}.
\end{theorem} 

Since our theoretical estimate of the eigenvalues for a truncated graph Laplacian relies on applying Proposition 10 from \cite{rosasco2010learning} to a dataset of points that lie sufficiently far away from the boundary, which requires the data to be sampled i.i.d. on this region (i.e., independent to the choice of $\gamma$), we define:
\begin{definition}\label{sampling2domain}
Let $X^0_\gamma$ and $X_\gamma^1$ denote datasets of size $n_0$ and $n_1$ points, respectively. They are two separate uniformly i.i.d. from $\mathcal{M} \setminus \mathcal{M}_{\epsilon^\gamma}$ and  $\mathcal{M}_{\epsilon^\gamma}$, respectively. Also, let $n=n_0+n_1$. 
\end{definition}

For a fixed $0< \gamma < 1/2$, a set of $n$ points sampled i.i.d. uniformly from $\mathcal{M}$ is related to the sampling setup in Definition~\ref{sampling2domain} in the following way. Note that the density of a uniform distribution over the entire manifold, $q: \mathcal{M} \to [0,1]$ can be rewritten as $  \frac{\textup{Vol}(\mathcal{M}_{\epsilon^\gamma} )}{\textup{Vol}(\mathcal{M})}\left( \frac{1}{\textup{Vol}(\mathcal{M}_{\epsilon^\gamma})} \chi_{\mathcal{M}_{\epsilon^\gamma}}(x) \right) +  \frac{\textup{Vol}(\mathcal{M} \setminus \mathcal{M}_{\epsilon^\gamma})}{\textup{Vol}(\mathcal{M})} \left( \frac{1}{ \textup{Vol}( \mathcal{M} \setminus \mathcal{M}_{\epsilon^\gamma} )} \chi_{\mathcal{M} \setminus \mathcal{M}_{\epsilon^\gamma}(x) }  \right)$, for all $x\in \mathcal{M}$. From this relation, it is clear that each trial of the uniform distribution over $\mathcal{M}$ corresponds to a uniform sample from $\mathcal{M}_{\epsilon^\gamma}$ with probability $\frac{\textup{Vol}(\mathcal{M}_{\epsilon^\gamma})}{\textup{Vol}(\mathcal{M})}$ or a uniform sample from $\mathcal{M} \setminus \mathcal{M}_{\epsilon^\gamma}$ with probability $1-\frac{\textup{Vol}(\mathcal{M}_{\epsilon^\gamma})}{\textup{Vol}(\mathcal{M})}$. 

 Hence, given $n$ points sampled uniformly i.i.d. from $\mathcal{M}$, the number of points in $\mathcal{M}_{\epsilon^\gamma}$ forms a binomial random variable with $n$ trials and success rate $\frac{\textup{Vol}(\mathcal{M}_{\epsilon^\gamma} )}{\textup{Vol}(\mathcal{M})}$, while the number of points in $\mathcal{M} \setminus \mathcal{M}_{\epsilon^\gamma}$ forms a binomial random variable with $n$ trials and success rate $\frac{\textup{Vol}(\mathcal{M} \setminus \mathcal{M}_{\epsilon^\gamma})}{\textup{Vol}(\mathcal{M})}$. To simplify the notation, we assume $\textup{Vol}(\mathcal{M}) = 1$, without loss of generality.
 
 In the proposed sampling setup, an unbiased estimator for the normalization factor $d$ is given by 
\BEA
\quad\quad \tilde{d}_\gamma(x_i) &=& \frac{\textup{Vol} (\mathcal{M}_{\epsilon^\gamma})}{n_1} \sum_{i=1}^{n_1}  \epsilon^{-d/2} k_\epsilon (x_i, x_\ell)  
\notag \\ &+& \frac{\textup{Vol}(\mathcal{M} \setminus \mathcal{M}_{\epsilon^\gamma})}{n_0} \sum_{\ell = n_1+1}^{n}  \epsilon^{-d/2} k_\epsilon(x_i,x_\ell).  \label{dgamma}
\EEA
Using this identity, we define the symmetrized kernel as,
$$
\tilde{k}^\gamma_\epsilon (x_i,x_j) := \epsilon^{-d/2}k_\epsilon(x_i,x_j) \left( \frac{1}{2 \tilde{d}_{\epsilon^\gamma}(x_i)} + \frac{1}{2 \tilde{d}_\gamma (x_j)} \right).
$$
Denote by $\tilde{L}^\gamma_{\epsilon,n}$ the following $n_1 \times n_1$ matrix: 
\BEA
\tilde{L}^\gamma_{\epsilon,n} u (x) = \frac{1}{\epsilon} \left( u(x) -  \frac{\textup{Vol}(\mathcal{M}_{\epsilon^\gamma})}{n_1}\sum_{i=1}^{n_1} \tilde{k}^\gamma_\epsilon(x,x_i)u(x_i) \right) . \label{Lrtilde}
\EEA
Here, we used the same notation $\tilde{L}^\gamma_{\epsilon,n}$ as in \eqref{Legamma} for the following reason.
We note that for a fixed $\gamma$, given $X$, a set of $n$ points sampled uniformly i.i.d.~from $\mathcal{M}$, denoting by $n_0$ the number of points in $\mathcal{M} \setminus \mathcal{M}_{\epsilon^\gamma}$ and denoting by $n_1$ the number of points in $\mathcal{M}_{\epsilon^\gamma}$, the unbiased estimators for $\textup{Vol}(\mathcal{M}_{\epsilon^\gamma})$ and $\textup{Vol}(\mathcal{M} \setminus \mathcal{M}_{\epsilon^\gamma})$ are given by $ \frac{n_1}{n}$ and $\frac{n_0}{n}$, respectively. Using such estimates to replace each volume term in \eqref{dgamma} and \eqref{Lrtilde}, one sees that the kernel $\tilde{k}^\gamma_\epsilon = \tilde{k}_\epsilon$  and \eqref{Lrtilde} becomes equivalent to the $n_1 \times n_1$ truncation of  $\tilde{L}_{\epsilon,n}$.

To attain the desired error bound, we define the following intermediate matrix of size $n_1 \times n_1$, denoted by $L^\gamma_{\epsilon,n}$, defined as,  
$$
L^\gamma_{\epsilon,n} u(x) = \frac{1}{\epsilon} \left( u(x) - \frac{1}{n_1} \sum_{i=1}^{n_1}  \textup{Vol}(\mathcal{M}_{\epsilon^\gamma}) \hat{k}_\epsilon (x,x_i) u(x_i) \right) .
$$
We remark that since the definition of $\hat{k}_\epsilon$ involves computing integrals over the manifold, such a matrix is not accessible numerically.
Label the $i$-th eigenvalues of $L^\gamma_\epsilon$ (defined in \eqref{truncated-integral-operator}), $L^\gamma_{\epsilon,n}$, and $\tilde{L}^\gamma_{\epsilon,n}$ (defined in \eqref{Lrtilde}) to be $\lambda^{\gamma,\epsilon}_i$, $\lambda^{\gamma,\epsilon,n}_i$ and $\tilde{\lambda}^{\gamma,\epsilon,n}_i$, respectively. We first have the following lemma, which applies the result from \cite{rosasco2010learning} to those points sampled uniformly from $\mathcal{M}_{\epsilon^\gamma}$.
\begin{lemma}
\label{dirichlet rosasco}
Suppose $n_1$ points $X_{\gamma}^0 :=\{x_1, \dots , x_{n_1}\}$ are sampled uniformly from $\mathcal{M}_{\epsilon^\gamma}$. For any $\epsilon>0$, with probability higher than $1 - \frac{2}{(n_0 + n_1)^2}$, 
$$
\sup_{1 \leq i \leq n_1} |\lambda^{\gamma,\epsilon}_i - \lambda^{\gamma,\epsilon,n}_i| = \mathcal{O}\left( \frac{\sqrt{\log(n_0 + n_1)}}{\epsilon^{d/2+1} \sqrt{n_1}} \right). 
$$

\end{lemma} 
\begin{proof}
Notice that $L_\epsilon^\gamma$ can be rewritten more suggestively as 
\BEA
L^\gamma_\epsilon f(x) = \frac{1}{\epsilon} \left( f(x) - \int_{\mathcal{M}_{\epsilon^\gamma}} \textup{Vol}(\mathcal{M}_{\epsilon^\gamma}) \hat{k}_\epsilon(x,y) f(y)   \chi_{\mathcal{M}_{\epsilon^\gamma}}(y) \frac{dV(y)}{\textup{Vol}(\mathcal{M}_{\epsilon^\gamma})} \right), \notag
\EEA
whence the result follows immediately from Proposition 10 of \cite{rosasco2010learning}, using the same argument as in Lemma \ref{adapted-rbv-supplement}. 
\end{proof}

The next result suggests that eigenvalues of $L^\gamma_{\epsilon,n}$ and $\tilde{L}^\gamma_{\epsilon,n}$ are close (see Appendix~\ref{pfofLemma4.2} for its detailed proof). 
\begin{lemma}
\label{truncated matrix eigenvalues}
Let $\epsilon>0$ and $0<\gamma< 1/2.$ Let $x_i$ be sampled as in Definition~\ref{sampling2domain}. For any $1 \leq i \leq n_1$, with probability higher than $1 - \frac{2n_1}{(n_0 + n_1)^3}$, 
$$
| \lambda_i^{\gamma,\epsilon,n} - \tilde{\lambda}_i^{\gamma,\epsilon,n}| = \mathcal{O}\left(\frac{\sqrt{\log(n_0 + n_1)}}{ \epsilon^{d/2+1} \sqrt{n_1}}, \frac{ \sqrt{\epsilon^\gamma} \sqrt{\log(n_0 + n_1)}}{\epsilon^{d/2+1} \sqrt{n_0}} \right).
$$
\end{lemma} 

Consider a smooth function $\hat{k}^{c,\gamma}_\epsilon : \mathcal{M} \times \mathcal{M} \to \mathbb{R}$, compactly supported in $\mathcal{M}_{\epsilon^\gamma} \times \mathcal{M}_{\epsilon^\gamma} $, such that 
\BEA
\int_{\mathcal{M}_{\epsilon^\gamma}} \int_{\mathcal{M}_{\epsilon^\gamma}} \left| \hat{k}_\epsilon(x,y) - \hat{k}^{c,\gamma}_\epsilon(x,y) \right|^2 dV(x) dV(y) <\epsilon^3,\label{kcminusk_diff}
\EEA
Since $\hat{k}_\epsilon$ is symmetric, it is without loss of generality to assume that $\hat{k}^{c,\gamma}_\epsilon$ is as well (Indeed, otherwise replace $\hat{k}^{c,\gamma}_\epsilon$ by $\frac{1}{2}\left( \hat{k}^{c,\gamma}_\epsilon(x,y) +  \hat{k}^{c,\gamma}_\epsilon(y,x)\right)$, and verify that this still approximates $\hat{k}_\epsilon$). Define an operator $L^{c,\gamma}_\epsilon: L^2(\mathcal{M}) \to L^2(\mathcal{M})$ by 
\BEA
L^{c,\gamma}_\epsilon f (x) = \frac{1}{\epsilon}\left( f(x) - \int_{\mathcal{M}_{\epsilon^\gamma}} \hat{k}^{c,\gamma}_\epsilon(x,y) f(y) dV(y) \right).\notag
\EEA
Since $\hat{k}^{c,\gamma}_\epsilon$ is smooth, it follows that the eigenvalues $\lambda_i^{c,\gamma,\epsilon}$ of $L^{c,\gamma}_\epsilon$ are given by
\BEA
\lambda_i^{c,\gamma,\epsilon} = \min_{S_i} \max_{f \in S_i \|f\|=1} \langle L^{c,\gamma}_\epsilon f , f \rangle_{L^2(\mathcal{M})}, \label{def_lambdac}
\EEA
where the minimum is taken over all $i$ dimensional subspaces of  $C^\infty(\mathcal{M})$. Since $\hat{k}^{c,\gamma}_\epsilon$ is compactly supported in $\mathcal{M}_{\epsilon^\gamma} \times \mathcal{M}_{\epsilon^\gamma}$, it follows that any eigenfunction of $L^{c,\gamma}_\epsilon$ vanishes on $\mathcal{M}\setminus\mathcal{M}_{\epsilon^\gamma}$. 
Hence, the above minimum can be taken over $C_0^\infty(\mathcal{M})$, smooth functions vanishing on the boundary. We have the following result on the eigenvalues of $L^{c,\gamma}_\epsilon$ and $L^\gamma_\epsilon$.
\begin{lemma}
\label{perturbation lemma} For any $\epsilon>0$, $0 < \gamma < 1/2$, let $\hat{k}^{c,\gamma}_\epsilon$ be defined to satisfy \eqref{kcminusk_diff}, then the difference between the $i$-th eigenvalues $\lambda^{c,\gamma,\epsilon}_i$ and $\lambda^{\gamma,\epsilon}_i$ of $L^{c,\gamma}_\epsilon$ and $L^\gamma_\epsilon$, respectively, are given by: 
\BEA
 \sup_i \left| \lambda^{c,\gamma,\epsilon}_i - \lambda_i^{\gamma,\epsilon} \right| \leq \epsilon^{1/2}.\notag
\EEA
\end{lemma}
\begin{proof}
 This follows immediately from Theorem 5 in \cite{rosasco2010learning} (Kato perturbation result).  Particularly,
 \BEA
  \sup_i \left| \lambda^{c,\gamma,\epsilon}_i - \lambda_i^{\gamma,\epsilon} \right| &\leq&  \| L_\epsilon^{c,\gamma} - L_\epsilon^\gamma\| = \sup_{f\in L^2(\mathcal{M}_{\epsilon^\gamma}),\|f\|_{L^2}=1}\|(L_\epsilon^{c,\gamma} - L^\gamma_\epsilon)f\|_{L^2(\mathcal{M}_{\epsilon^\gamma})} \notag \\ &\leq& \sup_{\|f\|=1}\frac{1}{\epsilon} \int_{\mathcal{M}_{\epsilon^\gamma}}\int_{\mathcal{M}_{\epsilon^\gamma}} \left| (\hat{k}_\epsilon(x,y) - \hat{k}_\epsilon^{c,\gamma}(x,y)) f(y) \right| dV(y) dV(x) \notag \\
  &\leq & \frac{1}{\epsilon} \left( \int_{\mathcal{M}_{\epsilon^\gamma}} \int_{\mathcal{M}_{\epsilon^\gamma}} \left| \hat{k}_\epsilon(x,y) - \hat{k}^{c,\gamma}_\epsilon(x,y) \right|^2 dV(x) dV(y)\right)^{1/2}, \notag 
 \EEA
 where we have used Cauchy-Schwarz in the last line. The main result follows immediately from the assumption in Equation \eqref{kcminusk_diff}.
\end{proof}
The final lemma needed before proving a spectral convergence result is to bound an error which is introduced by truncating an integral to points sufficiently far away from the boundary. 
\begin{lemma}
\label{small integral}
Let $f \in C_0^\infty(\mathcal{M})$, and $0< \gamma< 1/2$. Then 
$$
\left\langle \int_{\mathcal{M} \setminus \mathcal{M}_{\epsilon^\gamma}} \epsilon^{-d/2} k_\epsilon(\cdot,y) f(y) dV(y), f  \right\rangle_{L^2(\mathcal{M})} = \mathcal{O}\left(\epsilon^{3 \gamma}\right).
$$
\end{lemma}

\begin{proof}
The proof is a simple application of Fubini-Tonelli. We note that for any $y \in \mathcal{M} \setminus \mathcal{M}_{\epsilon^\gamma}$, since $f$ vanishes on the boundary and is smooth, and hence Lipshitz, it follows that $|f(y)| \leq L d_g (x_0, y) \leq L\epsilon^\gamma$, where $x_0 \in \partial\mathcal{M}$ is the closest point on the boundary to $y$. 
It follows by Tonelli's theorem that the order of integration can be exchanged, whence it's clear that 
\BEA
\left\langle \int_{\mathcal{M} \setminus \mathcal{M}_{\epsilon^\gamma}} \epsilon^{-d/2}k_\epsilon(\cdot,y) f(y) dV(y), f  \right\rangle_{L^2(\mathcal{M})}    =\int_{\mathcal{M} \setminus \mathcal{M}_{\epsilon^\gamma} } f(x) \int_\mathcal{M} \epsilon^{-d/2} k_\epsilon (x,y)f(y) dV(y) dV(x). \notag
\EEA
Using the expansion in Theorem 4.6 of \cite{vaughn2024diffusion}, 
\BEA
\int_\mathcal{M} \epsilon^{-d/2} k_\epsilon (x,y)f(y) dV(y) = m_0^\partial (x) f(x) 
+ \epsilon^{1/2} \underbrace{m_1^\partial(x)\left(n_x\cdot \nabla f(x)+ \frac{d-1}{2}H(x)f(x)\right)}_{=g(x)} + \mathcal{O}(\epsilon), \notag 
\EEA
where the constant in the big-oh notation depends on $f$.
Hence,
\BEA
\Bigg| \left\langle \int_{\mathcal{M} \setminus \mathcal{M}_{\epsilon^\gamma}} \epsilon^{-d/2} k_\epsilon(\cdot,y) f(y) dV(y), f  \right\rangle_{L^2(\mathcal{M})} \Bigg|  
\leq \int_{\mathcal{M} \setminus \mathcal{M}_{\epsilon^\gamma}} |m_0^\partial (x)| |f(x)|^2 dV(x)
+ \sqrt{\epsilon}   \int_{\mathcal{M} \setminus \mathcal{M}_{\epsilon^\gamma}} |f(x)g(x)| dV(x). \notag
\EEA
Since $\textup{Vol}(\mathcal{M} \setminus \mathcal{M}_{\epsilon^\gamma}) =\mathcal{O}(\epsilon^{\gamma})$ (see Proposition~\ref{dependence on r}), it follows that 
\BEA
\Bigg| \left\langle \int_{\mathcal{M} \setminus \mathcal{M}_{\epsilon^\gamma}} \epsilon^{-d/2} k_\epsilon(\cdot,y) f(y) dV(y), f  \right\rangle_{L^2(\mathcal{M})} \Bigg| = \mathcal{O}(\epsilon^{3 \gamma}). \notag
\EEA
\end{proof} 

We are now ready to state the following spectral bound for data sampled according to  Definition~\ref{sampling2domain}, which proof uses the four lemmas above.

\begin{lemma}\label{spectralconvdiri-supplement} Let $\mathcal{M}$ be a manifold with boundary, and let $\lambda_i$ denote the $i$-th eigenvalue of the Dirichlet Laplacian. Suppose that $n_0$ and $n_1$ points are uniformly sampled based on Definition~\ref{sampling2domain}. Let $0 < \gamma < 1/2$ be fixed. For any $\epsilon>0$ and $i<n_1$, with probability greater than $1 - \frac{4n_1}{(n_0 + n_1)^3} - \frac{4}{(n_0 + n_1)^2}$, we have that 
 \begin{equation*}
  |\lambda_{i} - \tilde{\lambda}^{\gamma,\epsilon,n}_{i}| = \mathcal{O}\left( \epsilon^{1/2}, \epsilon^{3\gamma - 1}, \frac{ \log(n_0 + n_1)^{1/2}}{\epsilon^{d/2 + 1}\sqrt{n_1}},  \frac{\sqrt{\epsilon^\gamma} \log(n_0 + n_1)^{1/2}}{\epsilon^{d/2 + 1} \sqrt{n_0}} \right)
\end{equation*}
as $\epsilon\to 0$ after $n_1, n_0 \to \infty$. Here, $\tilde{\lambda}^{\gamma,\epsilon,n}_{i}$ denotes the eigenvalues of $\tilde{L}_{\epsilon,n}^\gamma$.
\end{lemma}

\begin{proof}
Let $f \in C_0^{\infty}(\mathcal{M})$. Notice that we have 
\BEA
     \| \nabla f  \|^2_{L^2(\mathcal{M})} -  \tilde{\lambda}^{\gamma,\epsilon,n}_{i} &=& \| \nabla f  \|^2_{L^2(\mathcal{M})}  -  \langle L_\epsilon f, f \rangle_{L^2(\mathcal{M})} +   \langle L_\epsilon f , f  \rangle_{L^2(\mathcal{M})} - \langle L^{c,\gamma}_\epsilon f, f \rangle_{L^2(\mathcal{M})} + \langle L^{c,\gamma}_\epsilon f, f \rangle_{L^2(\mathcal{M})}  -   \tilde{\lambda}^{\gamma,\epsilon,n}_{i}. \notag
\EEA
Fix any subspace $S \subseteq C_0^{\infty}(\mathcal{M})$ of dimension $i$. Maximizing both sides over all $f \in S \setminus \{0\}$ with $\|f\|_{L^2(\mathcal{M})} = 1$, the above equality still holds. Using subadditivity of the maximum, we obtain
\BEA
     \max_{f \in S, \|f\|=1} \| \nabla f  \|^2_{L^2(\mathcal{M})} -  \tilde{\lambda}^{\gamma,\epsilon,n}_{i} &\leq&  \max_{f \in S, \|f\| = 1}\left( \| \nabla f  \|^2_{L^2(\mathcal{M})} -  \langle L_\epsilon f, f \rangle_{L^2(\mathcal{M})} \right) 
    \notag \\ &+& \max_{f \in S, \|f\|=1} \left( \langle L_\epsilon f , f  \rangle_{L^2(\mathcal{M})} - \langle L^{c,\gamma}_\epsilon f, f \rangle_{L^2(\mathcal{M})} \right) \notag \\
     &+& \max_{f \in  S, \|f\| = 1} \langle L^{c,\gamma}_\epsilon f, f \rangle_{L^2(\mathcal{M})} -  \tilde{\lambda}^{\gamma,\epsilon,n}_{i}.\notag
\EEA
Since this relation is valid for all $i-$dimensional  subspaces $S$, choosing the subspace $S' \subset C^\infty_0(\mathcal{M})$ for which the term $\max_{f \in S, \|f\|=1} \langle L^{c,\gamma}_\epsilon f , f  \rangle_{L^2(\mathcal{M})}$ is at a minimum, the inequality again is also valid. Using Equation \eqref{def_lambdac}, we have
\BEA
     \lambda_i -  \tilde{\lambda}^{\gamma,\epsilon,n}_{i} &\leq&  \max_{f \in S', \|f\| = 1}\left( \| \nabla f  \|^2_{L^2(\mathcal{M})} -  \langle L_\epsilon f, f \rangle_{L^2(\mathcal{M})} \right) \notag \\ &+& \max_{f \in S', \|f\|=1} \left( \langle L_\epsilon f , f  \rangle_{L^2(\mathcal{M})} - \langle L^{c,\gamma}_\epsilon f, f \rangle_{L^2(\mathcal{M})} \right) + \lambda^{c,\gamma,\epsilon}_i -   \tilde{\lambda}^{\gamma,\epsilon,n}_{i}.\notag
\EEA
 By the weak convergence result, the first term is $\mathcal{O}\left( \epsilon^{1/2} \right)$. The second term can be simplified as 
 \BEA
 \frac{1}{\epsilon}\Bigg\langle  \int_{\mathcal{M}} \hat{k}_\epsilon (\cdot, y) f(y) dV(y) - \int_{\mathcal{M}_{\epsilon^\gamma}} \hat{k}_\epsilon^{c,\gamma} (\cdot, y) f(y) dV(y), f \Bigg\rangle_{L^2(\mathcal{M})} 
 \notag \hspace{6cm}\\  =\frac{1}{\epsilon}\Bigg\langle  \int_{\mathcal{M} \setminus \mathcal{M}_{\epsilon^\gamma}} \hat{k}_\epsilon (\cdot, y) f(y) dV(y) 
+  \int_{\mathcal{M}_{\epsilon^\gamma}} (\hat{k}_\epsilon - \hat{k}_\epsilon^{c,\gamma}) (\cdot, y) f(y) dV(y), f \Bigg\rangle_{L^2(\mathcal{M})} 
 \hspace{0cm}= \mathcal{O}\left( \epsilon^{3 \gamma -1} \right) +  \mathcal{O}\left(\epsilon^{\frac{1}{2}}\right),\notag 
\EEA
where the first bound is due to Lemma~\ref{small integral} and the second bound is due to the construction of $L^{c,\gamma}_\epsilon$ which satisfies $\|L_\epsilon^{c,\gamma} - L_\epsilon^\gamma\|_{HS} = O(\epsilon^{1/2})$.
 
For the third term, we split
 $$
\lambda^{c,\gamma,\epsilon}_i -  \tilde{\lambda}^{\gamma,\epsilon, n}_{i} = (\lambda^{c,\gamma,\epsilon}_i - \lambda^{\gamma,\epsilon}_{i})+(\lambda^{\gamma,\epsilon}_{i} -  \lambda^{\gamma,\epsilon,n}_{i}) + (\lambda^{\gamma,\epsilon,n}_{i} - \tilde{\lambda}^{\gamma,\epsilon,n}_{i}), 
 $$
 where $\lambda_i^{\gamma,\epsilon}$ is the $i$-th eigenvalue of the integral operator $L^\gamma_\epsilon$ as defined in Equation \eqref{truncated-integral-operator}. By Lemma  \ref{perturbation lemma}, 
 $
 |\lambda^{c,\gamma,\epsilon}_i - \lambda^{\gamma,\epsilon}_i| = \mathcal{O}(\epsilon^{1/2}).
 $
 From Lemma \ref{dirichlet rosasco}, with confidence greater than $1-\frac{2}{(n_0 + n_1)^2}$, the separation of eigenvalues of $L^\gamma_\epsilon$ and $L^\gamma_{\epsilon,n}$ is bounded above by $\mathcal{O} \left(\frac{\log(n_0 + n_1)}{\epsilon^{d/2 + 1}\sqrt{n_1}}\right)$. The last term is bounded using Lemma~\ref{truncated matrix eigenvalues}.  
 The upper bound on $\tilde{\lambda}^{\gamma,\epsilon,n}_i - \lambda_i$ is the same, and the proof is similar. This completes the proof. 
\end{proof}
 

With this result, we now transfer it to the uniform sampling setup.

\begin{proof}[Proof of Theorem~\ref{spectralconvdiri uniform}]
By Proposition~\ref{dependence on r}, the volume of $\mathcal{M} \setminus \mathcal{M}_{\epsilon^\gamma}$ scales linearly with $\epsilon^\gamma$, $\textup{Vol}(\mathcal{M} \setminus \mathcal{M}_r) \sim c \epsilon^\gamma$. Since $n_0$ is a binomial random variable with mean $c\cdot \epsilon^\gamma \cdot n$, by Hoeffding, with probability higher than $1 - 2e^{-t^2/n}$, 
$
|n_0 - cn \epsilon^\gamma| < t. 
$
Choosing $t = \sqrt{n \beta}$, we have that with probability higher than $1 - 2e^{-\beta}$, 
$
|n_0 - cn \epsilon^\gamma | < \sqrt{n \beta}.
$
In particular, 
$$
cn \epsilon^\gamma -\sqrt{n \beta} \leq n_0 \leq \sqrt{n \beta} + cn \epsilon^\gamma, \,\, \textup{ and } \,\, (1-c \epsilon^\gamma )n - \sqrt{n \beta} \leq n_1 \leq \sqrt{ n \beta} + (1-c \epsilon^\gamma )n. 
$$ 
Since $n_0$ points are sampled uniformly from $\mathcal{M} \setminus \mathcal{M}_{\epsilon^\gamma}$, and $n_1$ points are sampled uniformly from $\mathcal{M}_{\epsilon^\gamma}$, we can apply {\color{black}Lemma~\ref{spectralconvdiri-supplement}}, replacing $n_1$ and $n_0$ within the accuracy bounds with lower bounds for $n_0$ and $n_1$, and replacing $n_1$ and $n_0$ in the probability bounds with the upper bounds for $n_0$ and $n_1$. That is, with total probability higher than $1 - 2e^{-\beta} - \frac{4(\sqrt{n \beta} + (1-c \epsilon^\gamma )n)}{n^3} - \frac{4}{n^2}$,
 \begin{equation*}
  |\lambda_{i} - \tilde{\lambda}^{\gamma,\epsilon,n}_{i}| = \mathcal{O}\left( \epsilon^{1/2}, \epsilon^{3 \gamma - 1},  \frac{\log(n)^{1/2}}{\epsilon^{d/2 + 1}\sqrt{(1-c\epsilon^\gamma)n - \sqrt{n \beta}}},  \frac{\sqrt{\epsilon^\gamma}\log(n)^{1/2}}{\epsilon^{d/2 + 1} \sqrt{cn \epsilon^\gamma - \sqrt{n \beta}}}  \right).
\end{equation*}
Choosing $\beta = \log(n)^2$, it is clear from the denominator in the final term that we must require
$
\epsilon^\gamma \gg \frac{ \log(n)}{c \sqrt{n}}. 
$
The denominator of the final term simplifies to 
$$
\sqrt{cn \epsilon^\gamma - \sqrt{n \beta}} = \sqrt{cn \epsilon^\gamma - n^{1/2}  \log(n) }. 
$$
Since  $\epsilon^\gamma \gg \frac{\log(n)}{c n^{1/2}}$, this term behaves as $\sqrt{cn \epsilon^\gamma}$. Hence, the final term in the rate can be simplified to 
$$
\mathcal{O}\left(\frac{ \sqrt{\epsilon^\gamma} \log(n)^{1/2}}{\epsilon^{d/2+1} \sqrt{n \epsilon^\gamma}}\right) = \mathcal{O}\left(\frac{\log(n)^{1/2}}{\epsilon^{d/2+1} \sqrt{n}}\right). 
$$
Similarly, the denominator in the third term simplifies to 
$$
\epsilon^{d/2+1} \sqrt{n - cn \epsilon^\gamma - \sqrt{n}\log(n) } \gg \epsilon^{d/2+1} \sqrt{ n(1- 2 c\epsilon^\gamma) }.
$$
For $\epsilon^\gamma \ll 1$, this term behaves as $\epsilon^{d/2+1} \sqrt{n}$. Hence, with probability higher than $1 - \frac{6}{n^2} - \frac{4(2 \sqrt{n} \log(n) + 2n)}{n^3}$, we have \eqref{spectralconvdirirate} and the proof is completed.
\end{proof}
\begin{remark}
\label{valid parameter regimes} 
From the rate in \eqref{spectralconvdirirate}, the dominant error is $\epsilon^{3\gamma-1}$. This follows since $\gamma<1/2$ implies that $\epsilon^{1/2} < \epsilon^{3\gamma-1}$. This in turn implies that the method converges when $3\gamma-1>0$ or $\gamma>1/3$, which gives us a guaranteed convergence when
\[
n^{-1/2}\log(n) \ll \epsilon^{1/2} < \epsilon^\gamma < \epsilon^{1/3},
\] 
where the condition that $\epsilon^{1/2} \gg \frac{\log(n)}{\sqrt{n}}$ comes from the hypothesis of  Theorem~\ref{spectralconvdiri uniform}.
If we choose $\gamma \to 1/2$, then we have balanced the first two error rates and obtain an effective rate of $\epsilon^{1/2}$.
The third term gives a lower bound for how fast $\epsilon \to 0$, while the first term requires $\epsilon \ll 1$. In particular, 
$$
\left(n^{-1}\log(n)\right)^{\frac{1}{d+2}} \ll \epsilon \ll 1.
$$
Balancing the first and the third terms, we obtain
\[
\epsilon (n) \propto  \left(n^{-1}\log(n)\right)^{\frac{1}{d+3}}  > \left(n^{-1}\log(n)\right)^{\frac{1}{d+2}} \gg \left(n^{-1}\log(n)\right)^{\frac{2}{d}},
\]
which is larger than the connectivity threshold (as we noted in Remark~\ref{connectivity threshold}).
Together with the second term, if we take $\gamma$ arbitrarily close to $1/2$, we achieve the theoretical convergence rate,
\[
 |\lambda_{i} - \tilde{\lambda}^{\gamma,\epsilon,n}_{i}| = \mathcal{O}\left( \left(n^{-1} \log(n)\right)^{\frac{1}{2d + 6}} \right).
\]
\end{remark}

\begin{remark}
\label{truncation remark} The above result formalizes the observation made by others \cite{tg:2019},\cite{hs:2022} that truncating the matrix obtained from Diffusion Maps algorithm to the interior yields spectral convergence of the Dirichlet Laplacian. We numerically verify this further for a few examples in Section \ref{sec6}, where we will set $\gamma = 1/2$. One can view truncating the graph Laplacian near the boundary as forcing $0$ boundary conditions. With this study, it is clear that the truncated graph Laplacian with $\gamma=1/2$ converges to the Dirichlet Laplacian. 
\end{remark}

\section{Numerical Results for Dirichlet Boundary Conditions}\label{sec6} \noindent
Since numerical results for closed manifolds and Neumann eigenvalue problems are well documented \cite{harlim2018data,jiang2023ghost}, we only report results for the Dirichlet eigenvalue problems.
Particularly, we numerically verify the algorithm suggested by Theorem \ref{spectralconvdiri uniform}. Namely, we show for two fundamental examples of manifolds with boundary that the Diffusion Maps algorithm, when modified by restricting the discretization matrix to points on the interior whose distance from the boundary satisfies the assumption in Theorem \ref{spectralconvdiri uniform}, converges to the Dirichlet Laplacian. For convenience, we refer to this method for approximating the Dirichlet Laplacian as the Truncated Graph Laplacian (TGL). We note that the validity of TGL, demonstrated below, emphasizes the growing importance of accurately estimating the distance of a data point to the boundary, since in the traditional manifold learning set up, the structure of the manifold is unknown. Numerical methods for such estimations are formally discussed in \cite{vaughn2024diffusion}. However, in the examples tested below, distance from the boundary is known a priori, and therefore such methods are not employed in this paper. In practice, we treat the points in $ X \cap \mathcal{M}_{\epsilon^\gamma}$ as those data points whose distance from the boundary is greater than $\epsilon^\gamma$. Based on Remark~\ref{valid parameter regimes}, we perform the truncation by setting $\gamma = 1/2$ unless stated. {\color{black}In the following subsection, we will also show how the estimate varies as a function of $\gamma$.} 
 
 \subsection{Dirichlet Laplacian on a Semi-Circle} \noindent 
In this example, let $\mathcal{M}$ denote the $1$-dimensional semicircle in $\mathbb{R}^2$, with the embedding  $\{ (\cos(\theta), \sin(\theta)): 0 \leq \theta \leq \pi\}.$  
We consider the Dirichlet eigenvalue problem. 
Recall that the Riemannian metric on a semi-circle with embedding $\{ (\cos(\theta), \sin(\theta)): 0 \leq \theta \leq \pi\}$ is simply $g \equiv 1$. It is then easily checked that the above eigenvalue problem has exact solutions $
\lambda_k = k^2, \textrm{ and } f_k(\theta) = \sin(k\theta) \textrm{ for }k=1,2,3,\dots.$ 

The eigenvalues and eigenfunctions of Dirichlet Laplacian for this example are estimated using the eigenvalues and eigenfunctions of a $K$-nearest neighbors version of the matrix
$\tilde{L}_{\epsilon,n}^{\color{black}\gamma}$ with a Gaussian kernel, using $K = \sqrt{n}$. Note that for the Gaussian kernel, we have the following explicit formula for the normalization $m^\partial_0$:
$$
m_0^\partial (x) = \frac{\pi^\frac{d}{2}}{2}\left( 1 + \textrm{erf}(b_x/\epsilon) \right)
$$
where $b_x$ denotes distance to the boundary (as shown in \cite{vaughn2024diffusion}). This normalization is especially important when the distribution $q$ is nonuniform, and is involved in the discretization in Equation \eqref{mat_L_qen}. In what follows, however, we numerically observe that this normalization outperforms the constant normalization even for well-sampled, uniformly distributed data. For well-sampled data, the matrix is constructed from a grid of data points $\left\{(\cos\left( \frac{\pi i}{n}\right), \sin\left(\frac{\pi i}{n} \right)) \right\}_{i=1}^n$, and the parameter $\epsilon$ was automatically tuned using the method outlined in Chapter $6$ of \cite{harlim2018data}. For randomly generated data, the parameter $\epsilon$ was hand tuned for higher values of $n$ to minimize the error of the eigenvalues. For lower values of $n$, the $\epsilon$ was specified by a linear fitting of $\log(\epsilon)$ on $\log(n)$ on the hand-tuned $\epsilon$ of high values of $n$. The numerical results for fixed $n = 10,000$ are shown in Figure \ref{fig1}. 

In Figure \ref{fig1}(a), we compare the absolute error of the first $10$ eigenvalues using the constant normalization $m_0$ (red curve with circle), which we refer to as no normalization in the discussion below, and using the normalization $m^\partial_0$ (blue curve with circle) depending on distance from the boundary. In Figure \ref{fig1}(b) we show the corresponding root-mean-square errors of the eigenfunctions for these cases, which is precisely the $L^2(\mu_n)$ norm of the difference $\tilde{f}_k - R_nf_k$, where $f_k$ denotes the analytic eigenfunction, and $\tilde{f}_k$ denotes the approximate eigenvector of $\widetilde{L}_{\epsilon,n}$.  These figures demonstrate that for well-sampled data, the $m_0^\partial$ normalization slightly outperforms that without normalization. In the same panels, we also show estimates of 10 uniformly sampled random realizations (black curves) with the $m_0^\partial$ normalization. These results suggest that using this normalization the spectrum can be accurately approximated for low eigenvalues. For random data, the accuracy is between $10^{-2}$ and $10^{-1}$.
\begin{figure}[htbp]
{\scriptsize \centering
\begin{tabular}{cc}
\normalsize (a)  & \normalsize (b) \\
\includegraphics[width=.45\textwidth]{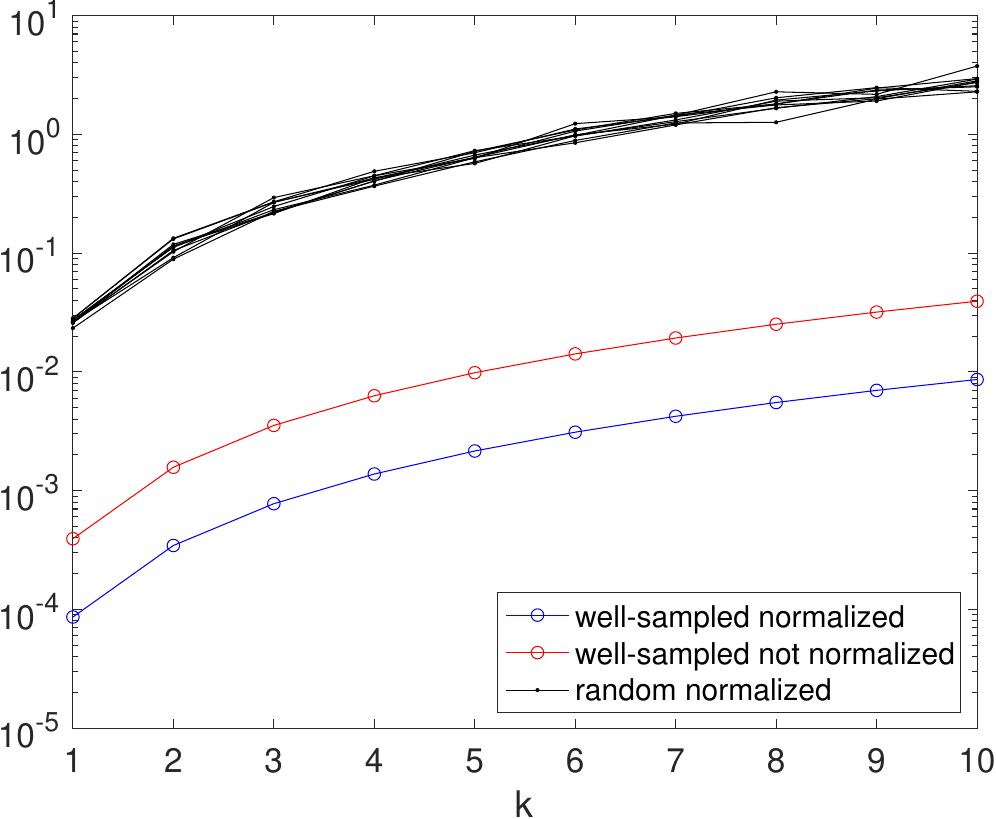} &
\includegraphics[width=.45\textwidth]{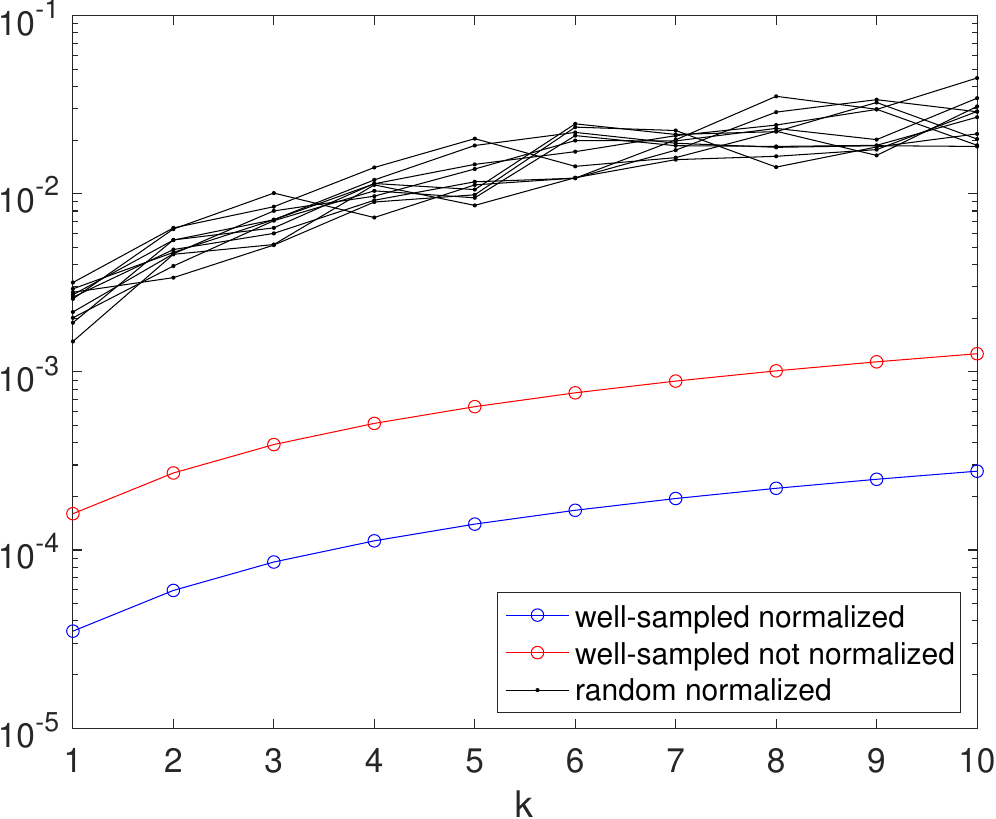}\\
\normalsize (c)  & \normalsize (d) \\
\includegraphics[width=.45\textwidth]{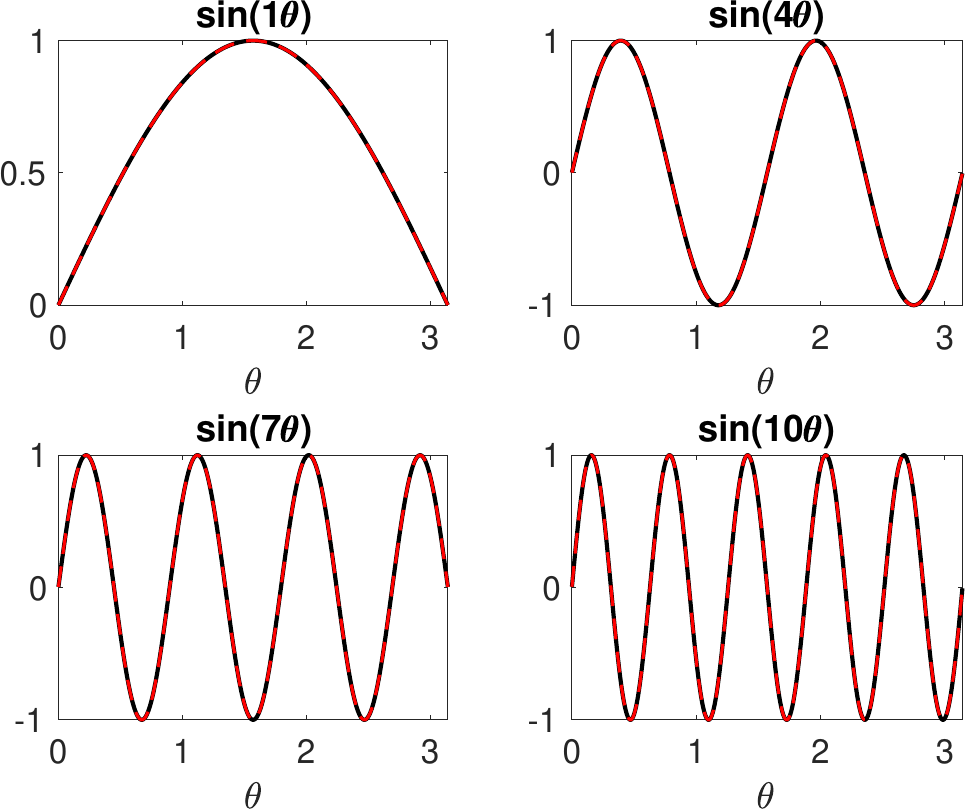} &
\includegraphics[width=.45\textwidth]{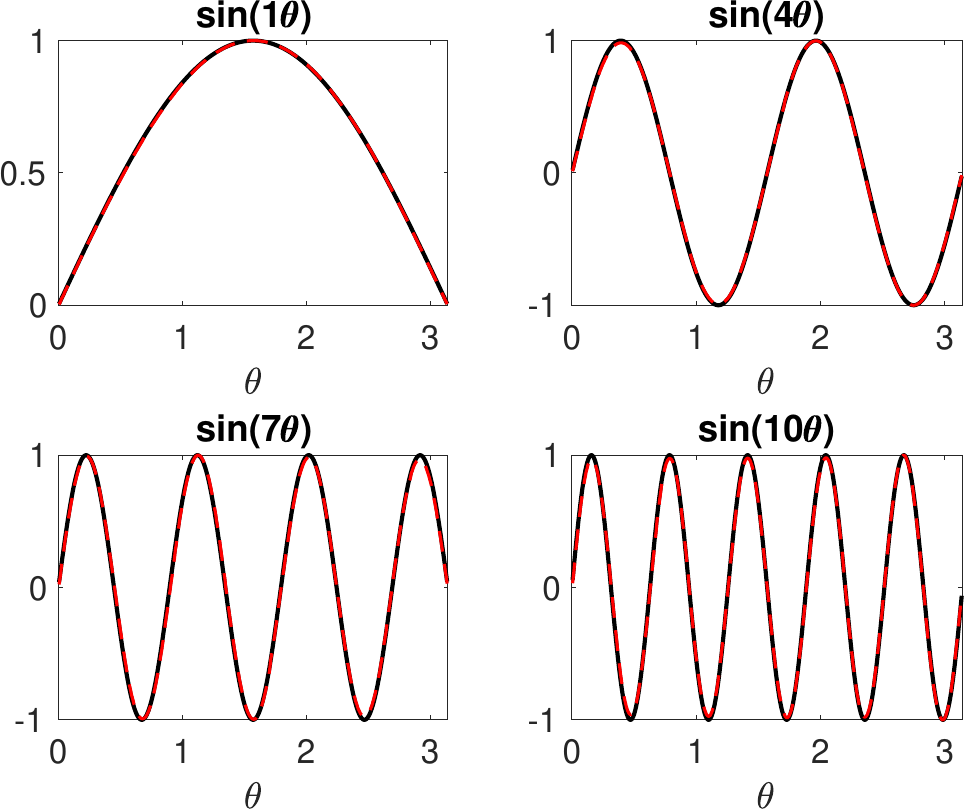}
\end{tabular} 
} 
\caption{Semi-Circle Example with $n=10000$: (a) Absolute errors of eigenvalues as functions of mode $k$; (b) Root-mean-square errors of eigenvectors as functions of mode $k$; 
(c) Estimated eigenvectors from well-sampled data, normalized with $m_o^\partial$; (d) Estimated eigenvectors from random data, normalized with $m_o^\partial$. In panels (a) and (b), we show estimates from 10 random realizations (black curves).}
\label{fig1}
\end{figure}
Based on this result, in the remainder of this paper, all simulations use the normalization $m_o^\partial$. Figure \ref{fig1}(c) demonstrates the accuracy of TGL in approximating eigenfunctions for uniformly distributed data which is well-sampled, while Figure \ref{fig1}(d) shows the same estimates with random data. We see that even for random data, the approximate eigenfunctions are indistinguishable from the analytic eigenfunctions. 
 
Figure \ref{fig2} demonstrates convergence as a function of $n$ for both well-sampled uniform data (\ref{fig2}(a)) and random uniformly distributed data (\ref{fig2}(b)). To be consistent with the error metrics in Theorems~\ref{spectralconvdiri uniform} and \ref{conveigvec-dirichlet-supplement}, we numerically compute,
\BEA
\begin{aligned}
\textup{Error of Eigenvalues}&:= \frac{1}{M} \sum_{i=1}^M \frac{|\lambda_i - \tilde{\lambda}^{\gamma,\epsilon,n}_{i}|}{|\lambda_i|},  \\
\textup{Error of Eigenvectors}&:= \frac{1}{M} \sum_{i=1}^M \|R_nf_i -f_i^{\gamma,\epsilon,n} \|^2_{L^2(\mu_n)}, 
\end{aligned}\label{empiricalerror}
\EEA
where $\{\lambda_i, f_i\}_{i=1}^{M}$ and {\color{black}$\{\tilde{\lambda}_i^{\gamma,\epsilon,n}, f_i^{\gamma,\epsilon,n}\}_{i=1}^{M}$} denote the leading $M$ eigensolutions of the Laplace-Beltrami operator $\Delta$ and the matrix ${\color{black}\tilde{L}^{\gamma}_{\epsilon,n}}$, respectively. {\color{black}Again, $R_nf_i = (f_i(x_1),\ldots,f_i(x_n))$ denotes the restriction of $f_i$ on the training data set $X=\{x_1,\ldots, x_n\}$.} For non-uniform data {\color{black}with sampling density $q$}, we compute the errors for the leading $M$ eigensolutions {\color{black}$\{\tilde{\lambda}_i^{q,\gamma,\epsilon,n}, f_i^{q,\gamma,\epsilon,n}\}_{i=1}^{M}$ of the matrix $\tilde{L}^{\gamma}_{q,\epsilon,n}$, which is the $n_1\times n_1$ submatrix of $\tilde{L}_{q,\epsilon,n}$ defined in \eqref{mat_L_qen}. Here, the components of the matrix $\tilde{L}^{\gamma}_{q,\epsilon,n}$ are the components of $\tilde{L}_{q,\epsilon,n}$ associated to data points that are $\epsilon^\gamma$ away from the boundary. Its construction is analogous to how the matrix $\tilde{L}_{\epsilon,n}^\gamma$ in \eqref{Legamma} is defined as a submatrix of  $\tilde{L}_{\epsilon,n}$.}  

In this numerical experiment, we will verify errors correspond to the leading $M=10$ estimated eigensolutions. For random uniformly distributed data, $10$ trials were performed for each value of $n$. In both cases, we still observe convergence faster than {\color{black}the predicted rate of $n^{-\frac{1}{8}}$ (resp. $n^{-\frac{1}{4}}$) for the eigenvalues (resp. eigenvectors, see Remark~\ref{remonDir})} in this one-dimensional example. {\color{black}The fact that the $L^2$ square error rate for the eigenvectors is two times faster than the error in the eigenvalues validates theoretical findings noted in Remark~\ref{remonDir}. What is surprising is that such a consistency in the error rates (between the $L^2$ error in the eigenvectors and the error in the eigenvalues) is valid even when the $L^2$ metric used in quantifying the eigenvector error in \eqref{empiricalerror} is computed over the entire data set. In contrast, the $L^2$ metric used in theory is only defined over the data points that are $\epsilon^\gamma$ away from the boundary (see Theorem~\ref{conveigvec-dirichlet-supplement}).} {\color{black}It is worth noting that one can verify these slower predicted rates if one sets the kernel bandwidth parameter $\epsilon \sim n^{-1/4}$ (results are not shown).} Figure \ref{fig3} shows the errors in Equation \eqref{empiricalerror} for non-uniformly distributed data. The non-uniformly sampled data used to generate Figure \ref{fig3}(a) (resp. \ref{fig3}(b)) was distributed in accordance to $\pi  \cos(x)$, where $x$ are well-sampled (resp. randomly sampled) data uniformly in the interval $(0, \pi/2)$. With this distribution, there are comparably more data points sampled near the boundary. Again, for random data, $10$ trials were performed for each value of $n$. As expected, the estimations from well-sampled data is more accurate compared to those from randomly sampled data. Nevertheless, in either case we observe convergence as $n \to \infty$ at a rate faster than the rate predicted by the proofs of the main theorems for non-uniformly sampled data.
\begin{figure}[htbp]
{\scriptsize \centering
\begin{tabular}{cc}
\normalsize (a)  & \normalsize (b) \\
\includegraphics[width=.45\textwidth]{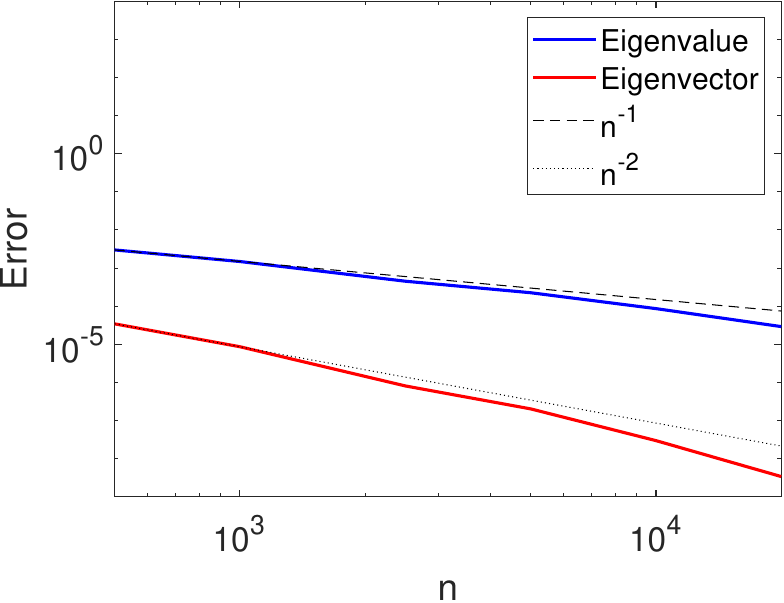} &
\includegraphics[width=.45\textwidth]{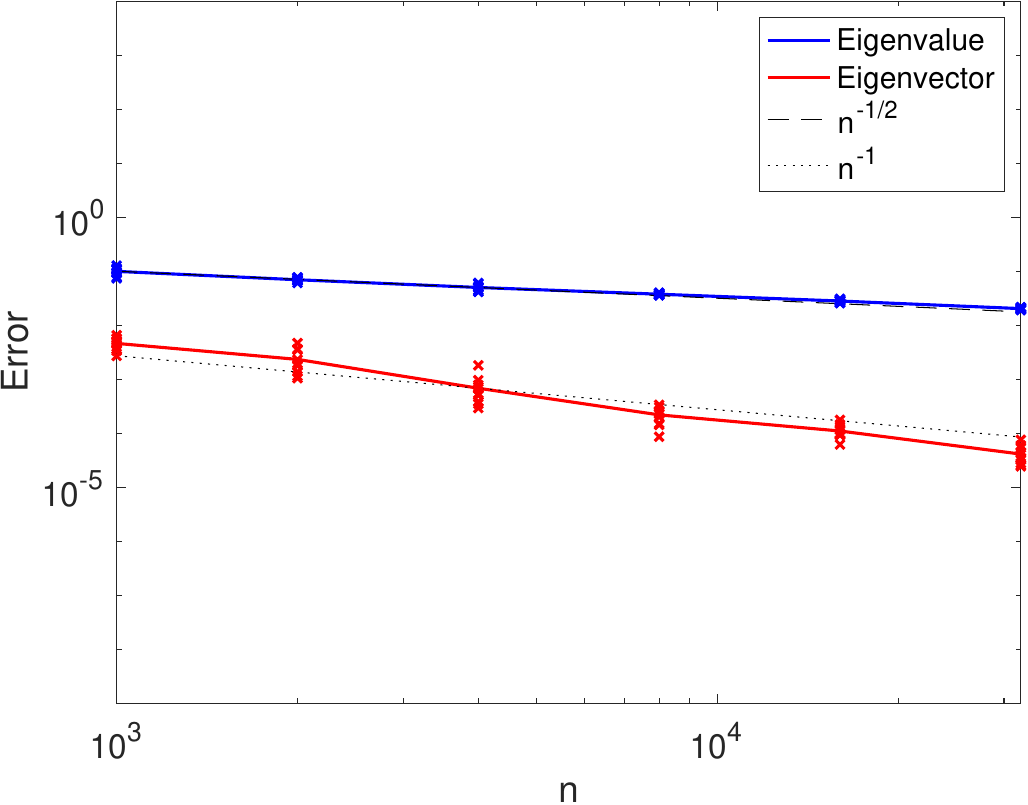}
\end{tabular} }
\caption{Semi-Circle Example for estimation with $m_o^\partial$ normalization, uniform sampling distribution. Mean of relative error (eigenvalues) and mean of mean-square-error (eigenvectors) as defined in Equation \eqref{empiricalerror} over the first $M=10$ modes as functions of $n$: (a) Well-sampled data; (b) Random data.}
\label{fig2}
\end{figure}
\begin{figure}[htbp]
{\scriptsize \centering
\begin{tabular}{cc}
\normalsize (a)  & \normalsize (b) \\
\includegraphics[width=.45\textwidth]{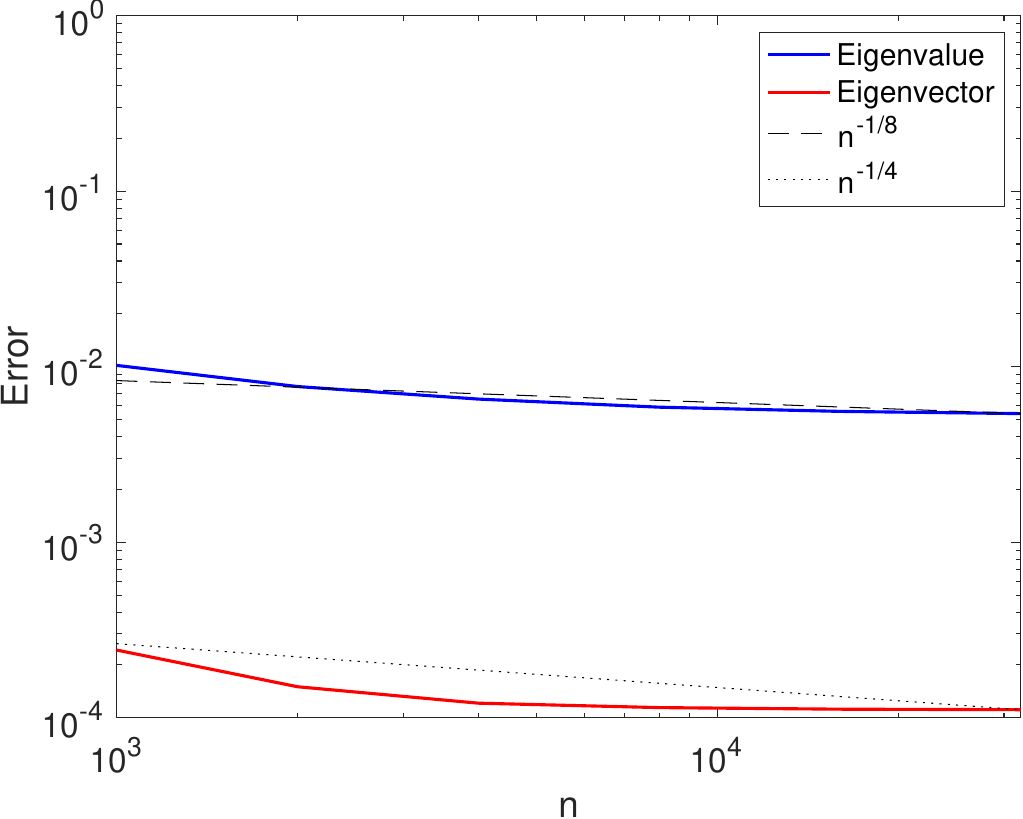} &
\includegraphics[width=.45\textwidth]{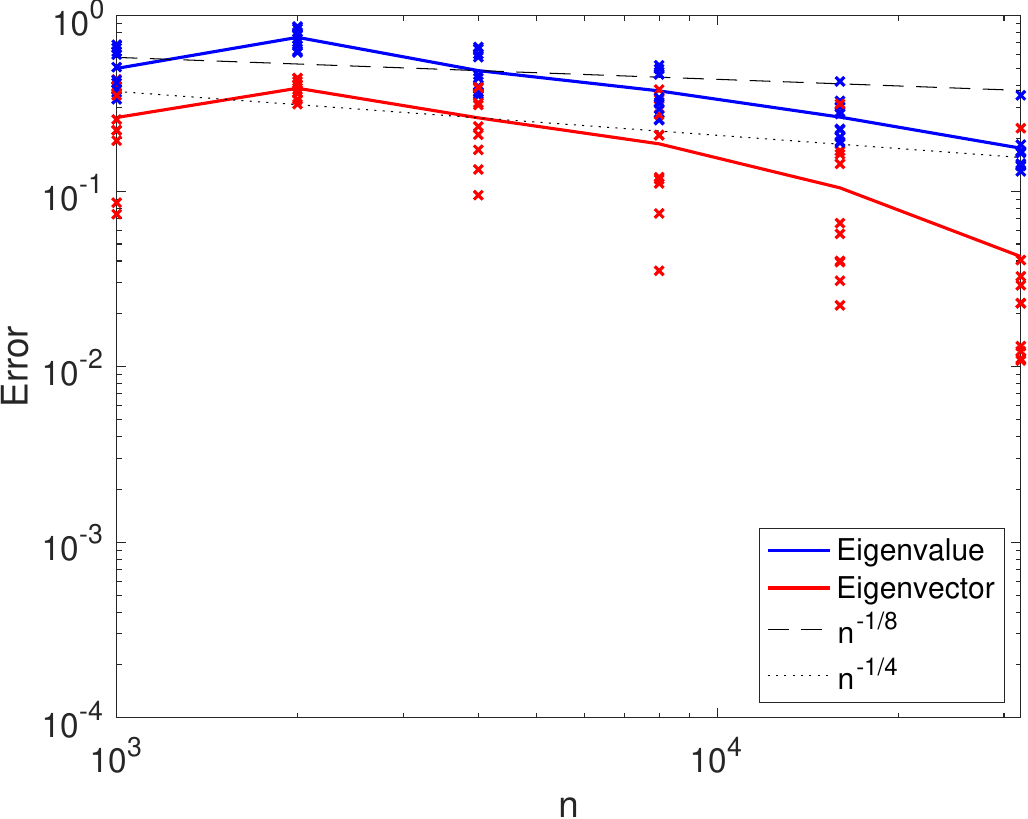}
\end{tabular}}
\caption{Semi-Circle Example for estimation with $m_o^\partial$ normalization, nonuniform sampling distribution. Mean of relative error (eigenvalues) and mean of mean-square-error (eigenvectors) as defined in Equation \eqref{empiricalerror}  over the first $M=10$ modes as functions of $n$: (a) Well-sampled data; (b) Random data. } 
\label{fig3}
\end{figure}

{\color{black}
In Figure~\ref{fignew4}, we show the leading spectrum, $\tilde{\lambda}_1^{\gamma,\epsilon,n}$, of $\tilde{L}_{\epsilon,n}^\gamma$ as a function of $\gamma$. In this experiment, we set $\epsilon=4\times 10^{-5}$. Here, we show the estimate of each of the 10 trials we performed (in red '$\times$') and their average (in blue curve) of random uniformly distributed data for a fixed $n=10,000$. For $\gamma \leq 1/2$, the estimated leading eigenvalues $\tilde{\lambda}_1^{\gamma,\epsilon,n}$ are close to the leading eigenvalue of the Dirichlet Laplacian, $\lambda_1 = 1$, validating the theoretical regime noted in Remark~\ref{valid parameter regimes}. For $\gamma>1/2$, as the region of truncation becomes smaller, we see larger errors in estimating the Dirichlet leading spectrum (some trials produce a leading eigenvalue that is closer to 0), and eventually, for $\gamma> 1$, the leading eigenvalue $\tilde{\lambda}_1^{\gamma,\epsilon,n}$ estimates the Neumann eigenvalue, $\lambda_1 = 0$.

\begin{figure}
 \begin{center}
\includegraphics[width=.45\textwidth]{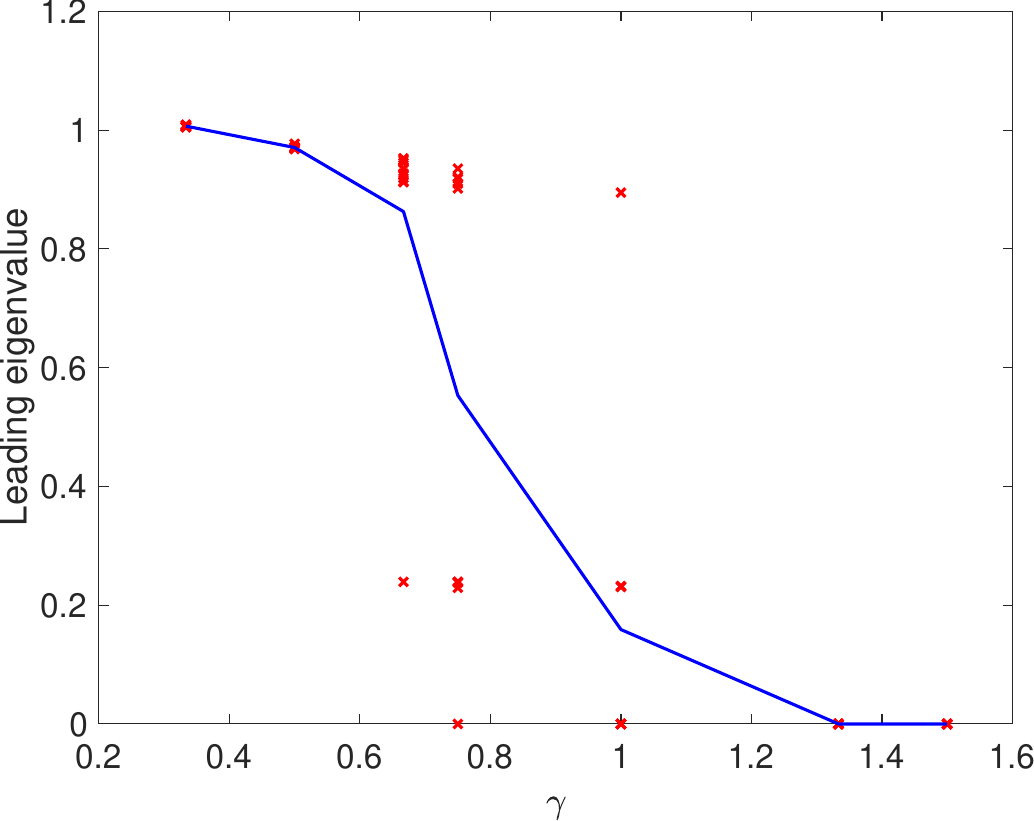}
\end{center}
\caption{Semicircle example: Estimated leading eigenvalue as a function of $\gamma$. There are 10 trials for each $\gamma$, the estimate $\tilde{\lambda}_1^{\gamma,\epsilon,n}$ for each trial is denoted in red `x'. The blue curve denotes the average of these estimates.}
\label{fignew4}
 \end{figure}

}

\subsection{Dirichlet Laplacian on a Semi-Torus} \noindent
In this example, let $\mathcal{M}$ denote the two-dimensional semi-torus embedded in $\mathbb{R}^3$, with standard embedding $$\left\{\left((2 + \cos \theta)\cos \phi,(2+ \cos \theta)\sin \phi, \sin \theta\right) :   0 \leq \phi \leq \pi, 0 \leq \theta \leq 2 \pi \right\}.$$ 
We consider the eigenvalue problem 
$$
\Delta f_k = \lambda_k f_k, \qquad f_k|_{\partial \mathcal{M}} = 0,
$$
where $\Delta$ is the Laplace-Beltrami operator on $\mathcal{M}$. It is well known that the Riemannian metric on $\mathcal{M}$ in coordinates $(\theta , \phi)$ is given by 
$$
g_{\theta, \phi}(u,v) = u^T \begin{bmatrix} 1 & 0 \\
0 & \sin^2 \theta
\end{bmatrix}v.
$$
It can then be checked that the Laplace-Beltrami operator on coordinates $(\theta , \phi)$ takes the form $$
\Delta f_k = -\frac{1}{(2 + \cos \theta)^2} \frac{ \partial^2 f_k}{\partial \phi^2} - \frac{\partial^2 f_k}{\partial \theta^2} + \frac{\sin \theta}{2 + \cos \theta} \frac{\partial f_k}{\partial \theta }. 
$$
The eigenvalue problem can then be semi-analytically solved using a separation of variables method. Assuming the solution $f_k$ is of the form $f_k = \Theta_k(\theta) \Phi_k(\phi)$, we substitute back into the above and obtain two equations: 
$$
\Phi''_k = m_k^2 \Phi, \qquad \Theta''_k - \frac{\sin \theta}{2 + \cos \theta} \Theta'_k - \frac{m^2_k}{(2 + \cos \theta)^2} \Theta_k = \lambda_k \Theta_k.
$$
Here, the value of $m_k$ will be specified such that $\{\Phi_k\}$ satisfy the boundary condition $\Phi_k (0) = 0 = \Phi_k(\pi)$. Particularly, $m_k=k$ and $\Phi_k(\phi) = \sin{k\phi}$. The eigenvalue problem corresponding to the second equation above, subjected to the periodic boundary condition, will be solved numerically with a finite-difference scheme on equal-spaced grid points. We treat eigenvalues and eigenvectors obtained in this semi-analytic fashion as the true solution. This is the same example used in \cite{jiang2023ghost} to verify other means (with the Ghost Points Diffusion Maps) for solving the Dirichlet eigenvalue problem. 

Well-sampled data were obtained by generating $\sqrt{n}$ data points $\theta_i$ uniformly spaced in $[0, 2\pi]$, and $\sqrt{n}$ data points $\phi_i$ uniformly spaced in $[0, \pi]$. A grid was then constructed, resulting in a total of $n$ data points. For random data, a similar process was employed, but with the data randomly distributed in the corresponding intervals according to a uniform distribution. The spectrum was then approximated using TGL, normalized with $m_0^\partial$, as in the previous example. Since the semi-analytical solutions to the above eigenvalue problem used for comparison are represented by vectors, whose components correspond to eigenfunction values at equally spaced grid points (uniformly spaced $64\times 64$ points on intrinsic coordinates),
\begin{figure}[htbp]
{\scriptsize \centering
\begin{tabular}{cc}
\normalsize (a)  & \normalsize (b) \\
\includegraphics[width=.45\textwidth]{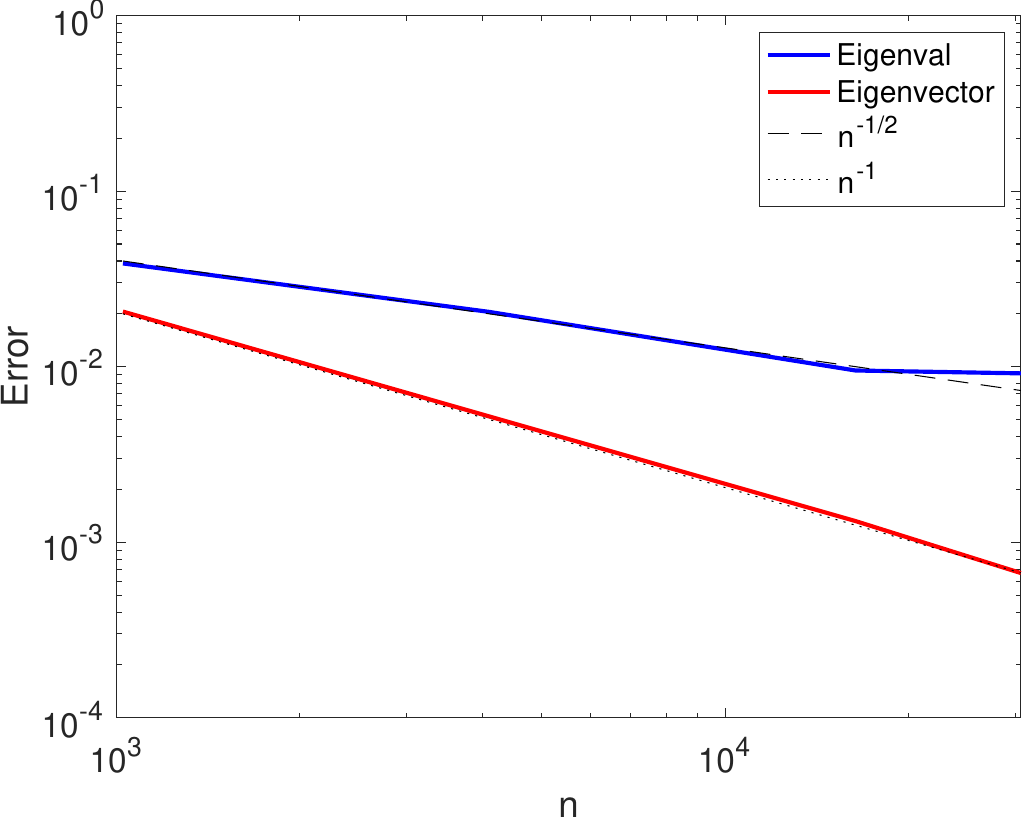} &
\includegraphics[width=.45\textwidth]{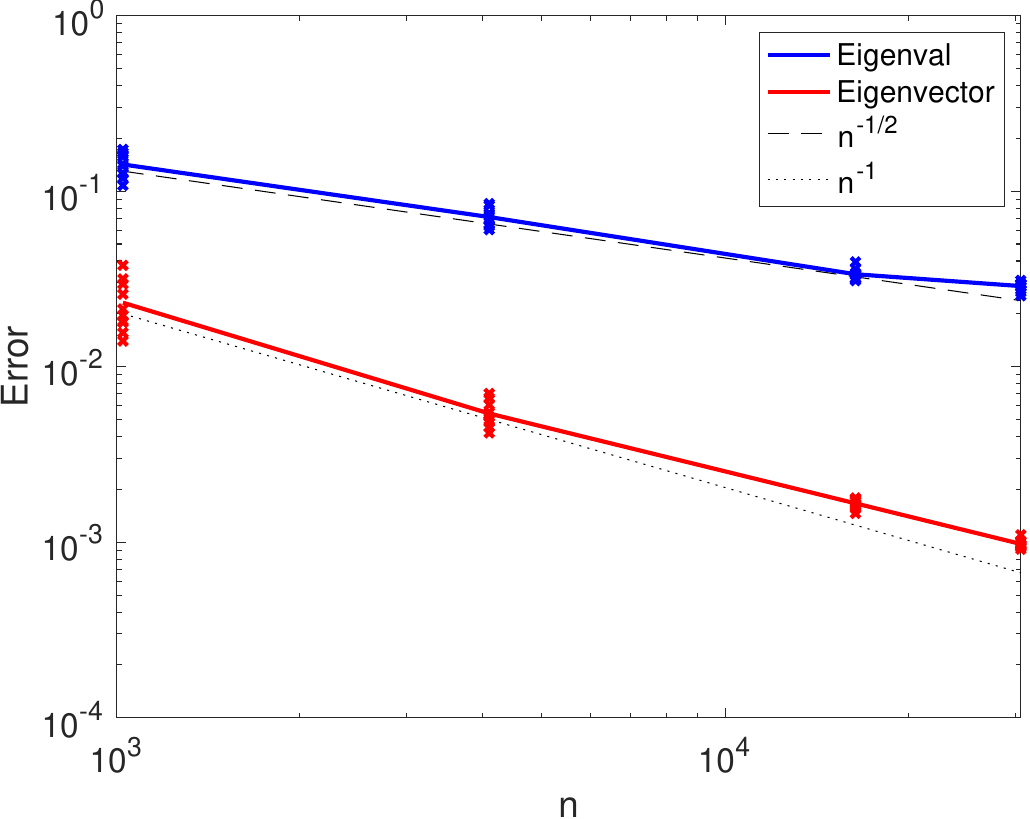}
\end{tabular}}
\caption{Semi-Torus Example for estimation with $m^\partial_0$ normalization, uniform sampling distribution. Mean of relative error (eigenvalues) and mean of mean-square-error (eigenvectors) as defined in Equation \eqref{empiricalerror}  as functions of $n$: (a) Over the first $M=10$ modes, well-sampled data; (b) Over the first $M=3$ modes, random data.}
\label{fig4}
\end{figure}
further considerations were needed to quantify the accuracy of eigenvectors of TGL since they are not necessarily discretized on the same grid points. Particularly, for random (and any well-sampled of different sizes of) data, the Nystr\"om extension method was used to interpolate the TGL eigenvectors to the grid on which the semi-analytic solutions are constructed. Eigenvector error was then evaluated by an average of the mean-square error in Equation \eqref{empiricalerror} on this grid. In both cases, $\epsilon$ was carefully hand-tuned. For well-sampled data, we set $\epsilon = \textrm{.0103,.003, .0008,.0005 }$ for $n = 32^2, 64^2, 128^2, 175^2$, respectively. For random data, we set $\epsilon = \textrm{.024,.009, .0035,.002}$ for $n = 32^2, 64^2, 128^2, 175^2$, respectively. Note that for random data, $10$ trials were performed for each value of $n$. 
In Figure \ref{fig4}, for both cases of well-sampled and random data, the errors in the estimation of eigenvalues and eigenvectors converges at a rate of $\mathcal{O}\left( n^{-1/2} \right)$ and $\mathcal{O}\left( n^{-1} \right)$, respectively, which are faster than the theoretical estimates. Here, we should point out that the errors for well-sampled data shown in Figure~\ref{fig4}(a) are averaged over the leading $M=10$ modes, whereas the errors for random data shown in Figure~\ref{fig4}(b) are averaged over the leading $M=3$ modes. We only considered the first $3$ modes in generating Figure \ref{fig4}(b) since, due to overlapping error in eigenvalues for higher modes for random data, it is unclear which analytic eigenvalue the approximating eigenvalues correspond to without a direct visual comparison of eigenvectors. In this case, the errors in the spectral estimation for higher modes are larger than the differences between consecutive eigenvalues, violating the assumption in Theorem~\ref{conveigvec}.
\begin{figure}[htbp]
{\scriptsize \centering
\includegraphics[width = .5\textwidth]{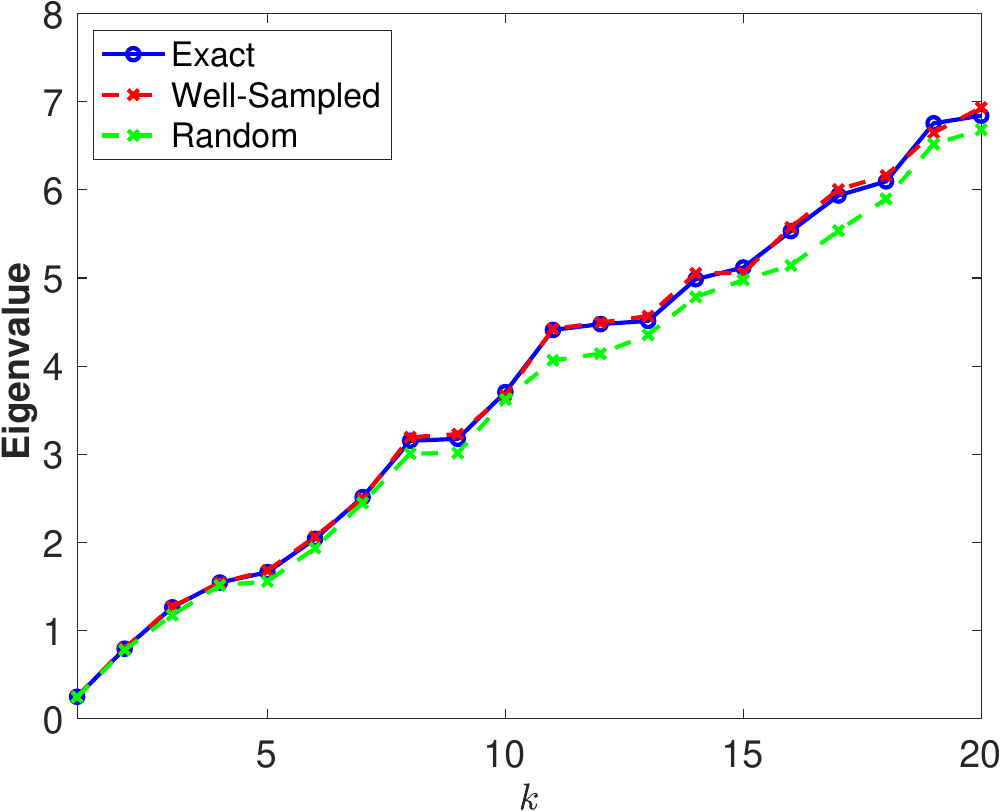}
\caption{Semi-Torus Example with $n = 128^2$: Comparison of eigenvalues.}
\label{fig5}}
\end{figure}

\begin{figure}[htbp]
{\scriptsize \centering
\begin{tabular}{ccc}
\normalsize  (a)  & \normalsize (b) & \normalsize (c) \\

\includegraphics[width=.3\textwidth]{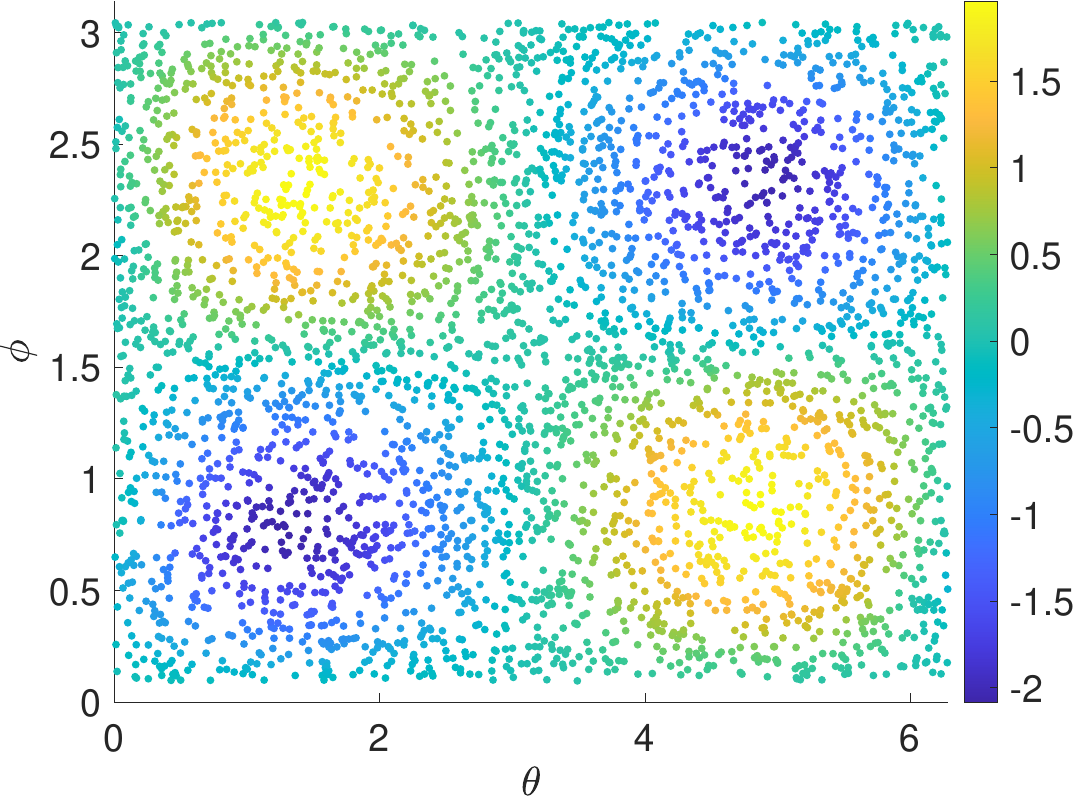} & 
\includegraphics[width=.3\textwidth]{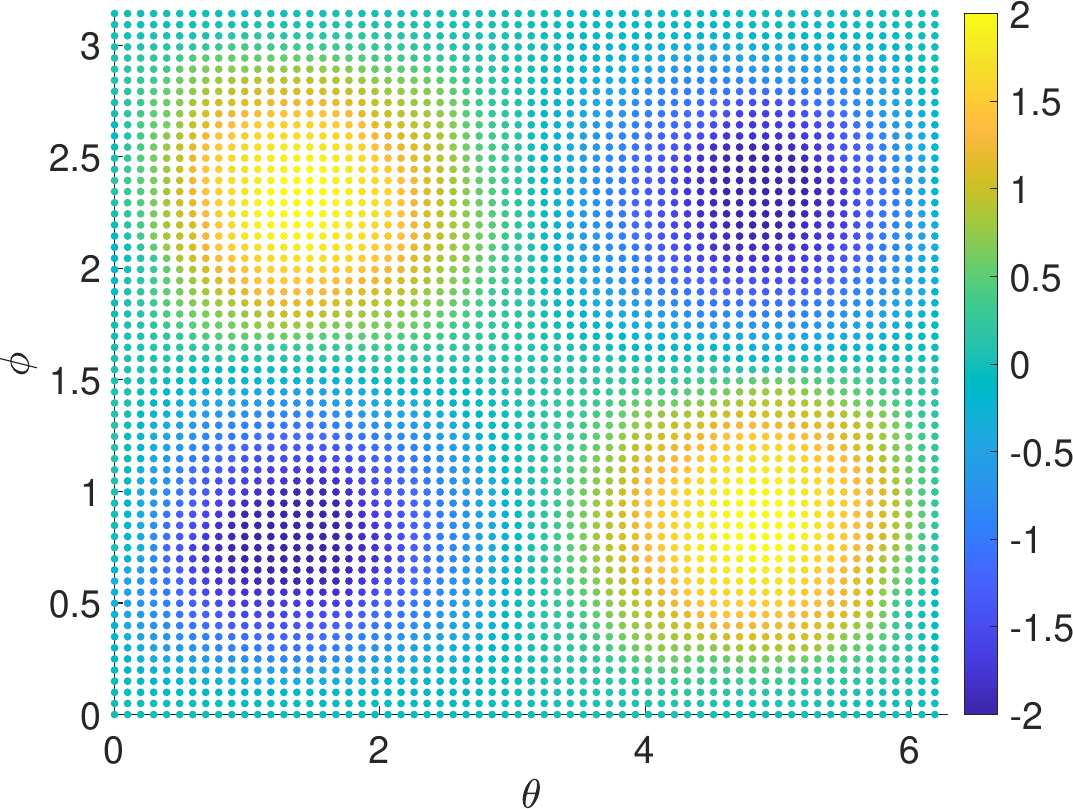} &
\includegraphics[width=.3\textwidth]{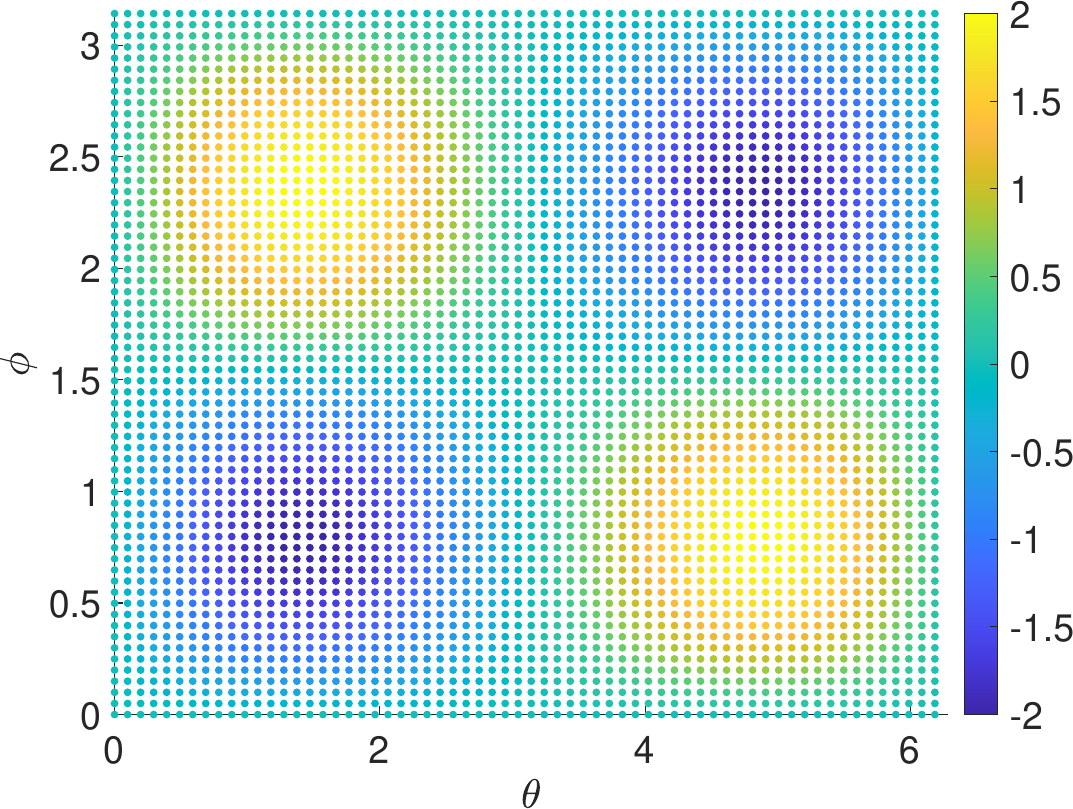} \\

\includegraphics[width=.3\textwidth]{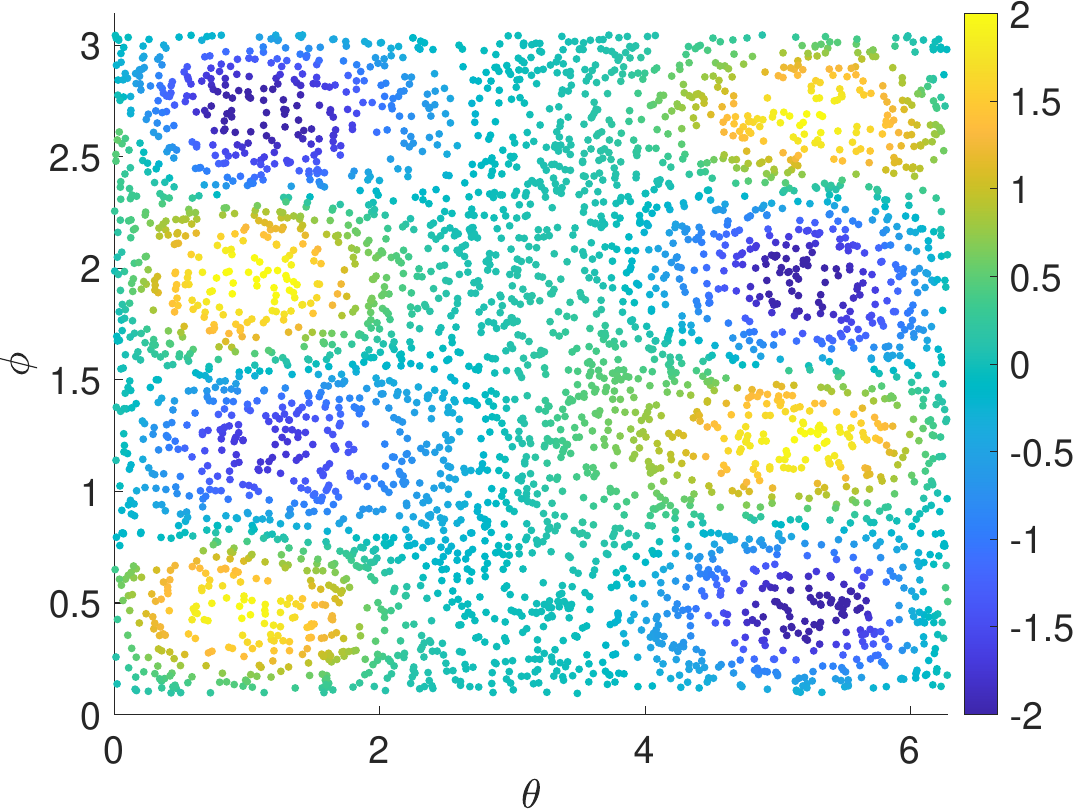} & 
\includegraphics[width=.3\textwidth]{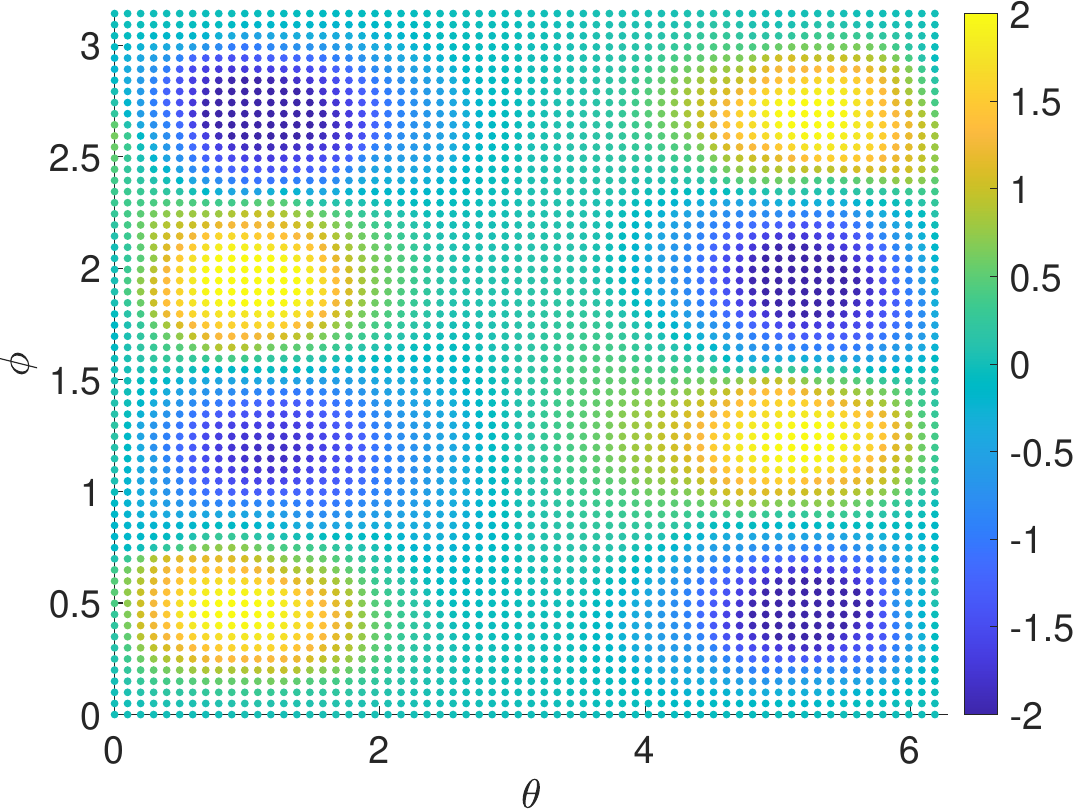} &
\includegraphics[width=.3\textwidth]{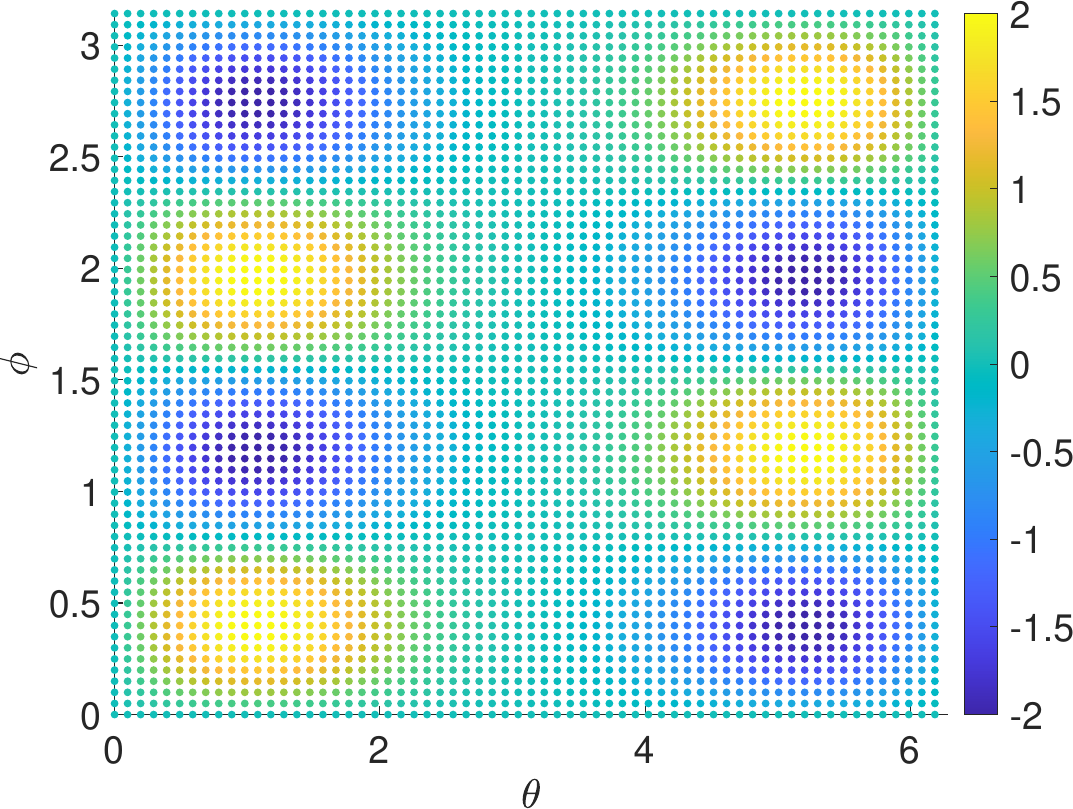} \\

\includegraphics[width=.3\textwidth]{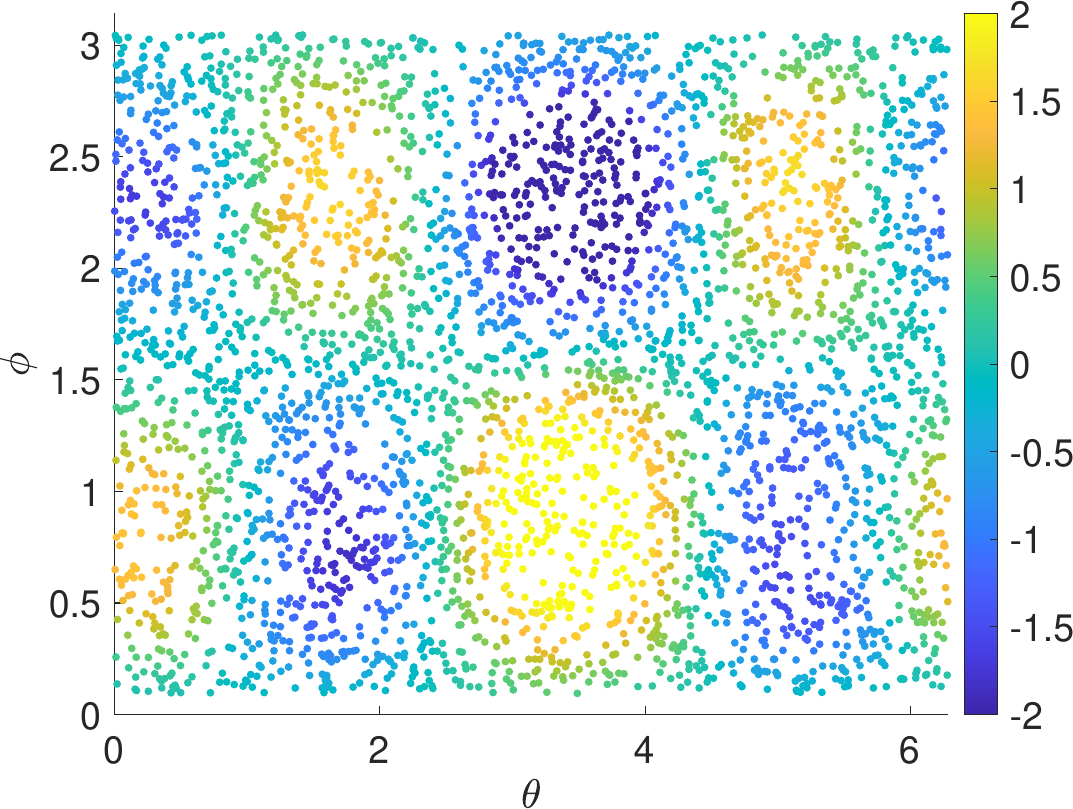} & 
\includegraphics[width=.3\textwidth]{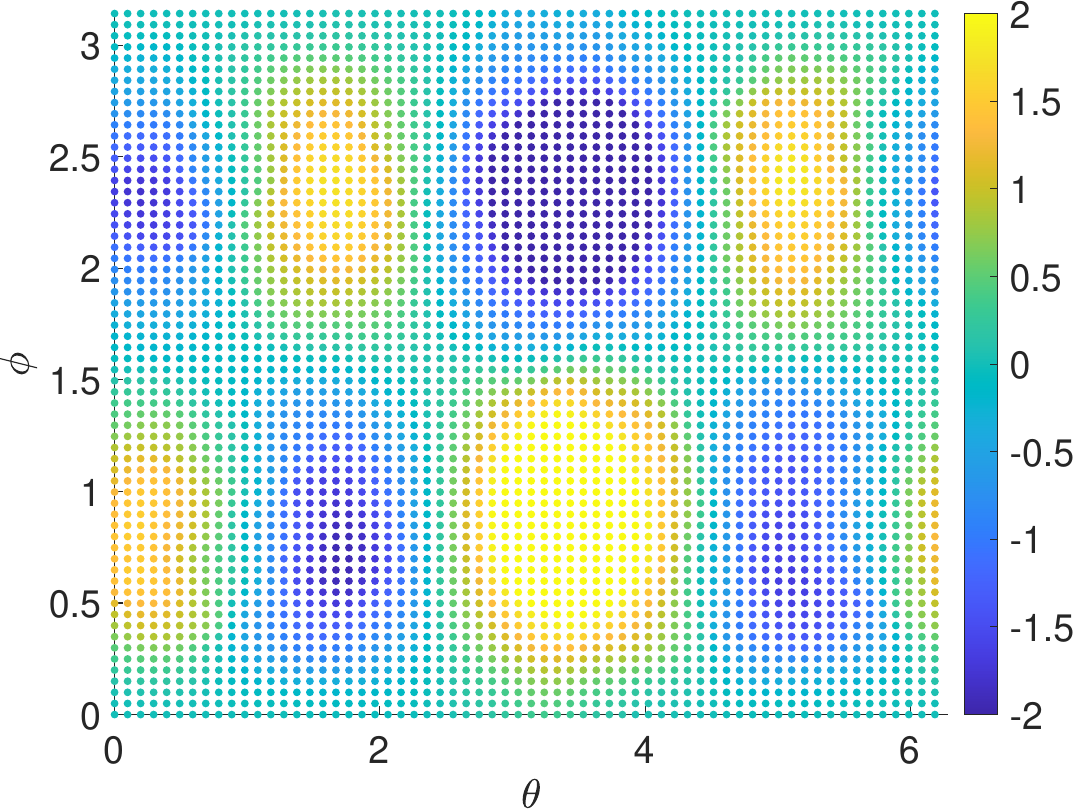} &
\includegraphics[width=.3\textwidth]{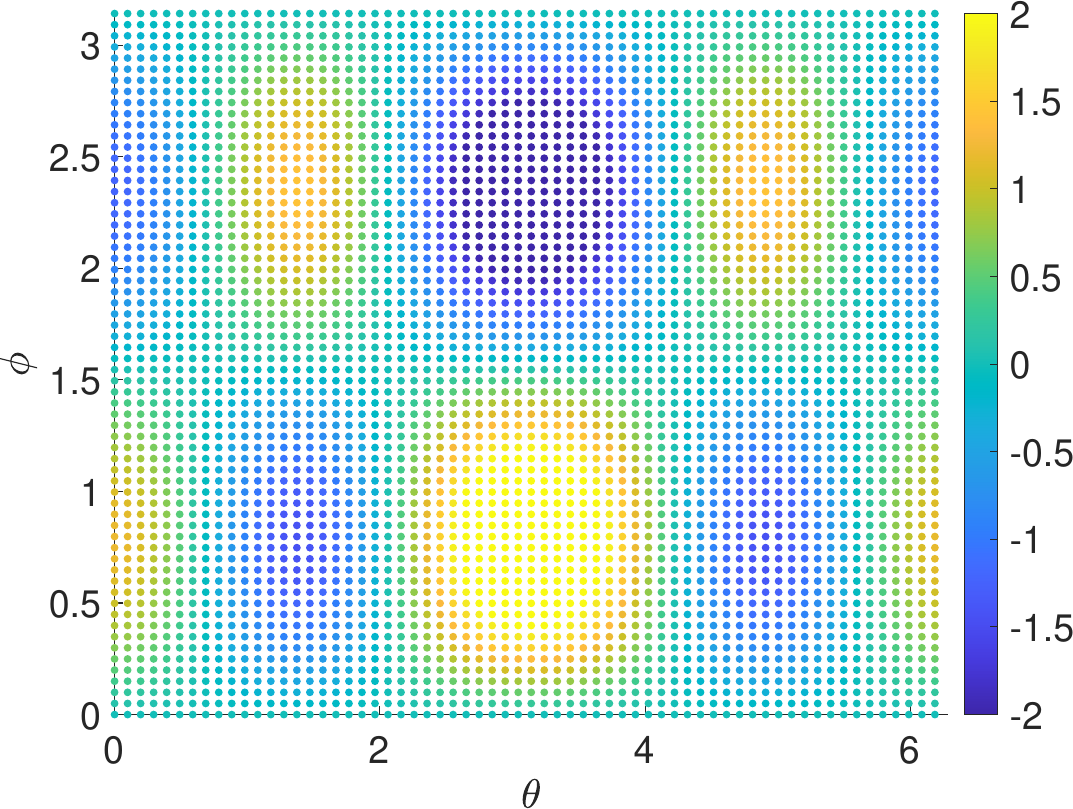} \\
\end{tabular} }
\caption{Semi-Torus Example with $n = 64^2$: Comparison of Eigenvectors in mode 6 (row 1), mode 13 (row 2) and mode 17 (row 3): (a) TGL with Random Data; (b) Nystrom Extension applied to (a); (c) Analytic. }
\label{fig6}
\end{figure}

For sufficiently large $n$, the spectral errors are no longer overlapping, and a larger portion of the spectrum can be analyzed, even for random data. In Figure \ref{fig5}, we display the performance of TGL in approximating the spectrum for fixed $n = 128^2$. For well-sampled data, the approximation and analytic are indistinguishable. For random data, TGL produces an accurate estimate for $k \leq 10$. In Figure~\ref{fig6}, we 
show the eigenvectors of higher modes for $n=64^2$. Here, the first column corresponds to the TGL eigenvectors constructed based on randomly distributed data, the second column shows the Nystrom interpolation to the grid points of the semi-analytic solutions, depicted in the third column. This result suggests that although the eigenvalue approximations become less accurate for large modes $k$, eigenvectors of higher modes can be well approximated, even with a relatively small amount of random data. In our numerics (results are not shown), we have seen that the first 20 modes can be well recovered (although they don't always occur in the same order as the semi-analytic solution when the data is random).

\section{Summary}\label{sec7} \noindent
In this paper, we established the spectral convergence (eigenvalues and eigenvectors) of a symmetrized graph Laplacian to the Laplace-Beltrami operator on closed manifolds by leveraging the result from \cite{rosasco2010learning} with the variational characterization of eigenvalues of the Laplace-Beltrami operator. For closed manifolds, the rates obtained are competitive with the best results in the literature. Moreover, using the weak convergence result from \cite{v:2020, vaughn2024diffusion}, we were able to adapt this proof with only slight modifications to demonstrate spectral convergence to the Laplace-Beltrami operator on compact manifolds with boundary satisfying either Neumann or Dirichlet boundary conditions. These results are, to our knowledge, the first spectral convergence results (with rates) in the setting of a compact manifold with Dirichlet boundary condition. The proofs of these results gave simple interpretations of two previously unexplained numerical results. Firstly, the spectral convergence of the Diffusion Maps algorithm on a compact manifold with boundaries to the Neumann Laplacian was fully explained as a combination of the weak consistency, as noted in \cite{vaughn2024diffusion}, and the well-known min-max result for the eigenvalues of the Neumann Laplacian on manifolds with boundary. Secondly, the numerical success of the Truncated Graph Laplacian (TGL) in approximating the Dirichlet Laplacian by truncating components of the graph Laplacian matrix corresponding to data points that are sufficiently close to the boundary was similarly seen as a combination of the two analogous facts for the Dirichlet Laplacian. Convergence of eigenvectors was obtained by using a method which is adapted from \cite{ct:2022}. This method showed the convergence of eigenvectors can be deduced from the convergence of eigenvalues, along with a $L^2(\mu_n)$ norm convergence result. In addition to these proofs, we numerically verified the effectiveness of TGL in approximating the Dirichlet Laplacian for some simple test examples of manifolds with boundary. In these simple examples, we saw that the empirical rate of convergence as $n \to \infty$ is somewhat faster than the theoretical bounds derived this paper when we truncate components of the graph Laplacian matrix corresponding to training data points whose distance from the boundary is less than $\sqrt{\epsilon}$, confirming the discussion in Remark~\ref{valid parameter regimes}. 

The spectral convergence results on manifolds with boundary outlined in this paper open up many avenues for future work. First, we suspect that the arguments in this paper can be modified to yield spectral convergence with rates when the discretization $\widetilde{L}_{\epsilon,n}$ is explicitly constructed using $K$-nearest neighbors. To our knowledge, the only other result of this form is reported in \cite{ct:2022}. Obtaining a result of this form in the setting of a compact manifold with boundary is of interest, particularly for determining the exact scaling of $K$ to optimize the convergence rate. Second, though rates of convergence of eigenvectors in $L^2(\mu_n)$ norm have been thoroughly investigated, this question remains primarily open for various other norms. Only recently have results for convergence of eigenvectors in  $L^\infty$ sense been established in \cite{dunson2021spectral}. Even so, such results only hold for closed manifolds, and not necessarily compact manifolds with boundaries. Third, though TGL yields convergence to the Dirichlet Laplacian, it is only a slightly modified version of the Diffusion Maps algorithm. It is plausible that further modifications, such as using the ghost points in constructing the TGL matrix as in \cite{hs:2022} or other methods such as Ghost Points Diffusion Maps (GPDM) \cite{jiang2023ghost}, can be shown to converge faster than TGL. Though \cite{jiang2023ghost} numerically demonstrated spectral convergence of both modified Diffusion maps and GPDM numerically for various Elliptic PDE's, the spectral convergence analysis is still an open problem as it involves solving the eigenvalue problem of non-symmetric matrices.

\section*{Acknowledgments}
The research of JH was partially supported under the NSF grants DMS-2207328, DMS-2229435, and the ONR grant N00014-22-1-2193.

\appendix

\section{Interpolation and Restriction via RKHS Theory} \label{rkhs-section} In this section, we provide a brief review of the RKHS theory, and adapt a well known result to our setting. The kernel $\hat{k}_\epsilon(x,y)$ is symmetric and positive definite. Hence, there exists a unique Hilbert space $\mathcal{H} {\subset} L^2(\mathcal{M})$, consisting only of smooth functions on $\mathcal{M}$, and possessing the reproducing property 
$$
f(x) = \langle f, (\hat{k}_\epsilon)_x \rangle_{\mathcal{H}} \qquad \forall f \in \mathcal{H},
$$
where $ (\hat{k}_\epsilon)_x$ denotes the function  $\hat{k}_\epsilon(x, \cdot)$.  Consider the inclusion operator $R_{\mathcal{H}}: \mathcal{H} \to L^2(\mathcal{M})$. We can form the self-adjoint, compact operator $R_\mathcal{H} R_\mathcal{H}^* : L^2(\mathcal{M}) \to L^2(\mathcal{M})$. It is a well known result that this coincides with $\hat{K}_\epsilon$ (see chapter 4 of \cite{steinwart2008support}). By the spectral theorem there is an orthonormal basis $\{\phi_0 , \phi_1, \dots \}$ consisting of eigenvectors of $R_\mathcal{H} R_\mathcal{H}^* $ with corresponding eigenvalues $\sigma^\epsilon_0 \geq \sigma^\epsilon_1 \geq \dots \searrow 0$. Note that for nonzero $\sigma^\epsilon_i$, defining $\psi_i : = \frac{1}{\sqrt{\sigma^\epsilon_i}}R_\mathcal{H}^* \phi_i$, we have that $\{\psi_0, \psi_1, \dots \}$ is an orthonormal set w.r.t the $\mathcal{H}$- inner product. With this in mind, we can define the Nystrom extension operator $\mathcal{N} : D(\mathcal{N}) \to \mathcal{H}$ as 
\begin{equation}
    \mathcal{N} \left(\sum_{i=0}^\infty c_i \phi_i \right) = \sum_{i=0}^\infty \frac{c_i}{\sqrt{\sigma^\epsilon_i}} \psi_i,\notag
\end{equation}
where $D(\mathcal{N})$ is the  subset of $L^2(\mathcal{M})$ consisting functions $f = \sum_{i=0}^\infty c_i \phi_i$ such that $\sum_{i=0}^\infty \frac{c^2_i}{\sigma_i^\epsilon} < \infty$. 
 Calling this an extension operator is justified in that $R_\mathcal{H} : \mathcal{H} \to L^2(\mathcal{M})$ is a left inverse for $\mathcal{N}$. Indeed, 
\begin{equation*}
    R_\mathcal{H} \mathcal{N} \left(\sum_{i=0}^\infty c_i \phi_i \right) = R_\mathcal{H} \left(\sum_{i=0}^\infty \frac{c_i}{\sqrt{\sigma_i^\epsilon}} \psi_i \right) = \sum_{i=0}^\infty \frac{c_i}{\sigma_i^\epsilon} R_\mathcal{H} R_\mathcal{H}^*\phi_i = \sum_{i=0}^\infty c_i \phi_i.
\end{equation*}
Hence, $\mathcal{N}$ establishes an isometric isomorphism between $D(\mathcal{N})$ and $\mathcal{H}$, such that for any $f, g\in D(\mathcal{N})$, where $f = \sum_i a_i \phi_i$ and  $g = \sum_i b_i \phi_i$, we have,
\begin{equation*}
    \left\langle f,g \right\rangle_{\mathcal{H}} : = \sum_i \frac{a_ib_i}{\sigma^\epsilon_i}.
\end{equation*}

While the above extension is theoretically useful, in practice it is more convenient to consider the following construction. Let $R_n : \mathcal{H} \to \mathbb{R}^n$ be the operator restricting functions to the data set. Define the Nystrom extension $\mathcal{N}_{\mu_n}:L^2(\mu_n) \to \mathcal{H}$ on the orthonormal eigenvectors $\{ \hat{u}_j\}$ of the matrix $R_nR_n^*$, whose components are $\frac{1}{n}\hat{k}_\epsilon(x_i,x_j)$ by
\begin{equation*}
    \mathcal{N}_{\mu_n} \hat{u}_j = \frac{1}{\sigma^{\epsilon,n}_{j}} R^*_n \hat{u}_j,
\end{equation*}
where $\sigma^{\epsilon,n}_{j}$ are the corresponding matrix eigenvalues. Defining $v_j := \frac{1}{\sqrt{\sigma^{\epsilon,n}_{j}}}R^*_n \hat{u}_j$, we can equivalently say $\mathcal{N}_{\mu_n} \hat{u}_j = \frac{1}{\sqrt{\sigma^{\epsilon,n}_{j}}} v_j$. This makes the analogy with the above Nystrom extension clear. Moreover, $\{v_j\}$ form an orthonormal set in $\mathcal{H}$. Indeed, 
\BEA
    \langle v_i,v_j\rangle_{\mathcal{H}} = \left\langle  \frac{1}{\sqrt{\sigma^{\epsilon,n}_{i}}}R^*_n \hat{u}_i, \frac{1}{\sqrt{\sigma^{\epsilon,n}_{j}}}R^*_n \hat{u}_j \right\rangle_{\mathcal{H}} = \frac{1}{\sqrt{\sigma^{\epsilon,n}_{i}}} \cdot \frac{1}{\sqrt{\sigma^{\epsilon,n}_{j}}} \left\langle \hat{u}_i, R_nR^*_n \hat{u}_j \right\rangle_{L^2(\mu_n)} = \delta_{i,j}.\notag
\EEA
Hence, given a vector $\hat{u} = \sum_{i=1}^n c_i \hat{u}_i$, we have a continuous analog $ \sum_{i=1}^n \frac{c_i}{\sqrt{\sigma^{\epsilon,n}_{i}}} v_i \in \mathcal{H}$. The following result, adapted from the spectral convergence result in \cite{rosasco2010learning}, relies heavily on the RKHS Theory mentioned above, is used extensively in the sequel. 
\begin{lemma}
\label{adapted-rbv-supplement}
( adapted from Proposition 10 of \cite{rosasco2010learning} ) With probability greater than $1 - \frac{2}{n^2}$, 
\begin{equation*}
    \sup_{1\leq i \leq n } |\lambda^\epsilon_i - \lambda_i^{\epsilon,n}| = \mathcal{O} \Big(\frac{ \sqrt{\log(n)}}{\epsilon^{d/2 + 1} \sqrt{n}}\Big),
\end{equation*}
as $n\to \infty$ for a fixed $\epsilon>0$, where $\{\lambda_i^\epsilon\}$ and $\{\lambda_i^{\epsilon,n}\}$ are the eigenvalues of $L_\epsilon$ as defined in Equation \eqref{symmetricDM} and $L_{\epsilon,n}$ as defined in Equation \eqref{Lepsilonn}, respectively.
\end{lemma} 
\begin{remark}\label{connectivity threshold}
While Lemma \ref{adapted-rbv-supplement} is valid as stated above, for any fixed $\epsilon>0$, convergence is also achieved when 
\[
\epsilon\gg  \left( \frac{\log(n)}{n} \right)^{\frac{1}{d+2}} \gg \left( \frac{\log n}{n} \right)^{\frac{2}{d}},
\]
where the last relation suggests that it is larger than the connectivity threshold for random graph \cite{penrose2003random,ct:2022}.
\end{remark}

\begin{proof}
 Let $\kappa$ be the essential supremum of $k_\epsilon$. Notice that $\frac{\kappa}{d_{\textrm{min}}}$ is an upperbound of the essential supremum of $\hat{k}_\epsilon$. Denote the $i$-th eigenvalue of the matrix $(K)_{ij} : = \frac{1}{n} \hat{k}_\epsilon(x_i,x_j)$ by $\sigma^{\epsilon,n}_{i}$, and the $i$-th eigenvalue of the integral operator $\hat{K}_\epsilon$ by $\sigma^{\epsilon}_i$. By Proposition $10$ in \cite{rosasco2010learning}, with probability greater than $1-2e^{-\tau}$,
\begin{equation*}
   \sup_{1\leq i \leq n} | \sigma^{\epsilon,n}_{i} - \sigma^{\epsilon}_{i} | \leq \frac{2\sqrt{2}\kappa \sqrt{\tau}}{\sqrt{n} \epsilon^{d/2} d_{\textrm{min}}}. 
\end{equation*}
However, we have already noted that $\lambda^\epsilon_i = \frac{2}{m_2\epsilon}(1 - \sigma^\epsilon_i)$ Similarly, $\lambda^{\epsilon,n}_i = \frac{2}{m_2\epsilon}(1 - \sigma^{\epsilon,n}_i)$. Hence, 
\begin{equation*}
    \sup_{1\leq i \leq n} | \lambda^{\epsilon,n}_{i} - \lambda^{\epsilon}_{i} | = \sup_{1\leq i \leq n} \left| \frac{2}{m_2\epsilon} \left(\sigma^{\epsilon,n}_{i} - \sigma^{\epsilon}_{i} \right) \right| \leq \frac{4\sqrt{2}\kappa \sqrt{\tau}}{m_2\epsilon^{d/2 + 1} \sqrt{n} d_{\textrm{min}}}.
\end{equation*}
Choose $\tau = \log(n^2)$. The above simplifies to,
\[ \sup_{1\leq i \leq n } |\lambda^\epsilon_i - \lambda_i^{\epsilon,n}| = \mathcal{O} \Big(\frac{\sqrt{\log n}}{\epsilon^{d/2 + 1} \sqrt{n}}\Big).
\]
This completes the proof. 
\end{proof} 
\begin{remark}
The above result relates the spectrum of the integral operator $L_\epsilon$ to the spectrum of the matrix $L_{\epsilon,n}$. What remains is to relate the spectra of the two matrices $L_{\epsilon,n}$ and $\tilde{L}_{\epsilon,n}$, and finally the spectrum of the integral operator $L_\epsilon$ to the spectrum of Laplace Beltrami. The former will be shown in Lemma \ref{matrix-eigenvalues-supplement}, while the latter is postponed until the proof of the main theorem. 
\end{remark}

\section{Lemma for Spectral Convergence}\label{sec3-supplement} 

In this section, we state and prove the convergence of eigenvalues of the matrices $L_{\epsilon,n}$ and $\tilde{L}_{\epsilon,n}$ that is needed in proving Theorem~\ref{spectralconvneumann}. Before this is stated, we emphasize that in \cite{rosasco2010learning}, the results hold when $\mathcal{M}$ is simply a subset of Euclidean space. In particular, the following result, as well as Lemma \ref{adapted-rbv-supplement}, holds for any compact manifold with or without boundary.  

\begin{lemma}
\label{matrix-eigenvalues-supplement}
With probability $1 - \frac{1}{n^2}$, 
\begin{equation*}
    \left|\tilde{\lambda}^{\epsilon,n}_i - \lambda^{\epsilon,n}_i \right| =  \mathcal{O}\left( \frac{ \sqrt{\log n}}{\epsilon^{d/2+1} \sqrt{n}}\right), 
\end{equation*}
as $n\to \infty$ for a fixed $\epsilon>0$.
\end{lemma}
\begin{remark} We point out that the estimate above is for the error of the eigenvalues of two matrices of fixed size $n\times n$, and the error (as $n\to \infty$) is induced by the Monte-Carlo estimate of $d(x) := \epsilon^{-d/2}\int_{\mathcal{M}}k_\epsilon(x,y)dV(y) \approx \frac{\epsilon^{-d/2}}{n}\sum_{k=1}^n k_\epsilon(x_i,x_k)$ that appears in the denominator of the kernel in Equation \eqref{kernelktilde} as an approximation to the kernel in Equation \eqref{kernelkhat}. 
Better estimates for the above quantity can be made, using sampling density approximation for manifolds with boundary \cite{bs:2017}. However, for our purposes, a rougher estimate suffices, since this rate is not worse than the one obtained in the Lemma~\ref{adapted-rbv-supplement}. 
\end{remark}
\begin{proof}
Notice that the $(i,j)$-th entry of $L_{\epsilon,n} - \tilde{L}_{\epsilon,n}$ is a constant multiple of 
$$
\frac{1}{\epsilon n} \left( \hat{k}_\epsilon (x_i,x_j) - \tilde{k}_\epsilon (x_i,x_j) \right).
$$
We begin by bounding the difference of terms in the denominator with high probability. Namely, fix $x_j$, and consider 
$$
\left| \frac{1}{n} \sum_{i=1}^n \epsilon^{-d/2} k_\epsilon(x_j,x_i) - \int_\mathcal{M} \epsilon^{-d/2} k_\epsilon(x_j,y) dV(y) \right|
$$
To this end, we use Chernoff bound (see, e.g., \cite{s:2006},\cite{bh:2016}). Note that 
\BEA
\mathbb{E}\left( \left[\epsilon^{-d/2} k_\epsilon(x_j, \cdot) - \epsilon^{-d/2} \int_\mathcal{M} k_\epsilon(x_j, y)dV(y) \right]^2  \right) =  \int_\mathcal{M}  \epsilon^{-d} |k_\epsilon (x_j,y)|^2 dV(y)   - \left( \epsilon^{-d/2}  \int_\mathcal{M}  k_\epsilon(x_j,y) dV(y) \right)^2 \notag
\EEA
For points $x_j$ within distance of $\sqrt{\epsilon}$ of the boundary, we use the asymptoptic expansion found in Lemma 9 in \cite{cl:2006}:
\BEA
 \int_\mathcal{M} \epsilon^{-d} |k_\epsilon (x_j,y)|^2 dV(y) = 2^{-d/2}\epsilon^{-d/2} \int_\mathcal{M}  \left(\frac{\epsilon}{2}\right)^{-d/2} \exp\left(- \frac{\| x_j - y \|^2}{\frac{\epsilon}{2}} \right) = 2^{-d/2} \epsilon^{-d/2} \left[m_0^{\frac{\epsilon}{2}}(x_j) + \mathcal{O}\left( \epsilon \right) \right] \notag
\EEA
where the $\epsilon^{1/2}$-term that involves $\frac{\partial f}{\partial n}$ vanishes since $f = 1$ is a constant. Similarly, \BEA
\left( \epsilon^{-d/2} \int_{\mathcal{M}} k_\epsilon(x_j,y) dV(y) \right)^2 = [m_0^\epsilon (x_j)]^2 + \mathcal{O}\left( \epsilon \right). \notag
\EEA 
In the above, $m_0^\epsilon$ is $\mathcal{O}(1)$. Hence, using Chernoff bound, 
\BEA
\mathbb{P}\left(  \left| \frac{1}{n} \sum_{i=1}^n \epsilon^{-d/2} k_\epsilon(x_j,x_i) - \epsilon^{-d/2}\int_\mathcal{M} k(x_j,y) dV(y)  \right| > \delta \right) \leq  \exp\left(\frac{-\delta^2 n}{4 m_0^{\epsilon/2} \epsilon^{-d/2}} \right) \notag.
\EEA
Balancing by choosing $\delta = \frac{ 2 \sqrt{C} \sqrt{\log(n^3)}}{ \epsilon^{d/4} \sqrt{n}}$, where $C$ denotes an upperbound for $m_0^{\epsilon/2}$  independent of $\epsilon$, we have that with probability higher than $1 - \frac{1}{n^3}$, 
$$
\left| \frac{1}{n} \sum_{i=1}^n \epsilon^{-d/2} k_\epsilon(x_j,x_i) - \int_\mathcal{M} \epsilon^{-d/2} k_\epsilon(x_j,y) dV(y) \right| = \mathcal{O}\left( \frac{\sqrt{\log(n)}}{ \epsilon^{d/4} \sqrt{n}} \right).
$$
The same argument holds for points $x_j$ at a distance greater than $\epsilon^{1/2}$ from the boundary, with the simpler expansion holding. Using the above bound $n$ times, we have that with probability higher than $1 - \frac{1}{n^2}$, 
$$
\left| \frac{1}{n} \sum_{i=1}^n \epsilon^{-d/2} k_\epsilon(x_j,x_i) - \int_\mathcal{M} \epsilon^{-d/2} k_\epsilon(x_j,y) dV(y) \right| = \mathcal{O}\left( \frac{\sqrt{\log(n)}}{ \epsilon^{d/4} \sqrt{n}} \right), 
$$
for $j = 1, \dots, n.$ Returning to the entries of $(L_{\epsilon,n} - \tilde{L}_{\epsilon,n})$, we have 
\BEA 
\frac{1}{\epsilon n} \left( \hat{k}_\epsilon (x_i,x_j) - \tilde{k}_\epsilon (x_i,x_j) \right) = \frac{1}{\epsilon n} \left( \frac{ \epsilon^{-d/2} k_\epsilon(x_i,x_j) \left( \tilde{d}(x_i) + \tilde{d}(x_j) - d(x_i) - d(x_j) \right) }{2(d(x_i) + d(x_j))(\tilde{d}(x_i) + \tilde{d}(x_j))}\right), \notag 
\EEA
where recall that 
$$
d(x) = \int \epsilon^{-d/2} k_\epsilon(x,y) dV(y),
$$
and we define 
$$
\tilde{d}(x) =  \frac{1}{n} \sum_{i=1}^n  \epsilon^{-d/2} k_\epsilon(x,x_i).
$$
The terms in the denominator are bounded away from $0$ by a constant, and the difference of $d, \tilde{d}$ terms in the numerator are $\mathcal{O}\left( \frac{ \sqrt{\log n} }{\epsilon^{d/4} \sqrt{n}} \right)$. Hence, the entries of $(L_{\epsilon,n} - \tilde{L}_{\epsilon,n})$ are bounded by constant multiple of 
$$
\frac{\sqrt{\log n}}{\epsilon \epsilon^{d/4} \sqrt{n}} \left(  \frac{\epsilon^{-d/2} k_\epsilon(x_i,x_j)}{n} \right). 
$$
It follows that 
\BEA
\sup_{\|v\|_2 = 1} \| L_{\epsilon,n} v - \tilde{L}_{\epsilon,n} v \|_2 &\leq& \frac{\textup{Const} \sqrt{\log n}}{\epsilon \epsilon^{d/4} n^{1/2}} \left( \sum_{i=1}^n \left( \sum_{j=1}^n \frac{ \epsilon^{-d/2} k_\epsilon(x_i,x_j) v(x_j)}{n} \right)^2 \right)^{1/2} \notag  \\
&\leq& \frac{\textup{Const}\sqrt{\log n}}{\epsilon \epsilon^{d/4} n^{1/2}} \left( \sum_{i=1}^n  \underbrace{\left( \sum_{j=1}^n \frac{ \epsilon^{-d/2} |k_\epsilon(x_i,x_j)|^2}{n} \right)}_{=\mathcal{O}(1)} \left(\frac{\epsilon^{-d/2}}{n} \sum_{j=1}^n v(x_j)^2\right)\right)^{1/2}    \notag  \\
&\leq& \frac{\textup{Const}\sqrt{\log n}}{\epsilon \epsilon^{d/4} n^{1/2}} \left( \sum_{i=1}^n  \underbrace{\left( \sum_{j=1}^n \frac{ \epsilon^{-d/2} |k_\epsilon(x_i,x_j)|^2}{n} \right)}_{=\mathcal{O}(1)} \left(\frac{\epsilon^{-d/2}}{n}\right)\right)^{1/2},\notag 
\EEA
where we've used the Cauchy-Schwarz inequality, and that $\|v\|_2=1$. The outside sum over $i$ cancels a factor of $n$, and we arrive at, 
\BEA
\sup_{\|v\|_2 = 1} \| L_{\epsilon,n} v - \tilde{L}_{\epsilon,n} v \|_2 \leq \frac{\textup{Const}\sqrt{\log n}}{\epsilon^{d/2 + 1} n^{1/2}}. \notag
\EEA
The desired result then follows from Weyl's inequality. 
\end{proof}

\section{Proof of Theorem~\ref{conveigvec}} \label{app:proof of eig}
 
Before we prove Theorem~\ref{conveigvec}, we will deduce several norm convergence results needed to prove convergence of the eigenvectors of $\tilde{L}_{\epsilon,n}$ to the eigenfunctions of $\Delta$. 
\comment{
In the course of the proof below, we will need the following definition. Given a manifold $\mathcal{M}$ with boundary, let $0< \gamma < 1/2$ and define 
\BEA
\mathcal{M}_{\epsilon^\gamma} : = \left\{ x \in \mathcal{M} : \textrm{inf}_{y \in \partial \mathcal{M}} \| x - y\|_g > \epsilon^\gamma \right\}, \label{Mawayfrbdry}
\EEA
as the set of points on the manifold whose geodesic distance from the boundary is at least $\epsilon^\gamma$.}

\begin{proposition}\label{dependence on r}
Let $\epsilon>0$ and $0<\gamma< 1/2$ such that $\epsilon^\gamma < r_C$, where $r_C>0$ denotes the normal collar width. Then
\[
 c \epsilon^\gamma \leq \textup{Vol}(\mathcal{M} \setminus \mathcal{M}_{\epsilon^\gamma})  \leq C \epsilon^{\gamma},
\]
for some $c,C>0$ that depend on $|\partial \mathcal{M}|$ but not on $\epsilon$.
\end{proposition}

\begin{proof}
Let $x \in \mathcal{M}$ whose geodesic distance from $\partial \mathcal{M}$ is $\epsilon^\gamma$. Without loss of generality, denote by $0\in \partial \mathcal{M}$ be the closest point (with respect to geodesic distance) to $x$ from the boundary. Since $\epsilon^\gamma < r_C$ where $r_C$ is the normal collar width, the Gauss lemma \cite{l:2003} states that the boundary of $\mathcal{M}_{\epsilon^\gamma}$  is a hypersurface parallel to $\partial \mathcal{M}$. 

Then we can employ a cylinder approximation to bound, 
\[
c |{\partial \mathcal{M}}| \epsilon^\gamma \leq  \textup{Vol}(\mathcal{M} \setminus \mathcal{M}_{\epsilon^\gamma})  \leq C |{\partial \mathcal{M}}| \epsilon^\gamma, 
\]
and the proof is complete.
\end{proof}

\begin{lemma}
\label{continuous norm lemma}
Fix $0 < \gamma < 1/2$ and any $\epsilon>0$ such that $\epsilon^\gamma < r_C$. Let $f \in C^\infty(\mathcal{M})$ be a Neumann eigenfunction of $\Delta$. Then 
\BEA
\|L_\epsilon f - \Delta f \|_{L^2(\mathcal{M})} = \mathcal{O}\left(\epsilon^{\gamma/2}\right), \notag
\EEA
as $\epsilon^{\gamma/2}\to 0$.
\end{lemma}
\begin{proof}
For $x\in \mathcal{M}_{\epsilon^\gamma}$, we have that 
\BEA
\frac{(K_\epsilon f)(x)}{d(x)} = \frac{m_0 f(x) + \epsilon \frac{m_2}{2}(\omega (x) f(x) + \Delta f (x)) + \mathcal{O}(\epsilon^2)}{m_0 + \epsilon \frac{m_2}{2}\omega(x) + \mathcal{O}(\epsilon^2)} = f(x) + \epsilon \frac{m_2 \Delta f (x)}{2m_0} + \mathcal{O}(\epsilon^2), \notag
\EEA
while 
\BEA
K_\epsilon  \left( \frac{f}{d} \right)(x) &=& m_0\frac{f(x)}{d(x)} + \epsilon \frac{m_2}{2}\left(\omega(x) \frac{f(x)}{d(x)} + \Delta(f/d)(x) \right) + \mathcal{O}(\epsilon^2) \notag \\
&=& \frac{m_0f(x)}{m_0 + \epsilon \frac{m_2}{2} \omega(x) + \mathcal{O}(\epsilon^2)} + \epsilon \frac{m_2}{2} \left( \frac{\omega(x)f(x)}{m_0 + \epsilon \frac{m_2}{2} \omega(x) + \mathcal{O}(\epsilon^2)} +  \Delta(f/d) \right) + \mathcal{O}(\epsilon^2) \notag \\
&=& f(x) - \epsilon  \frac{m_2 \omega(x) f(x)}{2 m_0} + \epsilon \frac{m_2 \omega(x) f(x)}{2m_0} + \epsilon \frac{m_2}{2} \Delta (f/m_0 + \mathcal{O}(\epsilon) ) + \mathcal{O}(\epsilon^2) \notag \\
&=& f(x) + \epsilon \frac{m_2}{2m_0} \Delta f(x) + \mathcal{O}(\epsilon^2). \notag
\EEA
From these expansions, we obtain
\[
L_\epsilon f(x) = \frac{1}{2} (L_{\textup{rw}, \epsilon}f(x) + L^*_{\textup{rw},\epsilon}f(x) ) = \Delta f(x) + \mathcal{O}(\epsilon^2),
\]
where we have used the normalization in the kernel that satisfies $m_0=\frac{m_2}{2}$. 
As for the point near the boundary, $x\in\mathcal{M}\backslash\mathcal{M}_{\epsilon^\gamma}$,
we use the asymptoptic expansion in Proposition 3.4.16 from \cite{v:2020}. Namely, note that 
\BEA
\frac{K_\epsilon f(x)}{d(x)} &=& \frac{m_0^\partial (x) f(x) + \sqrt{\epsilon}m_1^\partial(x) \left(\eta_x \cdot \nabla f (x) + \frac{d-1}{2}H(x)f(x)\right) + \mathcal{O}(\epsilon)}{m_0^\partial(x) + \sqrt{\epsilon}m_1^\partial(x) \left( \frac{d-1}{2}H(x)\right) + \mathcal{O}(\epsilon)} = f(x) + \sqrt{\epsilon} \left( \frac{m_1^\partial(x) \eta_x \cdot \nabla f(x)}{m^\partial_0(x)} \right) + \mathcal{O}(\epsilon), \notag
\EEA
while
\BEA
K_\epsilon \left(\frac{f}{d}\right)(x) &=& \frac{m_0^\partial(x)f(x)}{m_0^\partial(x) + \sqrt{\epsilon}m_1^\partial(x)\left( \frac{d-1}{2}H(x) \right) + \mathcal{O}(\epsilon)}  + \sqrt{\epsilon}m_1^\partial(x)\left( \eta_x \cdot \nabla(f/d) \notag +  \frac{H(x)f(x)}{m_0^\partial(x) + \mathcal{O}(\epsilon^{1/2})}  \right) + \mathcal{O}(\epsilon) \notag \\
&=& f(x) + \sqrt{\epsilon}\left( \frac{m_1^\partial(x) \eta_x \cdot  \nabla(f)(x)}{m_0^\partial(x)} \right) + \mathcal{O}(\epsilon).  \notag
\EEA
and deduce
\[
L_\epsilon f(x) = O(1), \quad \forall x\in \mathcal{M}\backslash \mathcal{M}_{\epsilon^\gamma}
\]
and so does,
\[
L_\epsilon f(x) - \Delta f(x) = O(1) 
\]
Since $f\in C^\infty(\mathcal{M})$ and both the kernel and the manifold are smooth, then it is clear that the constant in the big-O can be uniformly bounded since they are simply Taylor expansion terms. Thus,
\BEA
\|L_\epsilon f - \Delta f \|^2_{L^2(\mathcal{M})} =  \int_{\mathcal{M}_{\epsilon^\gamma}} (L_\epsilon f(x) - \Delta f(x))^2\,dV(x)  
+ \int_{\mathcal{M}\backslash\mathcal{M}_{\epsilon^\gamma}} (L_\epsilon f(x) - \Delta f(x))^2\,dV(x)  \notag \leq  C \epsilon^2 + C_2 \epsilon^\gamma,
\EEA
where we have used the fact that $\text{Vol}{\mathcal{M}_{\epsilon^\gamma}}) \leq \text{Vol}(\mathcal{M}) = \mathcal{O}(1)$ and Proposition~\ref{dependence on r} to upper bound $\text{Vol}(\mathcal{M}\backslash \mathcal{M}_{\epsilon^\gamma})$, for some constants $C, C_2$ which are independent of $x$ and $\epsilon$. Thus, we have the two-norm is bounded by $\mathcal{O}(\epsilon^{\frac{\gamma}{2}})$. 
\end{proof}
In the above proof, we see that convergence holds sufficiently far away from the boundary even when $f$ does not satisfy Neumann boundary conditions. This yields the following corollary. 
\begin{corollary}\label{L2boundcorollary}
Fix $0<\gamma<1/2$, and let $\mathcal{M}_{\epsilon^\gamma}$ denote the set of points whose distance from the boundary is at least $\epsilon^\gamma < r_C$. Then for any $f \in C^\infty(\mathcal{M})$,
\BEA
\|L_\epsilon f - \Delta f\|_{L^2(\mathcal{M}_{\epsilon^\gamma})} = \mathcal{O}(\epsilon). \notag
\EEA
Alternatively, if the manifold has no boundary,
\BEA
\|L_\epsilon f - \Delta f\|_{L^2(\mathcal{M})} = \mathcal{O}(\epsilon). \notag
\EEA
\end{corollary}
The following result is obtained by multiple applications of the law of large numbers to the above lemma. 
\begin{lemma}
\label{discrete norm lemma}
Fix $0 < \gamma < 1/2$, and let $f \in C^\infty(\mathcal{M})$ be a Neumann eigenfunction of $\Delta$. For any $\epsilon>0$ such that $\epsilon^\gamma < r_C$, with probability higher than $ 1 - \frac{4}{n^2}$,   
\BEA
\|\widetilde{L}_{\epsilon,n} R_n f - R_n \Delta f\|_{L^2(\mu_n)} = \mathcal{O}\left( \frac{\sqrt{\log n}}{\epsilon^{d/2+1}\sqrt{n}}, \epsilon^{\gamma/2} \right). \notag
\EEA
\end{lemma} 
\noindent{\color{black}Balancing these two error rates, the convergence rate is of order $$\epsilon^{\gamma/2} \propto \left(\frac{\log(n)}{n} \right)^\frac{\gamma}{2d+4+2\gamma}.$$ So, for $\gamma \to 1/2$, the rate is effectively $\epsilon^{1/4} \propto \left(\frac{\log(n)}{n} \right)^{\frac{1}{4d+10}}$.}

\begin{proof}
Consider the following:
\BEA
\|\widetilde{L}_{\epsilon,n} R_n f - R_n \Delta f\|_{L^2(\mu_n)} \leq \|\widetilde{L}_{\epsilon,n} R_n f -  L_{\epsilon,n} R_n  f\|_{L^2(\mu_n)}  +
 \|L_{\epsilon,n} R_n f - R_n L_\epsilon  f\|_{L^2(\mu_n)} + \|R_n L_{\epsilon} f - R_n \Delta f\|_{L^2(\mu_n)}\notag.
\EEA
For the first term, note that in the proof of Lemma \ref{matrix-eigenvalues-supplement}, it was shown that 
$$
\sup_{\|v\|_2 = 1} \|\tilde{L}_{\epsilon,n} v - L_{\epsilon,n} v \|_2 = \mathcal{O}\left(\frac{\sqrt{\log n}}{\epsilon^{d/2+1}\sqrt{n}}\right)
$$
with probability higher than $1 - \frac{1}{n^2}$. Since 
$$
\frac{\|\tilde{L}_{\epsilon,n} v - L_{\epsilon,n} v \|_2}{\|v\|_2} = \frac{\|\tilde{L}_{\epsilon,n} v - L_{\epsilon,n} v \|_{L^2(\mu_n)}}{\|v\|_{L^2(\mu_n)}},
$$
it follows that 
$$
\sup_{\|v\|_{L^2(\mu_n)} = 1} \|\tilde{L}_{\epsilon,n} v - L_{\epsilon,n} v \|_{L^2(\mu_n)} = \mathcal{O}\left(\frac{\sqrt{\log n}}{\epsilon^{d/2+1}\sqrt{n}}\right)
$$
with probability higher than $1 - \frac{1}{n^2}$. Hence,
$$
\|\widetilde{L}_{\epsilon,n} R_n f -  L_{\epsilon,n} R_n  f\|_{L^2(\mu_n)} \leq \|\widetilde{L}_{\epsilon,n} -  L_{\epsilon,n}\|_{2} \| R_n f \|_{L^2(\mu_n)}.
$$
The matrix norm is $\mathcal{O}\left(\frac{\sqrt{\log n}}{\epsilon^{d/2+1}\sqrt{n}}\right)$, by the calculation in Lemma \ref{matrix-eigenvalues-supplement}, while the second term approaches $\|f\|_{L^2(\mathcal{M})}$ as $n \to \infty$, and is hence bounded by a constant for sufficiently large $n$. For the second term, fix an $i$, and note that 
\BEA
\Bigg|\hat{k}_\epsilon(x_i,y) f(y) - \int_{\mathcal{M}} \hat{k}_\epsilon(x_i,z) f(z) dV(z)\Bigg|   \leq\Bigg| \frac{ \epsilon^{-d/2} k_\epsilon(x_i,y)}{d(x_i)} f(y) - \frac{1}{d(x_i)} \int_{\mathcal{M}} \epsilon^{-d/2} k_\epsilon(x_i,z) f(z) dV(z)\Bigg| \notag\hspace{.27cm} \\
+ \Bigg| \epsilon^{-d/2} k_\epsilon(x_i,y)(f/d)(y) - \int_{\mathcal{M}} \epsilon^{-d/2} k_\epsilon(x_i,z) (f/d)(z) dV(z)\Bigg| \notag\\
\leq   \frac{1}{d_{\textup{min}}}\Bigg| \epsilon^{-d/2} k_\epsilon(x_i,y)f(y) - \int_{\mathcal{M}} \epsilon^{-d/2} k_\epsilon(x_i,z) f(z) dV(z)\Bigg| \notag  \hspace{.45cm}\\
+ \Bigg| \epsilon^{-d/2} k_\epsilon(x_i,y)(f/d)(y) - \int_{\mathcal{M}} \epsilon^{-d/2} k_\epsilon(x_i,z) (f/d)(z) dV(z)\Bigg|. \notag \hspace{-.1cm}
\EEA
Hence, by the same Chernoff argument found in the proof of Lemma \ref{matrix-eigenvalues-supplement}, we have that with probability higher than $1 - \frac{1}{n^3}$, 
\BEA
\left| \frac{1}{n} \sum_{j=1}^n \hat{k}_\epsilon(x_i,x_j) f(x_j) - \int \hat{k}_\epsilon(x_i,z) f(z) dV(z)\right| = \mathcal{O}\left( \frac{\sqrt{\log n}}{\epsilon^{d/4} \sqrt{n}}\right). \notag
\EEA
It follows that 
\BEA
|(L_{\epsilon,n} R_n f)_i - L_\epsilon f (x_i) | = \mathcal{O}\left( \frac{\sqrt{\log n}}{\epsilon^{d/4+1} \sqrt{n}}\right). 
\EEA
Using this result for each $i$, we have that with probability higher than $1 - \frac{1}{n^2}$, the second term is $\mathcal{O}\left( \frac{\sqrt{\log n}}{\epsilon^{d/4+1}\sqrt{n}}\right)$. For the third and final term, since $f$ is Neumann, $|L_\epsilon f(x) - \Delta f(x)|^2 - \int_\mathcal{M} |L_\epsilon f - \Delta f|^2 dV =  \mathcal{O}(1)$. And so, using Hoeffding's inequality, with probability higher than $1 - \frac{2}{n^2}$, the third term $\|R_n L_\epsilon f - R_n \Delta f\|_{L^2(\mu_n)}$ term differs from its mean by Monte-Carlo error $\mathcal{O}\left( \sqrt{\log n}/\sqrt{n} \right)$. Hence, by Lemma~\ref{continuous norm lemma}, the error dominating the third term is $\mathcal{O}(\epsilon^{\gamma/2})$. This completes the proof. 
\end{proof}

\begin{remark}\label{improvedrate}
The rate of the above Lemma can be made significantly faster for manifolds without boundary. Particularly, the rate $\epsilon^{\gamma/2}$ is replaced with $\epsilon$. Balancing $\frac{\sqrt{\log n}}{\epsilon^{d/2+1}\sqrt{n}} \propto \epsilon$, we achieve $\epsilon \propto\left( \frac{\log n}{n}\right)^{\frac{1}{d+4}}$.
\end{remark}

With the above results, we are now ready to prove Theorem~\ref{conveigvec}.
Let $C'$ denote the largest of the constants absorbed into the rate of the eigenvalue convergence result. In the proof of the following theorem, it is important that the enumeration of the eigenvalues
\begin{equation*}
    \lambda_1 \leq \lambda_2 \leq \dots \quad \textrm{ and } \quad \tilde{\lambda}^{\epsilon,n}_1 \leq \tilde{\lambda}^{\epsilon,n}_2 \leq \dots \tilde{\lambda}^{\epsilon,n}_n
\end{equation*}
counts the multiplicities. For example, if the first eigenvalue of $\Delta$ has multiplicity $3$, then $\lambda_1 = \lambda_2 = \lambda_3$. The following proof adapts the argument found in \cite{ct:2022} to this setting.

\begin{proof}[Proof of Theorem~\ref{conveigvec}]
Fix any $\ell$. Let $k$ be the geometric multiplicity of the eigenvalue $\lambda_\ell$. There is an $i$ such that $ \lambda_{i+1} =  \lambda_{i+2} = \dots = \lambda_\ell = \dots = \lambda_{i+k}$. Let 
\begin{equation*}
    c_\ell = \frac{1}{2} \textrm{ min} \left\{ |\lambda_{\ell} - \lambda_{i}|, |\lambda_{\ell} - \lambda_{i+k+1}| \right\}.
\end{equation*}
{\color{black}Let $C'$ be the largest constant absorbed in the spectral error bound in Theorem~\ref{spectralconvneumann} such that} $C' \left( \frac{\color{black}\sqrt{\log(n)}}{\epsilon^{d/2+1}n^{1/2}} + \epsilon^{1/2}  \right) < c_\ell$, then with probability higher than $1 - \frac{6}{n^2}$,
\begin{equation*}
    |\tilde{\lambda}^{\epsilon,n}_{i} - \lambda_{i}| < c_\ell , \quad |\tilde{\lambda}^{\epsilon,n}_{i+k+1} - \lambda_{i+k+1}| < c_\ell.
\end{equation*}
Let $\hat{u}_{1}, \dots \hat{u}_{n}$ be an orthonormal basis of $L^2(\mu_n)$ consisting of eigenvectors of $\tilde{L}_{\epsilon,n}$, where $\hat{u}_j$ has eigenvalue $\tilde{\lambda}^{\epsilon,n}_j$.  Let $S$ be the $k$ dimensional subspace of $L^2(\mu_n)$ corresponding to the span of $\{ \hat{u}_{j} \}^{i+k}_{j=i+1}$, and let $P_S$ (resp. $P^\perp_S$) denote the projection onto $S$ (resp. orthogonal complement of $S$). Let $f$ be a norm $1$ eigenfunction of $\Delta$ corresponding to eigenvalue $\lambda_\ell$. Notice that
\begin{equation*}
    P^\perp_S R_n \Delta f = \lambda_\ell P^\perp_S R_n f = \lambda_{\ell} \sum_{j \neq i+1, \dots , i+k} \langle R_n f, \hat{u}_j \rangle_{L^2(\mu_n)} \hat{u}_j.
\end{equation*}
Similarly, 
\begin{equation*}
     P^\perp_S \tilde{L}_{\epsilon,n} R_n f = \sum_{j \neq i+1, \dots , i+k} \tilde{\lambda}^{\epsilon,n}_{j}\langle  R_n f, \hat{u}_j \rangle_{L^2(\mu_n)} \hat{u}_j.
\end{equation*}
Hence,
\BEA
\Bigg\|| P^\perp_S R_n \Delta f - P^\perp_S \tilde{L}_{\epsilon,n} R_n f, R_n \Bigg\|_{L^2(\mu_n)} 
&=& \Bigg\|  \sum_{j \neq i+1, \dots , i+k} (\lambda_\ell - \tilde{\lambda}^{\epsilon,n}_{j} )\langle  R_n f, \hat{u}_j \rangle_{L^2(\mu_n)} \hat{u}_j \Bigg\|_{L^2(\mu_n)} \notag\hspace{2.5cm} \\
&\geq& \textup{min}\left\{ \left| \lambda_\ell - \tilde{\lambda}^{\epsilon,n}_i \right|, \left|\lambda_\ell - \tilde{\lambda}^{\epsilon,n}_{i+k+1}\right|\right\}\Bigg\| \sum_{j \neq i+ 1 , \dots , i + k} \langle R_n f , \hat{u}_j \rangle \hat{u}_j \Bigg\|_{L^2(\mu_n)} \notag \\
&=& \textup{min}\left\{ \left| \lambda_\ell - \tilde{\lambda}^{\epsilon,n}_i \right|, \left|\lambda_\ell - \tilde{\lambda}^{\epsilon,n}_{i+k+1}\right|\right\} \Bigg\| P_S^\perp R_nf \Bigg\|_{L^2(\mu_n)} \notag 
\EEA
But $P^\perp_S$ is an orthogonal projection, so  
\BEA
\textrm{ min} \left\{ |\lambda_\ell - \tilde{\lambda}^{\epsilon,n}_{i}|, |\lambda_\ell - \tilde{\lambda}^{\epsilon,n}_{i+k+1}|  \right\} \| P_S^\perp R_n f \|_{L^2(\mu_n)}  \leq \left\|P^\perp_S R_n \Delta f - P^\perp_S \tilde{L}_{\epsilon,n} R_n f \right\|_{L^2(\mu_n)}   
= \| R_n \Delta f - \tilde{L}_{\epsilon, n} R_n f \|_{L^2(\mu_n)}.\notag
\EEA
Without loss of generality, assume $\textrm{ min} \left\{ |\lambda_\ell - \tilde{\lambda}^{\epsilon,n}_{i}|, |\lambda_\ell - \tilde{\lambda}^{\epsilon,n}_{i+k+1}|  \right\} = |\lambda_\ell - \tilde{\lambda}^{\epsilon,n}_{i}|$. Notice that
\begin{equation*}
    |\lambda_\ell - \tilde{\lambda}^{\epsilon,n}_{i}| \geq \left|  |\lambda_\ell - \lambda_{i}| - |\lambda_{i} - \tilde{\lambda}^{\epsilon,n}_{i}| \right| > c_\ell,
\end{equation*}
by hypothesis. Hence, using Lemma~\ref{discrete norm lemma}, with probability higher than $1-\frac{4}{n^2}$,
\begin{equation*}
    \| P_S^\perp R_n f \|_{L^2(\mu_n)} \leq \frac{1}{c_\ell} \| R_n \Delta f - \tilde{L}_{\epsilon, n} R_n f \|_{L^2(\mu_n)} = \frac{1}{c_\ell}  \mathcal{O}\left(\frac{ \log (n)^{1/2} }{\epsilon^{ \frac{d+2}{2}}n^{1/2}},\epsilon^{\gamma/2} \right),
\end{equation*}
where we have temporarily left the constant $\frac{1}{c_\ell}$ out of $\mathcal{O}$ to emphasize that this is inversely proportional to the spectral gap. Notice that $P_S^\perp R_n f = R_n f - P_S R_n f$. Hence, if $\{f_1, f_2, \dots , f_k \}$ are an orthonormal basis for the eigenspace corresponding to $\lambda_\ell$, applying Lemma \ref{discrete norm lemma} $k$ times, we see that
with probability  $1 - \frac{4k}{n^2}$, 
\begin{equation*}
    \|R_n f_j - P_S R_n f_j \|_{L^2(\mu_n)} = \frac{1}{c_\ell} \mathcal{O}\left(\frac{ \log (n)^{1/2} }{\epsilon^{ \frac{d+2}{2}}n^{1/2}},\epsilon^{\gamma/2} \right),   \qquad \textrm{ for } j = 1, 2, \dots k.\label{L2eigvecerror}
\end{equation*}

Let $C_l$ denote an upper bound on the essential supremum of the eigenvectors $\{f_1, f_2, \dots , f_k \}$. For any $1 \leq i,j \leq k$, 
\begin{equation*}
    \left|f_i(x)f_j(x) - \int_\mathcal{M} f_j(y)f_i(y) d\mu(y)\right| \leq C_l^2(1+\mbox{Vol}(\mathcal{M})).
\end{equation*}
Hence, using Hoeffding's inequality with $\alpha = 2\sqrt{2}C_l {\frac{\log(n)}{\sqrt{n}}}$, with probability $1 - \frac{2}{n^2}$, \begin{equation*}
    \left|\frac{1}{n}\sum_{l=1}^n f_i(x_l)f_j(x_l) - \int_\mathcal{M} f_i(y)f_j(y) d\mu(y)  \right| < \alpha.
\end{equation*}
Since these are orthonormal in $L^2(\mathcal{M})$, by Hoeffding's inequality used $k^2$ times, it is easy to see that with probability $1 - \frac{2k^2}{n^2}$,
\begin{equation*}
    \langle R_nf_i, R_n f_j \rangle_{L^2(\mu_n)} = \delta_{ij} + \mathcal{O}\left( \frac{\sqrt{\log(n)}}{\sqrt{n}} \right). 
\end{equation*}
Combining the above two estimates and absorbing the constant $\frac{1}{c_\ell}$, we see that with a total probability of $1 - \frac{2k}{n^2} - \frac{4k}{n^2} - \frac{6}{n^2}$ we have that
\BEA
\langle P_S R_n f_i , P_S R_n f_j \rangle_{L^2(\mu_n)} &=& \langle R_n f_i , R_n f_j \rangle_{L^2(\mu_n)} + 
\langle R_n f_i - P_S R_n f_i , R_n f_j - P_S R_n f_j \rangle_{L^2(\mu_n)}\notag
\\ &=& \delta_{ij} + \mathcal{O}\left( \frac{\sqrt{\log(n)}}{\sqrt{n}}, \frac{ \log (n) }{\epsilon^{d+2}n},\epsilon^{\gamma}\right),\notag
\EEA
or 
\BEA
\| P_S R_n f_i\|_{L^2(\mu_n)} = \sqrt{1+  \mathcal{O}\left(\frac{ \log (n) }{\epsilon^{d+2}n},\epsilon^\gamma\right)} = 1+ \mathcal{O}\left(\frac{ \log (n) }{\epsilon^{d+2}n} ,\epsilon^\gamma\right),\notag
\EEA
whenever $C' \left( \frac{\sqrt{\log n}}{\epsilon^{d/2+1}n^{1/2}}  + \epsilon^{1/2}  \right) < c_\ell$.
Letting $v_1 = \frac{P_SR_nf_1}{\| P_S R_n f_1 \|_{L^2(\mu_n)}}$, we see that, 
\[\| P_SR_n f_1 - v_1 \|_{L^2(\mu_n)} =  \mathcal{O}\left(\frac{ \log (n) }{\epsilon^{d+2}n} ,\epsilon^{\gamma}\right).\]
 Letting $\tilde{v}_2 = P_SR_nf_2 - \frac{\langle P_SR_nf_1, P_SR_nf_2 \rangle_{L^2(\mu_n)}}{\|P_SR_n f_1 \|^2_{L^2(\mu_n)}} P_SR_nf_1$, and $v_2 = \frac{\tilde{v}_2}{\|\tilde{v}_2\|_{L^2(\mu_n)}}$, it is easy to see that 
\begin{equation*}
\|P_SR_nf_2 - \tilde{v}_2\|_{L^2(\mu_n)} =  \mathcal{O}\left(\frac{ \log (n) }{\epsilon^{d+2}n} ,\epsilon^{\gamma}\right),
\end{equation*}
and hence, 
\begin{equation*}
   \|P_SR_nf_2 - v_2\|_{L^2(\mu_n)} =  \mathcal{O}\left(\frac{ \log (n) }{\epsilon^{d+2}n} ,\epsilon^{\gamma}\right).
\end{equation*}
Continuing this way, we see that the Gram-Schmidt procedure on  $\{ P_SR_nf_j \}^k_{j=1}$ yields an orthonormal set of $k$ vectors $\{ v_j\}_{j=1}^k$ spanning $S$ such that 
\begin{equation*}
    \|P_SR_nf_j - v_j\|_{L^2(\mu_n)} =  \mathcal{O}\left(\frac{ \log (n) }{\epsilon^{d+2}n} ,\epsilon^{\gamma}\right),  \qquad j = 1,2,\dots, k, 
\end{equation*}
and therefore 
\BEA
    \|R_nf_j - v_j\|_{L^2(\mu_n)} \leq \| R_n f_j - P_S R_n f_j \|_{L^2(\mu_n)} + \| P_S R_n f_j - v_j \|_{L^2(\mu_n)} 
    =  \mathcal{O}\left( \frac{ \log (n)^{1/2} }{\epsilon^{ \frac{d+2}{2}}n^{1/2}},\epsilon^{\gamma/2}\right) ,\qquad j = 1,2,\dots, k, \notag
\EEA
where the error is dominated by the first term on the right hand side. Therefore, for any eigenvector $v = \sum_{j=1}^k b_j v_j$ with $L^2(\mu_n)$ norm $1$, notice that $f = \sum_{j=1}^k b_j f_j$ is a $L^2(\mathcal{M})$ norm $1$ eigenfunction of $\Delta$ with eigenvalue $\lambda_l$ satisfying 
\begin{equation*}
    \| R_n f - v \|^2_{L^2(\mu_n)} \leq \sum_{j=1}^k |b_j|^2 \| R_n f_j - v_j \|^2_{L^2(\mu_n)} = \mathcal{O}\left( \frac{ \log(n)}{\epsilon^{d+2}n},\epsilon^{\gamma}\right).  
\end{equation*}
This completes the proof. 
\end{proof}

\section{Proof of Lemma~\ref{truncated matrix eigenvalues}}\label{pfofLemma4.2}

The $(i,j)$-th entry of $L^\gamma_{\epsilon,n} - \tilde{L}^\gamma_{\epsilon,n}$ is given by a constant multiple of 
$$
\frac{\textup{Vol}(\mathcal{M}_{\epsilon^\gamma})}{\epsilon n_1} \left( \hat{k}_\epsilon(x_i,x_j) - \tilde{k}^\gamma_\epsilon(x_i,x_j)\right). 
$$
We begin by estimating the integrals in the denominator  of $\hat{k}_\epsilon$. Since the sampling setup is defined as in Definition~\ref{sampling2domain}, we need to approximate the denominator $d$ separately on integrals over $\mathcal{M}_{\epsilon^\gamma}$ and $\mathcal{M}\backslash\mathcal{M}_{\epsilon^\gamma}$.
First, let $x_j \in X_\gamma^1$, with $1 \leq j \leq n_1$ and consider 
$$
\left| \frac{\textup{Vol}(\mathcal{M}_{\epsilon^\gamma})}{n_1} \sum_{i=1}^{n_1} \epsilon^{-d/2} k_\epsilon(x_j,x_i) - \int_{\mathcal{M}_{\epsilon^\gamma}} \epsilon^{-d/2} k_\epsilon(x_j,y) dV(y) \right|
$$
We use Chernoff bounds. Considering variance over the uniform measure on $\mathcal{M}_{\epsilon^\gamma}$,
\BEA
\mathbb{E}\left[\left(\epsilon^{-d/2}  \textup{Vol}(\mathcal{M}_{\epsilon^\gamma}) k_\epsilon(x_j, \cdot) - \epsilon^{-d/2} \int_{\mathcal{M}_{\epsilon^\gamma}} \textup{Vol}(\mathcal{M}_{\epsilon^\gamma}) k_\epsilon(x_j, y) \frac{dV(y)}{\textup{Vol}(\mathcal{M}_{\epsilon^\gamma})}  \right)^2\right]  \notag \hspace{.5cm}\\
=\int_{\mathcal{M}_{\epsilon^\gamma}} \epsilon^{-d} \textup{Vol}(\mathcal{M}_{\epsilon^\gamma})^2 |k_\epsilon (x_j,y)|^2dV(y) 
- \left( \epsilon^{-d/2}  \int_{\mathcal{M}_{\epsilon^\gamma}} k_\epsilon(x_j,y) dV(y) \right)^2 \notag\hspace{-.5cm}
\EEA
Using the same argument as in Lemma \ref{matrix-eigenvalues-supplement}, and the fact that $\textup{Vol}(\mathcal{M}_{\epsilon^\gamma}) = \mathcal{O}\left(1\right)$
, we see that 
\BEA
\mathbb{P}\left(  \left| \frac{1}{n_1} \sum_{i=1}^{n_1} \epsilon^{-d/2} k_\epsilon(x_j,x_i) - \epsilon^{-d/2}\int_{\mathcal{M}_{\epsilon^\gamma}} k(x_j,y) dV(y)  \right| > \delta \right) \leq  \exp\left( - \frac{\delta^2 n}{4 C \epsilon^{-d/2}} \right) \notag.
\EEA
Balancing by choosing $\delta = \frac{ 2 \sqrt{C} \log(n_0+n_1)^{1/2}}{ \epsilon^{d/4} \sqrt{n_1}}$, where $C$ denotes an upperbound for $ m_0^{\epsilon/2} \textup{Vol}(\mathcal{M}_{\epsilon^\gamma})$ independent of $\epsilon$ and $\gamma$, we have that with probability higher than $1 - \frac{1}{(n_1 + n_0)^3}$, 
\BEA
\left| \frac{1}{n_1} \sum_{i=1}^{n_1} \epsilon^{-d/2} \textup{Vol}(\mathcal{M}_{\epsilon^\gamma}) k_\epsilon(x_j,x_i) - \int_{\mathcal{M}_{\epsilon^\gamma}} \epsilon^{-d/2} k_\epsilon(x_j,y) dV(y) \right| \notag = \mathcal{O}\left( \frac{\log(n_1+n_0)^{1/2}}{ \epsilon^{d/4} \sqrt{n_1}} \right). \notag\hspace{0cm}
\EEA
Using this for all points $x_j \in X_{\gamma}^1$, we have that with probability higher than $1 - \frac{n_1}{(n_1+n_0)^3}$, 
\BEA
\left| \frac{1}{n_1} \sum_{i=1}^{n_1} \epsilon^{-d/2} \textup{Vol}(\mathcal{M}_{\epsilon^\gamma}) k_\epsilon(x_j,x_i) - \int_{\mathcal{M}_{\epsilon^\gamma}} \epsilon^{-d/2} k_\epsilon(x_j,y) dV(y) \right| = \mathcal{O}\left( \frac{\log(n_0 + n_1)^{1/2}}{ \epsilon^{d/4} \sqrt{n_1}} \right),\notag
\EEA
for $j = 1, 2, \dots , n_1.$ Using the same argument over the $n_0$ points sampled uniformly from $X^0_\gamma \subset \mathcal{M} \setminus \mathcal{M}_{\epsilon^\gamma}$, we have that
\BEA 
\Bigg| \frac{1}{n_0} \sum_{i=n_1 + 1}^{n_1 + n_0} \epsilon^{-d/2} \textup{Vol}(\mathcal{M} \setminus \mathcal{M}_{\epsilon^\gamma}) k_\epsilon(x_j,x_i) - \int_{\mathcal{M} \setminus \mathcal{M}_{\epsilon^\gamma}} \epsilon^{-d/2} k_\epsilon(x_j,y) dV(y) \Bigg| \
 = \mathcal{O}\left( \frac{ \sqrt{\epsilon^\gamma} \log(n_0 + n_1)^{1/2}}{ \epsilon^{d/4}\sqrt{n_0}} \right), \notag 
\EEA
for $j = 1, \dots , n_1$ with probability higher than $1 - \frac{n_1}{(n_0 + n_1)^3}$, where the $\sqrt{\epsilon^\gamma}$ factor comes from the fact that $\textup{Vol}(\mathcal{M} \setminus \mathcal{M}_{\epsilon^\gamma}) = \mathcal{O}\left(\epsilon^\gamma\right)$.  
Returning to the entries of $(L^\gamma_{\epsilon,n} - \tilde{L}^\gamma_{\epsilon,n})$, we have that $\frac{\textup{Vol}(\mathcal{M}_{\epsilon^\gamma})}{\epsilon n_1} ( \hat{k}_\epsilon (x_i,x_j) - \tilde{k}^\gamma_\epsilon (x_i,x_j))$ can be simplified to 
\BEA 
 \frac{\textup{Vol}(\mathcal{M}_{\epsilon^\gamma})}{\epsilon n_1} \left( \frac{ \epsilon^{-d/2} k_\epsilon(x_i,x_j) \left( \tilde{d}^{\gamma}(x_i) + \tilde{d}^\gamma(x_j) - d(x_i) - d(x_j) \right) }{2(d(x_i) + d(x_j))(\tilde{d}^\gamma(x_i) + \tilde{d}^\gamma(x_j))}\right). \notag
\EEA
The terms in the denominator are bounded away from $0$ by a constant, and the difference of $d, \tilde{d}^\gamma$ terms in the numerator are $\mathcal{O}\left( \frac{  \sqrt{\epsilon^\gamma} \log(n_0 + n_1)^{1/2} }{\epsilon^{d/4} \sqrt{n_0}}, \frac{ \log(n_0 + n_1)^{1/2} }{\epsilon^{d/4} \sqrt{n_1}} \right)$. Hence, the above is bounded by  
$$
\left(\frac{C_1 \log(n_0 + n_1)^{1/2}}{\epsilon \epsilon^{d/4} n_1^{1/2}} + \frac{C_2 \sqrt{\epsilon^\gamma} \log(n_0 + n_1)^{1/2}}{\epsilon \epsilon^{d/4} n_0^{1/2}} \right) \left(  \frac{\epsilon^{-d/2} \textup{Vol}(\mathcal{M}_{\epsilon^\gamma}) k_\epsilon(x_i,x_j)}{n_1} \right),
$$
for some constants $C_1,C_2>0$. The same calculation as in Lemma \ref{matrix-eigenvalues-supplement} shows that 
\BEA
\sup_{\|v\|_2 = 1} \| L_{\epsilon,n} v - \tilde{L}_{\epsilon,n} v \|_2 = \mathcal{O}\left(\frac{ \log(n_0 + n_1)^{1/2}}{ \epsilon^{d/2+1} n_1^{1/2}}, \frac{ \sqrt{\epsilon^\gamma} \log(n_0 + n_1)^{1/2}}{\epsilon^{d/2+1} n_0^{1/2}} \right) \notag
\EEA
The desired result then follows from Weyl's inequality.

\section{Convergence of Dirichlet Eigenvectors}\label{sm:conveigvec} \noindent

To see the convergence of Dirichlet eigenvectors, we require a result analogous to Lemma \ref{discrete norm lemma} for the truncated graph Laplacian. In this case, eigenvector error is measured only on those points sufficiently far away from the boundary, as this is where the truncated graph Laplacian is defined. Such a result is obtained by law of large numbers results applied to Corollary~\ref{L2boundcorollary}.
\begin{lemma}
\label{truncated discrete norm}
Suppose $n_1$ points are sampled uniformly from $\mathcal{M}_{\epsilon^\gamma}$, and $n_0$ points are sampled uniformly from $\mathcal{M} \setminus \mathcal{M}_{\epsilon^\gamma}$. Denote by $\| \cdot \|_{L^2(\mu_n, \gamma)}$ the norm induced by innerproduct on the points whose distance from the boundary are at least $\epsilon^\gamma$ defined as,
$$\langle R_n f, R_n \phi \rangle_{L^2(\mu_n,\gamma)} = \frac{1}{n} \sum_{j=1}^{n_1} f(x_j) \phi(x_j).$$ Let $f \in C^\infty(\mathcal{M})$, then with probability higher than $1 - \frac{2}{n_1^2} - \frac{2n_1}{(n_0 + n_1)^3}$, 
$$
\|\tilde{L}^\gamma_{\epsilon,n} R_n f - R_n \Delta f \|_{L^2(\mu_n,\gamma)} = \mathcal{O}\left(\frac{\sqrt{\log(n_0 + n_1)}}{\epsilon^{d/2+1} \sqrt{n_1}}, \frac{ \sqrt{\epsilon^\gamma} \sqrt{\log(n_0 + n_1)}}{\epsilon^{d/2 + 1}\sqrt{n_0}}, \epsilon \right). 
$$
\end{lemma}
\begin{proof}
Consider the following:
\BEA
\|\widetilde{L}^\gamma_{\epsilon,n} R_n f - R_n \Delta f\|_{L^2(\mu_{n},\gamma)} \leq \|\widetilde{L}^\gamma_{\epsilon,n} R_n f -  L^\gamma_{\epsilon,n} R_n  f\|_{L^2(\mu_n,\gamma)} 
+ \|L^\gamma_{\epsilon,n} R_n f - R_n L_\epsilon  f\|_{L^2(\mu_n,\gamma)} + \|R_n L_{\epsilon} f - R_n \Delta f\|_{L^2(\mu_n,\gamma)}\notag.
\EEA
The last two terms are shown to be small following the exact same argument in Lemma \ref{discrete norm lemma} on those $n_1$ points sampled uniformly away from the boundary. In this case, the mean of the random variable associated with the last term is $\mathcal{O}(\epsilon)$, by Corollary~\ref{L2boundcorollary}, and both concentrations hold simultaneously with probability higher than $1 - \frac{2}{n^2_1}$. An upper bound for the first term was shown in the proof of Lemma  \ref{truncated matrix eigenvalues}, which is $\mathcal{O}\left(\frac{\sqrt{\log(n_0 + n_1)}}{\epsilon^{d/2+1} \sqrt{n_1}}, \frac{ \sqrt{\epsilon^\gamma} \sqrt{\log(n_0 + n_1)}}{\epsilon^{d/2 + 1}\sqrt{n_0}} \right)$ with probability higher than $1 - \frac{2n_1}{(n_0 + n_1)^3}$. 
This completes the proof. 
\end{proof}
The convergence of Dirichlet eigenvector result now follows the exact same proof of Theorem~\ref{conveigvec}, with Lemma \ref{discrete norm lemma} replaced by Lemma \ref{truncated discrete norm}, and Theorem \ref{spectralconvneumann} replaced with Theorem \ref{spectralconvdiri uniform}, using the same reasoning as before to convert independent samples from $\mathcal{M}_{\epsilon^\gamma}$ and $\mathcal{M}\setminus \mathcal{M}_{\epsilon^\gamma}$ to a uniform setup. We state the end result here for convenience. 
\begin{theorem}
\label{conveigvec-dirichlet-supplement}
Let $\Delta$ denote the Dirichlet Laplacian, and  fix $0 < \gamma < 1/2$. For any $\ell$, there is a constant $c_\ell$ such that if $C'\left(\epsilon^{1/2} + \epsilon^{3 \gamma -1 } + \frac{\sqrt{ \log n}}{\epsilon^{d/2 + 1}\sqrt{n}} \right) < c_\ell$, then with probability higher than $1 - \frac{2k^2 + 4k + 12}{n^2} -  \frac{8(2 \sqrt{n} log(n) + 2n)}{n^3}$, for any normalized eigenvector $u$ of $\widetilde{L}^\gamma_{\epsilon,n}$ with eigenvalue $\widetilde{\lambda}_\ell^{\gamma,\epsilon,n}$, there is a normalized eigenfunction $f$ of $\Delta$ with eigenvalue $\lambda_\ell$ such that 
\begin{eqnarray}
\| R_n f - u \|_{L^2(\mu_n,\gamma)} = \mathcal{O}\left( \frac{\sqrt{\log n}}{\epsilon^{d/2 + 1}\sqrt{n}}, \epsilon \right)\label{eigvecderr}
\end{eqnarray}
as $\epsilon \to 0$ after $n \to \infty$. 
\end{theorem}

{\color{black}
\begin{remark}\label{remonDir} This result suggests that the eigenvector error rate is faster than that in the Neumann case as reported in Theorem~\ref{conveigvec}. This is not surprising because the $L^2$-norm is computed over data points that are $\epsilon^\gamma$ away from the boundary, where the dominating error is of order-$\epsilon$ as reported in Corollary~\ref{L2boundcorollary}. Based on the error in the eigenvalues (see Remark~\ref{valid parameter regimes}), $\epsilon^{1/2} \sim \left(n^{-1}\log n\right)^{\frac{1}{2d+6}}$, which is slower than $\epsilon$ obtained by balancing the two terms in \eqref{eigvecderr}. Plugging this rate into the two error terms in \eqref{eigvecderr}, the dominating (or slowest) rate is the first term, namely of order $(n^{-1}\log n)^{\frac{1}{2d+6}}$. This result justifies the empirical results observed in Figures~\ref{fig2}, \ref{fig3}, and \ref{fig4}, that the square of the $L^2$-errors of eigenvectors defined in \eqref{empiricalerror} is two times faster than the convergence rate of the eigenvalues.   
\end{remark}
}

\section{Generalization to Non-Uniform Sampling}\label{sec5} 

In this section, we generalize our results to non-uniform sampling data. In \cite{vaughn2024diffusion},\cite{v:2020}, it is shown that when the data is sampled non-uniformly (i.e., from some smooth density function $q(x)$ nonvanishing on $\mathcal{M}$ that is absolutely continuous w.r.t volume measure), defining
\BEA
K_{q,\epsilon} f (x) = \epsilon^{-d/2} \int_\mathcal{M} k_\epsilon (x,y) f(y) q(y) dV(y), \label{Kqe}
\EEA
then the unnormalized graph Laplacian construction converges to the weighted, weak form of the Laplacian, 
\BEA
\frac{2}{m_2 \epsilon^{d/2+1}} \int_\mathcal{M} \phi(x) \left((K_{q,\epsilon} 1)(x)f(x) - K_{q,\epsilon}f \right)q(x)dV(x) \ = \int_\mathcal{M}  (\nabla \phi \cdot \nabla f)  q^2 dV 
+ \mathcal{O}\left(\epsilon^{1/2} \right).\notag
\EEA

In this paper, we will consider a symmetrization of the normalized graph Laplacian formulation, generalizing Lemma~\ref{weak convergence}. Particularly, we perform normalizations on the matrix to eliminate the effect of sampling density. Based on the Theorem~4.7 in  \cite{vaughn2024diffusion}, the operator in Equation  \eqref{Kqe} can be expanded as follows:
\BEA
K_{q,\epsilon} f(x) &=& m_0^\partial(x) f(x)q(x) + \epsilon^{1/2}m_1^\partial(x)\Big( \frac{\partial (f(x)q(x))}{\partial \eta_x}+\frac{d-1}{2}H(x)f(x)q(x)\Big) \notag \\ &+& \epsilon \frac{m_2}{2} \left(\omega(x)f(x)q(x) + \Delta (f(x)q(x))+ \Big(\frac{m_2^\partial(x)}{m_2}-1\Big)\frac{\partial^2}{\partial\eta_x^2}(f(x)q(x)) \right) + \mathcal{O}(\epsilon^{3/2}). \notag
\EEA
Applying the above formula to the constant function $1$, we obtain 
\BEA
K_{q,\epsilon}   1(x) = m_0^\partial(x)q(x) + \epsilon^{1/2}m_1^\partial(x) \frac{d-1}{2}H(x)q(x) + \epsilon \frac{m_2}{2} \left(\omega(x)q(x)+{\Big(\frac{m_2^\partial(x)}{m_2}-1\Big)\frac{\partial^2}{\partial\eta_x^2}q(x)}\right) +\mathcal{O}(\epsilon^{3/2}).\notag
\EEA
Therefore, we have 
\BEA
q_{\epsilon}(x) &:=& \frac{K_{q,\epsilon}   1(x)}{m_0^\partial(x)} 
=  q(x) + \epsilon^{1/2}m_1^\partial(x) \frac{d-1}{2}H(x)q(x)(m_0^\partial(x))^{-1}  
\notag \\ &+& \epsilon \frac{m_2}{2} \left(\omega(x)q(x) +{\Big(\frac{m_2^\partial(x)}{m_2}-1\Big)\frac{\partial^2}{\partial\eta_x^2}q(x)}\right)(m_0^\partial(x))^{-1} +\mathcal{O}(\epsilon^{3/2}). \notag
\EEA
as an estimator of the density $q$.
Notice that 
\BEA
 f(x) q_\epsilon(x)^{-1} = f(x)q(x)^{-1} (1-\epsilon^{1/2}D(x)+\epsilon (-E(x)+D^2(x)) + \mathcal{O}(\epsilon^{3/2})) \notag
\EEA
where $D(x) =  m_1^\partial(x) \frac{d-1}{2}H(x)(m_0^\partial(x))^{-1} $ and $$E(x) =\frac{m_2}{2} \left(\omega(x)+{\Big(\frac{m_2^\partial(x)}{m_2}-1\Big)q^{-1}(x)\frac{\partial^2}{\partial\eta_x^2}q(x)}\right)(m_0^\partial(x))^{-1} $$. We therefore have that 
\BEA
K_{q,\epsilon}(f(x) q_\epsilon(x)^{-1} ) = K_\epsilon f(x) - \epsilon^{1/2}K_{\epsilon} (f(x)D(x))+\epsilon K_\epsilon(-f(x)E(x)+f(x)D^2(x)) + \mathcal{O}(\epsilon^{3/2}). \notag
\EEA
Plugging in $f(x) = 1(x)$, we obtain
\BEA
K_{q,\epsilon}(q_\epsilon(x)^{-1} ) = K_\epsilon 1(x) - \epsilon^{1/2}K_{\epsilon} (D(x)) + \epsilon K_\epsilon(-E(x)+D^2(x)) +\mathcal{O}(\epsilon^{3/2}) \notag. 
\EEA
Using the formula 
\BEA
\frac{A + B\epsilon^{1/2} + C\epsilon + \mathcal{O}(\epsilon^{3/2})}{H + F\epsilon^{1/2} + G\epsilon + \mathcal{O}(\epsilon^{3/2})} =  \frac{A}{H} + \frac{HB - AF}{H^2}\epsilon^{1/2} + \frac{CH - AG - F(HB - AF)}{H^3}\epsilon  + \mathcal{O}(\epsilon^{3/2}), \notag
\EEA
one obtains the following expression:
 \BEA 
   \frac{K_{q,\epsilon}(f(x) q_\epsilon(x)^{-1} )}{K_{q,\epsilon}(q_\epsilon(x)^{-1} )}  = \frac{K_\epsilon(f)}{K_\epsilon(1)} + \epsilon^{1/2}\left( \frac{-K_\epsilon 1(x)K_\epsilon(f(x)D(x))+ K_\epsilon f(x)K_\epsilon D(x)  }{K_\epsilon 1(x)^2} \right) + \frac{\epsilon}{K_\epsilon 1(x)^3} P(x), \label{expand0} \notag
 \EEA
 where 
 \BEA
P(x) &=&  -K_\epsilon 1(x) K_\epsilon(f(x)E(x))  + K_\epsilon 1(x) K_\epsilon(f(x)D^2(x))  + K_\epsilon f(x)K_\epsilon (E(x)) -K_\epsilon f(x)K_\epsilon (D^2(x)) \notag \\ &+& K_\epsilon(D(x))\left( -K_\epsilon 1(x) K_\epsilon (f(x)D(x)) + K_\epsilon (f(x)) K_\epsilon(D(x)) \right) = \mathcal{O}(\epsilon^{1/2}),\notag
\EEA
since the $\mathcal{O}\left(1\right)$ terms of $P$ cancel. Therefore, 
\BEA
\frac{\epsilon}{K_\epsilon 1(x)^3} P(x)  = \mathcal{O}\left( \epsilon^{3/2} \right). \notag
\EEA
Looking now at the terms
\BEA
\left( \frac{-K_\epsilon 1(x)K_\epsilon(f(x)D(x))+ K_\epsilon f(x)K_\epsilon D(x)  }{K_\epsilon 1(x)^2} \right), \notag
\EEA
we see again that the $\mathcal{O}\left( 1 \right)$ terms cancel. For the $\epsilon^{1/2}$ terms, we are left with 
\BEA
\frac{-K_\epsilon 1(x)K_\epsilon(f(x)D(x))+ K_\epsilon f(x)K_\epsilon D(x)  }{K_\epsilon 1(x)^2} = \frac{1}{K_\epsilon 1(x)^2} \left( \epsilon^{1/2} R + \mathcal{O}\left( \epsilon \right) \right). \notag
\EEA
where 
\BEA
R = m_0^\partial m_1^\partial \left(-\frac{\partial (Df)}{\partial \eta_x} + f \frac{\partial D}{\partial \eta_x} + D \frac{\partial f }{\partial \eta_x}\right). \notag
\EEA
Using product rule on the inside of the above, we see that this term cancels. Hence, 
\BEA
\epsilon^{1/2} \left( \frac{-K_\epsilon 1(x)K_\epsilon(f(x)D(x))+ K_\epsilon f(x)K_\epsilon D(x)  }{K_\epsilon 1(x)^2} \right) = \mathcal{O}\left( \epsilon^{3/2} \right).\notag
\EEA
Putting together the information above, we conclude
\BEA
\label{q pointwise}
\frac{K_{q,\epsilon}(f(x) q_\epsilon(x)^{-1} )}{K_{q,\epsilon}(q_\epsilon(x)^{-1} )} = \frac{K_\epsilon(f)}{K_\epsilon(1)} + \mathcal{O}(\epsilon^{3/2}).
\EEA
 We now define
\BEA
L_{\textrm{rw}, q, \epsilon} f (x) : = \frac{1}{\epsilon} \left(f(x) - \frac{K_{q,\epsilon}(f q_\epsilon^{-1} )(x)}{K_{q,\epsilon}(q_\epsilon^{-1} )(x)} \right), \label{intop_L_rwqe}
\EEA 
as well as 
\BEA
L_{q,\epsilon} f = \frac{1}{2} \left( L_{\textrm{rw}, q, \epsilon}f + L^*_{\textrm{rw}, q, \epsilon}f  \right),\label{intop_L_qe}
\EEA
where $L^*_{\textrm{rw}, q, \epsilon}$ denotes the adjoint of  $L_{\textrm{rw}, q, \epsilon}f $ with respect to $L^2(\mathcal{M})$. In particular, one can verify that,
\[L_{q,\epsilon}f(x) = \frac{1}{\epsilon} \Big(f(x) - \int_{\mathcal{M}}\hat{k}_{q,\epsilon}(x,y)f(y)dV(y)\Big),\]
where
\BEA
\hat{k}_{q,\epsilon}(x,y) = \frac{\epsilon^{-d/2}}{2}k_\epsilon (x,y)\left( \frac{ q(y) q^{-1}_\epsilon(y)}{K_{q,\epsilon}(q_\epsilon(x)^{-1} )} + \frac{q^{-1}_\epsilon(x)q(x)}{K_{q,\epsilon}(q_\epsilon(y)^{-1})} \right).\label{khat_qe}
 \EEA
Combining the above observations, we have the following result.
\begin{lemma}
\label{nonuniform weak}
for $f \in C^\infty(\mathcal{M})$, 
\BEA
L_{\textup{rw},q, \epsilon } f (x) = L_{\textup{rw},\epsilon} f (x) + \mathcal{O}\left(\epsilon^{1/2}\right), \notag
\EEA
as $\epsilon\to 0$. Moreover, for $f,\phi \in C^\infty(\mathcal{M})$, we have 
\BEA
\langle L_{\textup{rw},q,\epsilon}f, \phi \rangle_{L^2(\mathcal{M})} =  \langle L_{\textup{rw},\epsilon}f, \phi \rangle_{L^2(\mathcal{M})} + \mathcal{O}\left( \epsilon^{1/2} \right) = \int_\mathcal{M} \nabla f \cdot \nabla \phi dV + \mathcal{O}\left( \epsilon^{1/2} \right), \notag
\EEA
as well as 
\BEA
\langle L_{q,\epsilon}f, \phi \rangle_{L^2(\mathcal{M})} =  \langle L_{\epsilon}f, \phi \rangle_{L^2(\mathcal{M})} + \mathcal{O}\left( \epsilon^{1/2} \right) = \int_\mathcal{M} \nabla f \cdot \nabla \phi dV + \mathcal{O}\left( \epsilon^{1/2} \right), \notag 
\EEA
as $\epsilon\to 0$.
\end{lemma}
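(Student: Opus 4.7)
The plan is to treat the lemma as a direct corollary of the asymptotic expansion just carried out, combined with the random-walk weak convergence result already invoked in the proof of Lemma~\ref{weak convergence}.

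For the pointwise claim, I would simply record that the calculation immediately preceding the lemma statement has established
\[
\frac{K_{q,\epsilon}(f(x) q_\epsilon(x)^{-1})}{K_{q,\epsilon}(q_\epsilon(x)^{-1})} = \frac{K_\epsilon f(x)}{K_\epsilon 1(x)} + \mathcal{O}(\epsilon^{3/2}),
\]
uniformly in $x$ thanks to compactness of $\mathcal{M}$, strict positivity of $q$, non-vanishing of $m_0^\partial$, and the smoothness assumptions on $f$. Subtracting both sides from $f(x)$ and multiplying through by $\tfrac{2}{m_2\epsilon}$ converts the $\mathcal{O}(\epsilon^{3/2})$ remainder into $\mathcal{O}(\epsilon^{1/2})$ and produces exactly $L_{\textrm{rw},q,\epsilon} f(x) = L_{\textrm{rw},\epsilon} f(x) + \mathcal{O}(\epsilon^{1/2})$, which is the first assertion.

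For the weak-form assertion involving $L_{\textrm{rw},q,\epsilon}$, I would integrate the pointwise estimate against $\phi \in C^\infty(\mathcal{M})$. Since the remainder is uniform in $x$, Cauchy--Schwarz (or just integrating a uniform bound over the compact manifold) gives
\[
\langle L_{\textrm{rw},q,\epsilon} f, \phi \rangle_{L^2(\mathcal{M})} = \langle L_{\textrm{rw},\epsilon} f, \phi \rangle_{L^2(\mathcal{M})} + \mathcal{O}(\epsilon^{1/2}).
\]
I would then quote the random-walk weak convergence of \cite{vaughn2019diffusion,vaughn2020diffusion}, which was already used at the top of the proof of Lemma~\ref{weak convergence}, to replace the right-hand side by $\int_\mathcal{M} \nabla f \cdot \nabla \phi \, dV + \mathcal{O}(\epsilon^{1/2})$.

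For the symmetrized operator $L_{q,\epsilon} = \tfrac{1}{2}(L_{\textrm{rw},q,\epsilon} + L_{\textrm{rw},q,\epsilon}^*)$, I would use self-adjointness of the $L^2(\mathcal{M})$ pairing and the previous step to write
\[
\langle L_{q,\epsilon} f, \phi \rangle_{L^2(\mathcal{M})} = \tfrac{1}{2}\Big(\langle L_{\textrm{rw},q,\epsilon} f, \phi \rangle_{L^2(\mathcal{M})} + \langle f, L_{\textrm{rw},q,\epsilon} \phi \rangle_{L^2(\mathcal{M})}\Big),
\]
apply the weak-form estimate to each piece, and use symmetry of the Dirichlet form $\int_\mathcal{M} \nabla f \cdot \nabla \phi \, dV$ in $f$ and $\phi$ to conclude. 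The only genuinely delicate point — and really the only place where the proof could slip — is the verification that the $\mathcal{O}$-constants in the preceding expansion are uniform in $x$ up to and including the boundary. This reduces to checking that $q_\epsilon$ is bounded below independently of $\epsilon$, which follows from $q > 0$ on the compact $\mathcal{M}$ together with the continuity of $m_0^\partial$ on $\mathcal{M}$, so that the Taylor-in-$\epsilon^{1/2}$ expansions of $1/q_\epsilon$ and of the ratio $K_{q,\epsilon}(f q_\epsilon^{-1})/K_{q,\epsilon}(q_\epsilon^{-1})$ remain valid with remainders controlled by $C^3$-seminorms of $f$ and $q$. Given that uniformity, everything else is bookkeeping.
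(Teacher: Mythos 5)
Your proposal matches the paper's (essentially implicit) proof: the paper derives the expansion $K_{q,\epsilon}(fq_\epsilon^{-1})/K_{q,\epsilon}(q_\epsilon^{-1}) = K_\epsilon f/K_\epsilon 1 + \mathcal{O}(\epsilon^{3/2})$ in the text immediately preceding the lemma and then simply states "combining the above observations, we have the following result," so your steps — dividing by $m_2\epsilon/2$, integrating against $\phi$, invoking the weak convergence of $L_{\textrm{rw},\epsilon}$ from \cite{vaughn2019diffusion,vaughn2020diffusion}, and handling $L_{q,\epsilon}$ via the adjoint exactly as in Lemma~\ref{weak convergence} — are precisely the intended argument. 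Your explicit attention to uniformity of the remainder in $x$ is, if anything, more careful than the paper itself.
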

This result is analogous to Lemma~\ref{weak convergence}, which is needed to prove the spectral convergence.

\subsection{Discretization} \noindent 
In the previous discussion, we denoted $\mu_n$ as the empirical measure corresponding to uniformly sampled data (see Equation \eqref{empiricalmeasure}). In general, the data need not be restricted to uniformly sampled distribution. In the following discussion, we assume that $x_i\sim \mu$, where $d\mu = q dV$, and the same empirical measure $\mu_n$ in Equation \eqref{empiricalmeasure} corresponds to data sampled from $\mu$. 

For simplicity, we assume that $m_0^\partial$ is known. Methods of approximating $m^\partial_0$ can be found in \cite{bs:2017},\cite{vaughn2024diffusion}. 
Define 
\BEA
q_{\epsilon,n}(x_i) : = \frac{ \frac{1}{n} \sum_{j=1}^n \epsilon^{-d/2} k(x_i,x_j)}{m^\partial_0(x_i)}. \notag
\EEA
The following lemma is an immediate consequence of Hoeffding's inequality. 
\begin{lemma}
\label{q estimate} With probability $1 - \frac{2}{n^2}$, for all $1 \leq i \leq n$ and any fixed $\epsilon>0$, we have
\BEA
\label{qen to qe}
q_{\epsilon,n}(x_i) = q_\epsilon(x_i) + \mathcal{O}\left(\frac{ \sqrt{\log n}}{\epsilon^{d/2} \sqrt{n}} \right),
\EEA
as $n\to\infty$, and hence
\BEA
\label{qen to q}
q_{\epsilon,n}(x_i) = q(x_i) + \mathcal{O}\left(\frac{\sqrt{\log n}}{\epsilon^{d/2} \sqrt{n}} , \epsilon^{1/2} \right),
\EEA
as $\epsilon\to\infty$ after $n\to\infty$.
\end{lemma}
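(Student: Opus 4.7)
The plan is to apply Hoeffding's inequality pointwise and then take a union bound over the $n$ sample points. For any fixed $x_i$, conditionally on $x_i$, the quantity $\frac{1}{n}\sum_{j=1}^n \epsilon^{-d/2}k_\epsilon(x_i,x_j)$ is an empirical mean of i.i.d.\ bounded random variables $\epsilon^{-d/2}k_\epsilon(x_i,x_j)$, whose common expectation (with respect to $x_j \sim \mu$) is
\[
\mathbb{E}\big[\epsilon^{-d/2}k_\epsilon(x_i,x_j)\big] = \epsilon^{-d/2}\int_{\mathcal{M}} k_\epsilon(x_i,y) q(y)\, dV(y) = K_{q,\epsilon} 1(x_i).
\]
Because $k_\epsilon$ has essential supremum $\kappa$ independent of $\epsilon$, each summand is bounded in absolute value by $\kappa\epsilon^{-d/2}$.

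Hoeffding's inequality then yields, for any $\alpha > 0$,
\[
P\Big( \Big| \tfrac{1}{n}\sum_{j=1}^n \epsilon^{-d/2}k_\epsilon(x_i,x_j) - K_{q,\epsilon}1(x_i) \Big| \geq \alpha \Big) \leq 2\exp\!\Big(-\tfrac{n\alpha^2\epsilon^d}{2\kappa^2}\Big).
\]
Choosing $\alpha = \kappa\sqrt{6\log(n)/(n\epsilon^d)}$ bounds this probability by $2/n^3$, and a union bound over $i = 1,\dots,n$ gives the desired $1 - 2/n^2$ confidence. Dividing by $m_0^\partial(x_i)$, which is bounded below by a positive constant on the compact manifold $\mathcal{M}$, and absorbing the $\sqrt{\log n}$ factor as is done elsewhere in the paper, establishes the first assertion \eqref{qen to qe}.

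The second assertion \eqref{qen to q} follows by combining \eqref{qen to qe} with the asymptotic expansion $q_\epsilon(x) = q(x) + \mathcal{O}(\epsilon^{1/2})$ derived immediately before the statement of the lemma, via the triangle inequality. I do not anticipate any substantive obstacle here; the only real bookkeeping is tracking the $\epsilon^{-d/2}$ factor coming from the kernel's essential supremum when applying Hoeffding, which is what produces the $\epsilon^{-d/2}$ in the Monte Carlo error rate, exactly mirroring the argument used in Lemmas~\ref{adapted rbv} and \ref{matrix eigenvalues}.
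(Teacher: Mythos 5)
Your proposal is correct and is essentially the argument the paper intends: the paper simply asserts the lemma is "an immediate consequence of Hoeffding's inequality," and you have filled in exactly that argument (pointwise Hoeffding with summands bounded by $\kappa\epsilon^{-d/2}$, a union bound over the $n$ points, division by the bounded-below normalization $m_0^\partial$, and the expansion $q_\epsilon = q + \mathcal{O}(\epsilon^{1/2})$ for the second claim). The only cosmetic point is the diagonal term $j=i$ in the empirical sum, which contributes $\mathcal{O}(\epsilon^{-d/2}/n)$ and is dominated by the stated rate, consistent with how the paper treats such terms elsewhere.
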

Define the matrix 
\BEA
 \qquad \quad \tilde{L}_{\textrm{rw},q,\epsilon,n} u(x_i) = \frac{1}{\epsilon} \left( u(x_i) - \frac{  \frac{1}{n} \sum_{j=1}^n \epsilon^{-d/2} k_\epsilon (x_i,x_j) u(x_j) q^{-1}_{\epsilon,n}(x_j) }{ \frac{1}{n} \sum_{k=1}^n \epsilon^{-d/2} k_\epsilon(x_i,x_k) q^{-1}_{\epsilon,n}(x_k) } \right),\label{matrix_L_rwqen}
\EEA
 as a Monte-Carlo approximation to Equation \eqref{intop_L_rwqe} with $x_i\sim \mu$. For the discussion below, we also consider a weighted empirical measure that approximates the uniform measure,
\[   \nu_{n} : = \frac{1}{n} \sum_{i=1}^n \frac{\delta_{x_i}}{q(x_i)}.\]
Since $q$ is unknown and will be empirically estimated by $q_{\epsilon,n}$, we will consider the empirical measure weighted as follows
\[   \nu_{\epsilon,n} : = \frac{1}{n} \sum_{i=1}^n \frac{\delta_{x_i}}{q_{\epsilon,n}(x_i)},\]
and let the operator $\tilde{L}_{\textrm{rw},q,\epsilon,n}$ acting between the innerproduct space $ L^2(\nu_{\epsilon,n}) \to L^2(\nu_{\epsilon,n})$, with
\BEA
\langle u,v \rangle_{L^2(\nu_{\epsilon,n})} : = \frac{1}{n} \sum_{i=1}^n \frac{u(x_i)v(x_i)}{q_{\epsilon,n}(x_i)}.\notag
\EEA
Following the uniformly sampled case, we also define the symmetric matrix,
\BEA
\tilde{L}_{q,\epsilon,n} = \frac{1}{2} \left( \tilde{L}_{\textrm{rw},q,\epsilon,n} + \tilde{L}^*_{\textrm{rw},q,\epsilon,n} \right),\label{mat_L_qen}
\EEA
where the adjoint $ \tilde{L}^*_{\textrm{rw},q,\epsilon,n}$ is taken in $L^2(\nu_{\epsilon,n})$.

\subsection{Convergence of Eigenvalues} \noindent
The spectral convergence, generalizing Theorem~\ref{spectralconvneumann} to non-uniformly sampled data, is given as follows:

\begin{theorem}\label{spectconvwithq}
Let $\mathcal{M}$ be a manifold without boundary or with Neumann boundary condition. Let $\lambda_i$ and $\tilde{\lambda}^{q,\epsilon,n}_{i}$ denote the $i$th eigenvalues of the Laplace-Beltrami operator, $\Delta$, and the matrix $\tilde{L}_{q,\epsilon,n}$, respectively. With probability greater than $1 - \frac{12}{n^2}$, for any $1 \leq i \leq n$ we have that 
 \begin{equation*}
 |\lambda_{i} - \tilde{\lambda}^{q,\epsilon,n}_{i}| = \mathcal{O}\left( \frac{\sqrt{\log n}}{\epsilon^{d/2+1}n^{1/2}},\epsilon^{1/2} \right),
\end{equation*}
as $\epsilon\to 0$ after $n\to \infty$.
\end{theorem}
The proof for the no boundary or Neumann case follows the same argument as that of Theorem~\ref{spectralconvneumann}. In the current case, we need to deduce several bounds that are analogous to Lemmas \ref{adapted-rbv-supplement}, \ref{matrix-eigenvalues-supplement}, and \ref{weak convergence}. Thus far, we already have Lemma~\ref{nonuniform weak} which is the analogous result to Lemma~\ref{weak convergence}. In particular, we need to deduce bounds by a min-max argument over the following identity,
\BEA
 \| \nabla f  \|^2_{L^2(\mathcal{M})} - \tilde{\lambda}^{q,\epsilon,n}_i = \underbrace{\| \nabla f  \|^2_{L^2(\mathcal{M})}  -  \langle L_{q,\epsilon} f, f \rangle_{L^2(\mathcal{M})}}_{\text{Lemma \ref{nonuniform weak}}}   +   \underbrace{\langle L_{q,\epsilon} f , f  \rangle_{L^2(\mathcal{M})}  - \tilde{\lambda}^{q,\epsilon,n}_i.}_{\text{discretization error}}  \label{genineq2}
\EEA
In the following sequence, we will deduce a spectral error bound induced by the discretization error. Here, the additional difficulty is that since the data is non-uniformly distributed with unknown density, $q$, we need to account for the error induced by the density estimate $q_{\epsilon,n}$ in Equation \eqref{mat_L_qen}. In particular, we would like to ensure that the $i$th eigenvalue of $L_{q,\epsilon}$ is close to $\tilde{\lambda}^{q,\epsilon,n}_i$. To achieve this, we will consider an intermediate matrix, which is a Monte-Carlo discretization of $L_{q,\epsilon}$ reweighted with the sampling density $q$, defined as,  
\BEA
L_{q,\epsilon,n} = \frac{2}{m_2 \epsilon}\left( I - K_{q,\epsilon,n} Q^{-1}\right), \label{Rosascomatrix}
\EEA 
where $(K_{q,\epsilon,n} )_{ij} = \hat{k}_{q,\epsilon}(x_i,x_j)$ is defined in Equation  \eqref{khat_qe} and $Q$ is the diagonal matrix $Q = \textrm{diag}(q(x_1), \dots q(x_n))$. It is an easy calculation to see that $L_{q,\epsilon,n}: L^2(\nu_n) \to L^2(\nu_n)$ is self-adjoint in $L^2(\nu_n)$. Let $\lambda_i^{q,\epsilon}$ and $\lambda_i^{q,\epsilon,n}$ denote the $i$-th eigenvalues of the integral operator $L_{q,\epsilon}$ and the matrix $L_{q,\epsilon,n}$, respectively. We will control the error induced by the discretization in Equation \eqref{genineq2} by the following spectral differences 
\[
\Big|\lambda_i^{q,\epsilon} - \tilde{\lambda}^{q,\epsilon,n}_i\Big| \leq \underbrace{ \Big|\lambda_i^{q,\epsilon} - \lambda^{q,\epsilon,n}_i \Big|}_{\text{see Lemma~\ref{rosascoq}}} + \underbrace{\Big| \lambda^{q,\epsilon,n}_i- \tilde{\lambda}^{q,\epsilon,n}_i\Big|}_{\text{see Lemma~\ref{finalmatrixdiff}}},
\]
where the upper bounds for the last two terms above will be deduced in Lemmas~\ref{rosascoq} and \ref{finalmatrixdiff} below, respectively.

Specifically, for the first spectral error difference above, we have an analogous result to Lemma~\ref{adapted-rbv-supplement} since the matrix $Q^{-1}$ correction in Equation \eqref{Rosascomatrix} yields a Monte-Carlo integration with uniform sampling, 
\begin{lemma} \label{rosascoq}
With probability greater than $1 - \frac{2}{n^2}$, 
\begin{equation*}
    \sup_{1\leq i \leq n } |\lambda^{q,\epsilon}_i - \lambda_i^{q,\epsilon,n}| = \mathcal{O} \Big(\frac{\sqrt{\log n}}{\epsilon^{d/2 + 1} \sqrt{n}}\Big),
\end{equation*}
as $n\to \infty$ for a fixed $\epsilon>0$.
\end{lemma}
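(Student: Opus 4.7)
The plan is to reduce this to a direct application of Proposition 10 of \cite{rosasco2010learning} (as in the proof of Lemma~\ref{adapted rbv}), after absorbing the density $q$ into a similarity transformation so that the relevant integral operator and its discretization are symmetric in matching $L^2$ structures. Set $\mu := q\, dV$, and define the auxiliary symmetric kernel
\BEA
\tilde{k}_\epsilon(x,y) := \frac{\hat{k}_{q,\epsilon}(x,y)}{\sqrt{q(x)\, q(y)}}. \notag
\EEA
The integral operator $\tilde{L}_K$ on $L^2(\mu)$ with kernel $\tilde{k}_\epsilon$ is compact, self-adjoint, and its Monte--Carlo discretization on samples $x_i\sim \mu$ is the $n\times n$ symmetric matrix $A_n$ with entries $\tfrac{1}{n}\tilde{k}_\epsilon(x_i,x_j)=\tfrac{1}{n}\hat{k}_{q,\epsilon}(x_i,x_j)/\sqrt{q(x_i)q(x_j)}$. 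In matrix notation, $A_n=Q^{-1/2}K_{q,\epsilon,n}Q^{-1/2}$, where $K_{q,\epsilon,n}$ is as in Equation~\eqref{Rosascomatrix}.

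The first step is a spectral equivalence argument. The map $f\mapsto f/\sqrt{q}$ is an isometry from $L^2(\mathcal{M})=L^2(dV)$ onto $L^2(\mu)$, and a direct computation shows that if $\hat{K}_{q,\epsilon} f=\sigma f$ in $L^2(dV)$, then $\tilde{L}_K(f/\sqrt q)=\sigma(f/\sqrt q)$ in $L^2(\mu)$. Hence $\hat{K}_{q,\epsilon}$ on $L^2(\mathcal{M})$ and $\tilde{L}_K$ on $L^2(\mu)$ share the same spectrum, so their non-zero eigenvalues agree with the $\sigma^{q,\epsilon}_i$ for which $\lambda^{q,\epsilon}_i=\tfrac{2}{m_2\epsilon}(1-\sigma^{q,\epsilon}_i)$. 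On the discrete side, $A_n = Q^{-1/2}(K_{q,\epsilon,n}Q^{-1})Q^{1/2}$, so $A_n$ is similar to $K_{q,\epsilon,n}Q^{-1}$ and therefore shares its spectrum; the latter eigenvalues are the $\sigma^{q,\epsilon,n}_i$ satisfying $\lambda^{q,\epsilon,n}_i=\tfrac{2}{m_2\epsilon}(1-\sigma^{q,\epsilon,n}_i)$.

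The second step is applying Proposition~10 of \cite{rosasco2010learning} to the pair $(\tilde{L}_K, A_n)$. That result requires a uniform bound on the essential supremum of $\tilde{k}_\epsilon$. Since $q$ is smooth, positive and bounded below on the compact $\mathcal{M}$, and since by Lemma~\ref{q estimate} together with the expansion $K_{q,\epsilon}(q_\epsilon^{-1})=m_0^\partial+\mathcal{O}(\epsilon^{1/2})$ the denominators appearing in $\hat{k}_{q,\epsilon}$ (see Equation~\eqref{khat_qe}) are bounded away from zero uniformly for small $\epsilon$, while $\|k_\epsilon\|_\infty\le 1$, we obtain $\|\tilde{k}_\epsilon\|_\infty=\mathcal{O}(\epsilon^{-d/2})$ uniformly in $n$. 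Proposition~10 of \cite{rosasco2010learning} then yields
\BEA
\sup_{1\le i\le n}|\sigma^{q,\epsilon}_i-\sigma^{q,\epsilon,n}_i|\;\le\;\frac{C\sqrt{\tau}}{\epsilon^{d/2}\sqrt{n}}, \notag
\EEA
with probability at least $1-2e^{-\tau}$; choosing $\tau=\log(n^2)$ gives the probability $1-\tfrac{2}{n^2}$.

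The final step is the algebraic conversion from $\sigma$'s to $\lambda$'s. Multiplying the above bound by $\tfrac{2}{m_2\epsilon}$ produces
\BEA
\sup_{1\le i\le n}|\lambda^{q,\epsilon}_i-\lambda^{q,\epsilon,n}_i|=\mathcal{O}\!\left(\frac{1}{\epsilon^{d/2+1}\sqrt{n}}\right), \notag
\EEA
which is the claimed rate. The main (minor) obstacle is the uniform-in-$\epsilon$ bound on $\|\tilde{k}_\epsilon\|_\infty$; this is the only place where the specific form of $\hat{k}_{q,\epsilon}$ and the positivity of $q$ and of $m_0^\partial$ enter in a non-formal way. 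Everything else is a direct reduction to the uniform-sampling case already treated in Lemma~\ref{adapted rbv}.
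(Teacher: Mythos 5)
Your proposal is correct and follows the same route the paper intends: the paper states this lemma without proof, justifying it in one line by the remark that the $Q^{-1}$ correction in Equation \eqref{Rosascomatrix} reduces the problem to the setting of Lemma \ref{adapted rbv} (Proposition 10 of \cite{rosasco2010learning}). Your conjugation by $Q^{\pm 1/2}$, turning $K_{q,\epsilon,n}Q^{-1}$ into the symmetric normalized kernel matrix of $\tilde{k}_\epsilon(x,y)=\hat{k}_{q,\epsilon}(x,y)/\sqrt{q(x)q(y)}$ for samples from $\mu=q\,dV$, together with the unitary equivalence $f\mapsto f/\sqrt{q}$ on the continuous side, is exactly the detail needed to make that one-line justification rigorous, and the bound $\|\tilde{k}_\epsilon\|_\infty=\mathcal{O}(\epsilon^{-d/2})$ you check is the only hypothesis of Proposition 10 that requires verification.
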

To deduce an upper bound for $\Big| \lambda^{q,\epsilon,n}_i- \tilde{\lambda}^{q,\epsilon,n}_i\Big|$,

(or Lemma~\ref{finalmatrixdiff} below), we first deduce several weak convergence results.
\begin{lemma}
\label{rosasco matrix to operator weak}
With probability $1 -\frac{2}{n^2}$,
\BEA
\left| \langle L_{q,\epsilon,n} R_n f , R_n \phi \rangle_{L^2(\nu_n)} - \langle L_{q,\epsilon} f, \phi \rangle_{L^2(\mathcal{M})}  \right| = \mathcal{O}\left( \frac{\sqrt{\log n}}{\epsilon^{d/2+1}\sqrt{n}} \right). \notag
\EEA 
as $\epsilon \to 0$ after $n \to \infty$. 
\end{lemma}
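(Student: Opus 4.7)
The plan is to split the target error into two contributions, each handled by Hoeffding's inequality, with the key input being the uniform bound $\|\hat{k}_{q,\epsilon}\|_\infty = \mathcal{O}(\epsilon^{-d/2})$ that follows from the definition in Equation \eqref{khat_qe} together with the standing assumption that $q$ is bounded away from zero on $\mathcal{M}$. Concretely, I would write
\[
\langle L_{q,\epsilon,n} R_n f, R_n \phi\rangle_{L^2(\nu_n)} - \langle L_{q,\epsilon}f, \phi\rangle_{L^2(\mathcal{M})} = A_1 + A_2,
\]
where $A_1 = \langle L_{q,\epsilon,n} R_n f - R_n L_{q,\epsilon}f,\, R_n\phi\rangle_{L^2(\nu_n)}$ captures the pointwise Monte-Carlo approximation of the integral operator by the matrix at every data point, and $A_2 = \langle R_n L_{q,\epsilon}f, R_n\phi\rangle_{L^2(\nu_n)} - \langle L_{q,\epsilon}f, \phi\rangle_{L^2(\mathcal{M})}$ is the outer Monte-Carlo error of the weighted inner product against the continuous one.

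For $A_1$, the inner difference at each node $x_i$ equals
\[
(L_{q,\epsilon,n}R_n f)(x_i) - (L_{q,\epsilon}f)(x_i) = -\frac{2}{m_2\epsilon}\Bigg(\frac{1}{n}\sum_{j=1}^n \hat{k}_{q,\epsilon}(x_i, x_j) \frac{f(x_j)}{q(x_j)} - \int_\mathcal{M} \hat{k}_{q,\epsilon}(x_i, y) f(y)\, dV(y)\Bigg),
\]
which is precisely the Monte-Carlo sampling error for $\int_\mathcal{M} \hat{k}_{q,\epsilon}(x_i, y) f(y)\, dV(y)$ when $x_j\sim \mu = q\, dV$ after reweighting the integrand by $q^{-1}$. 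Applying Hoeffding's inequality to this uniformly bounded integrand and then taking a union bound over $i=1,\ldots,n$, with probability at least $1 - \frac{2}{n^2}$ the difference is $\mathcal{O}(\epsilon^{-d/2-1}\log(n)^{3/2}/\sqrt{n})$ uniformly in $i$. Since $|\phi(x_i)/q(x_i)|$ is bounded, averaging over $i$ preserves the rate, so $|A_1| = \mathcal{O}(\epsilon^{-d/2-1}/\sqrt{n})$ after absorbing logarithmic factors as done throughout the paper. For $A_2$, the function $g(x) := \phi(x)(L_{q,\epsilon}f)(x)/q(x)$ satisfies $\|g\|_\infty = \mathcal{O}(\epsilon^{-1})$ from the trivial pointwise bound on $L_{q,\epsilon}f$, and $A_2$ is nothing but $\frac{1}{n}\sum_i g(x_i) - \int_\mathcal{M} g\, d\mu$, so a single application of Hoeffding's inequality yields $|A_2| = \mathcal{O}(\epsilon^{-1}/\sqrt{n})$, which is strictly dominated by $A_1$ as $\epsilon\to 0$.

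Combining the two estimates gives the claimed rate. The only mildly delicate step is the union bound used in the $A_1$ estimate: to achieve uniform control over all $n$ sample points while retaining the stated confidence $1 - \frac{2}{n^2}$, one must choose the Hoeffding deviation on the scale of $\log(n)^{3/2}$, exactly as in the handling of the second term in the proof of Lemma \ref{finite weak result}, and treat the resulting logarithmic factor as absorbed into the $\mathcal{O}$ constants. No new structural difficulty appears beyond that bookkeeping, because the reweighting by $1/q$ built into the measure $\nu_n$ exactly matches the change of variables $d\mu = q\, dV$ used in both Monte-Carlo estimates.
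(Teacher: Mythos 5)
Your proposal is correct and follows essentially the same route as the paper: Hoeffding's inequality applied to the inner Monte-Carlo sum (using that the $1/q$ reweighting converts sampling from $d\mu = q\,dV$ into the volume integral, with $\|\hat{k}_{q,\epsilon}\|_\infty = \mathcal{O}(\epsilon^{-d/2})$), a union bound over the $n$ nodes, and a second Hoeffding application for the outer average. The paper's proof is just a terser version of the same two-step argument, with the same loose bookkeeping of the failure probabilities.
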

\begin{proof}
This is an immediate application of Hoeffding's inequality. Indeed, with high probability, 
\BEA
\left| \frac{1}{n} \sum_{j=1}^n \hat{k}_{q,\epsilon}(x_i,x_j) \frac{f(x_j)}{q(x_j)} - \int_\mathcal{M} \hat{k}(x_i,y) f(y) dV(y) \right| = \mathcal{O}\left( \frac{\sqrt{\log n}}{\epsilon^{d/2} \sqrt{n}}\right). \notag
\EEA
First, we repeat this for all $i$. Second, we obtain a similar bound again using Hoeffding for the outside integral. This yields the result. 
\end{proof} 
\begin{lemma}
With probability $1 - \frac{8}{n^2}$ , we have 
\BEA
\left| \langle \tilde{L}_{\textup{rw},q,\epsilon,n} R_n f, R_n \phi \rangle_{L^2(\nu_{\epsilon,n})} - \langle L_{\textup{rw},q,\epsilon}f, \phi \rangle_{L^2(\mathcal{M})}  \right| = \mathcal{O}\left( \frac{\sqrt{\log n}}{\epsilon^{d/2+1} \sqrt{n}}, \epsilon^{1/2}  \right), \notag
\EEA
as $\epsilon\to 0$ after $n\to\infty$. Here, the matrix $\tilde{L}_{\textup{rw},q,\epsilon,n}$ and integral operator $L_{\textup{rw},q,\epsilon}$ are defined in Equations \eqref{matrix_L_rwqen} and \eqref{intop_L_rwqe}, respectively. 
\end{lemma}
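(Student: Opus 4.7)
The plan is to telescope the target difference through two intermediate objects so that each successive step isolates exactly one source of error. Introduce the auxiliary matrix $\hat L_{\mathrm{rw},q,\epsilon,n}$, defined exactly as $\tilde L_{\mathrm{rw},q,\epsilon,n}$ in \eqref{matrix_L_rwqen} but with every occurrence of the empirical density $q_{\epsilon,n}$ replaced by the continuous (asymptotic) estimator $q_\epsilon$ from the expansion preceding Equation \eqref{intop_L_rwqe}. Then write the target error as $T_1+T_2+T_3$, where
\[
T_1 = \langle \tilde L_{\mathrm{rw},q,\epsilon,n} R_nf,R_n\phi\rangle_{L^2(\nu_{\epsilon,n})}-\langle \tilde L_{\mathrm{rw},q,\epsilon,n} R_nf,R_n\phi\rangle_{L^2(\nu_n)},
\]
\[
T_2 = \langle \tilde L_{\mathrm{rw},q,\epsilon,n} R_nf,R_n\phi\rangle_{L^2(\nu_n)}-\langle \hat L_{\mathrm{rw},q,\epsilon,n} R_nf,R_n\phi\rangle_{L^2(\nu_n)},
\]
\[
T_3 = \langle \hat L_{\mathrm{rw},q,\epsilon,n} R_nf,R_n\phi\rangle_{L^2(\nu_n)}-\langle L_{\mathrm{rw},q,\epsilon}f,\phi\rangle_{L^2(\mathcal{M})}.
\]
Here $T_1$ carries only the discrepancy between the two empirical measures, $T_2$ carries only the density-estimation error inside the kernel, and $T_3$ is the pure Monte--Carlo weak-convergence term.

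For $T_1$, I would apply Lemma~\ref{q estimate}: equation \eqref{qen to q} gives $|q^{-1}_{\epsilon,n}(x_i)-q^{-1}(x_i)|=\mathcal{O}(\epsilon^{-d/2}n^{-1/2},\epsilon^{1/2})$ uniformly with probability $1-2/n^2$, where the $\epsilon^{1/2}$ term comes precisely from the bias $q_\epsilon-q$. Combined with a uniform pointwise bound on $\tilde L_{\mathrm{rw},q,\epsilon,n}R_nf$ --- which holds because this quantity is a Monte--Carlo estimate of the bounded function $L_{\mathrm{rw},q,\epsilon}f$ (essentially $\Delta f+\mathcal O(\epsilon^{1/2})$) --- this gives $|T_1|=\mathcal{O}(\epsilon^{-d/2}n^{-1/2},\epsilon^{1/2})$. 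For $T_2$, the difference $\tilde L_{\mathrm{rw},q,\epsilon,n}-\hat L_{\mathrm{rw},q,\epsilon,n}$ is a difference of two ratios of Monte--Carlo sums whose numerators and denominators perturb only through the replacement $q_{\epsilon,n}\leftrightarrow q_\epsilon$. Applying the identity $A/B-A'/B'=(A-A')/B+A'(B'-B)/(BB')$ together with Lemma~\ref{q estimate} and the fact that both denominators concentrate around the bounded-below function $K_{q,\epsilon}(q_\epsilon^{-1})$, yields a pointwise bound $\mathcal{O}(\epsilon^{-d/2-1}n^{-1/2})$, which survives the outer $L^2(\nu_n)$ averaging.

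For $T_3$, observe that $\hat L_{\mathrm{rw},q,\epsilon,n}$ is by construction the \emph{pure} Monte--Carlo discretization of $L_{\mathrm{rw},q,\epsilon}$: the sample mean of $\epsilon^{-d/2}k_\epsilon(x_i,x_j)\,h(x_j)\,q_\epsilon(x_j)^{-1}$ targets $K_{q,\epsilon}(h\,q_\epsilon^{-1})(x_i)$ because $x_j\sim q$. Hoeffding's inequality applied twice (to the numerator and denominator in the definition of $\hat L_{\mathrm{rw},q,\epsilon,n}$) together with the same ratio expansion as in $T_2$ gives a pointwise error $\mathcal{O}(\epsilon^{-d/2-1}n^{-1/2})$. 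Then one more application of Hoeffding to the outer sum, using $\frac1n\sum_i h(x_i)/q(x_i)\to \int h\,dV$ and the boundedness of $L_{\mathrm{rw},q,\epsilon}f\cdot\phi$, completes $T_3$. Combining, $|T_1|+|T_2|+|T_3|=\mathcal{O}(\epsilon^{-d-1}n^{-1/2},\epsilon^{1/2})$ with the claimed probability $1-8/n^{2}$ after taking the union bound over the four high-probability events (two from Lemma~\ref{q estimate} used in $T_1$ and $T_2$, and two from Hoeffding in $T_3$).

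The main obstacle is the ratio perturbation analysis in $T_2$ and $T_3$: both require the Monte--Carlo denominators $\frac1n\sum_j\epsilon^{-d/2}k_\epsilon(x_i,x_j)q_{\epsilon,n}^{-1}(x_j)$ and its $q_\epsilon$-analogue to be uniformly bounded below in $i$ with high probability, which is needed to control the $(B')^{-1}$ factor. This requires a union bound over all $n$ index values and eats into the probability budget, ultimately fixing the $1-8/n^2$ confidence level. Once this uniform lower bound is in hand, the rest is a routine combination of ratio calculus, Hoeffding's inequality, and the weak expansion of Lemma~\ref{nonuniform weak} (the $\epsilon^{1/2}$ component traces back entirely through $T_1$ to the asymptotic bias $q_\epsilon-q$).
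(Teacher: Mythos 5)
Your proposal is correct and follows essentially the same route as the paper: telescope through intermediate quantities, control the density-estimation error via Lemma~\ref{q estimate}, the Monte--Carlo errors via Hoeffding with union bounds over $i$, handle the quotient structure by a ratio/Taylor expansion with denominators bounded below, and trace the $\epsilon^{1/2}$ term to the bias $q_\epsilon - q$ via Lemma~\ref{nonuniform weak}. The only difference is cosmetic: the paper telescopes in two steps (matrix-minus-restricted-operator in $L^2(\nu_{\epsilon,n})$, then quadrature of the continuum quantity), whereas you peel off the measure change and the kernel-density replacement as separate terms via an auxiliary matrix, but the error sources and rates identified are identical.
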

\begin{remark}
We should point out that the extra factors of $\epsilon$ in the denominator are due to the error in the estimation of the sampling density. The same rate will apply to the next two lemmas.
\end{remark}
\begin{proof}
Notice that 
\BEA
\Bigg| \langle \tilde{L}_{\textrm{rw},q,\epsilon,n} R_n f, R_n \phi \rangle_{L^2(\nu_{\epsilon,n})} - \langle L_{\textrm{rw},q,\epsilon}f, \phi \rangle_{L^2(\mathcal{M})}  \Bigg| &\leq& \left| \langle \tilde{L}_{\textrm{rw},q,\epsilon,n} R_n f - R_n L_{\textrm{rw},q,\epsilon}f, R_n \phi \rangle_{L^2(\nu_{\epsilon,n})} \right| \notag \\
&+& \left| \langle R_n L_{\textrm{rw},q,\epsilon}f, R_n \phi \rangle_{L^2(\nu_{\epsilon,n})} - \langle L_{\textrm{rw},q,\epsilon}f, \phi \rangle_{L^2(\mathcal{M})}  \right|. \notag
\EEA
We begin by proving the first term is $\mathcal{O}\left( \frac{\sqrt{\log n}}{\epsilon^{d/2+1} \sqrt{n}} \right)$. With probability $1 - \frac{2}{n^2}$, 
\BEA
\frac{1}{q_{\epsilon,n}(x_i)} = \frac{1}{q_\epsilon (x_i)} + \mathcal{O}\left(\frac{\sqrt{\log n}}{\epsilon^{d/2} \sqrt{n}} \right) \label{error1/q}\notag
\EEA
for each $1 \leq i \leq n$, from Equation \eqref{qen to qe}. Hence, we can see that 
\BEA
\frac{1}{n}\sum_{k=1}^n \epsilon^{-d/2} k(x_i,x_k) q^{-1}_{\epsilon,n}(x_k) R_nf(x_k) - \frac{1}{n} \sum_{k=1}^n \epsilon^{-d/2} k(x_i,x_k) q^{-1}_{\epsilon}(x_k) R_n f(x_k) \notag 
\EEA
is $\mathcal{O}\left( \frac{\sqrt{\log n}}{\epsilon^{d/2} \sqrt{n}} \right)$, where the average of $\epsilon^{-d/2} k(x_i,x_k)f(x_k)$ is order-1. 
However, again from Hoeffding's inequality we have that with probability $1 - \frac{2}{n^2}$
\BEA
\Bigg|\frac{1}{n} \sum_{k=1}^n \epsilon^{-d/2} k(x_i,x_k) q^{-1}_{\epsilon}(x_k) R_n f(x_k) - \int_\mathcal{M} \epsilon^{-d/2} k(x_i,y)f(y) q^{-1}_\epsilon(y) q(y) dV(y) \Bigg|  = \mathcal{O}\left( \frac{\sqrt{\log n}}{\epsilon^{d/2} \sqrt{n}} \right). \notag
\EEA
This argument is also valid when $f = 1$. From these two bounds above, it follows that, with a total probability of $1 - \frac{4}{n^2}$, we have that 
\BEA
\left| \frac{1}{n}\sum_{k=1}^n \epsilon^{-d/2} k(x_i,x_k) q^{-1}_{\epsilon,n}(x_k) R_nf(x_k) - K_{q,\epsilon}(f q^{-1}_\epsilon)(x_i) \right| =  \mathcal{O}\left( \frac{\sqrt{\log n}}{\epsilon^{d/2} \sqrt{n}} \right), \notag
\EEA
and
\BEA
\left| \frac{1}{n}\sum_{k=1}^n \epsilon^{-d/2} k(x_i,x_k) q^{-1}_{\epsilon,n}(x_k) - K_{q,\epsilon}( q^{-1}_\epsilon)(x_i) \right| =  \mathcal{O}\left( \frac{\sqrt{\log n}}{\epsilon^{d/2} \sqrt{n}} \right), \notag
\EEA
for all $1 \leq i \leq n$. With similar considerations as before, Taylor expansions show that with probability $1- \frac{4}{n^2}$, 
\BEA
\left| \frac{\frac{1}{n}\sum_{k=1}^n \epsilon^{-d/2} k(x_i,x_k) q^{-1}_{\epsilon,n}(x_k) R_nf(x_k)}{\frac{1}{n}\sum_{k=1}^n \epsilon^{-d/2} k(x_i,x_k) q^{-1}_{\epsilon,n}(x_k)} - \frac{K_{q,\epsilon}(f q^{-1}_\epsilon)(x_i)}{K_{q,\epsilon}( q^{-1}_\epsilon)(x_i)} \right| = \mathcal{O}\left( \frac{\sqrt{\log n}}{\epsilon^{d/2} \sqrt{n}} \right). \notag
\EEA
Hence
\BEA
\left| \tilde{L}_{\textrm{rw},q,\epsilon,n}R_n f (x_i) - R_n L_{\textrm{rw},q,\epsilon} f (x_i)  \right| = \mathcal{O}\left(\frac{\sqrt{\log n}}{\epsilon^{d/2+1} \sqrt{n}} \right), \notag
\EEA
for all $1 \leq i \leq n$. Therefore, 
\BEA
\Bigg| \langle \tilde{L}_{\textrm{rw},q,\epsilon,n} R_n f -  R_nL_{\textrm{rw},q,\epsilon}f, R_n \phi \rangle_{L^2(\nu_{\epsilon,n})}\Bigg|  = \left|\frac{1}{n} \sum_{i=1}^n \frac{\phi(x_i) \left(  \tilde{L}_{\textrm{rw},q,\epsilon,n}R_n f (x_i) - R_n L_{\textrm{rw},q,\epsilon} f (x_i) \right) }{q_{\epsilon,n}(x_i)}  \right| = \mathcal{O}\left(\frac{\sqrt{\log n}}{\epsilon^{d/2+1} \sqrt{n}}  \right). \notag
\EEA
This shows that $\left| \langle \tilde{L}_{\textrm{rw},q,\epsilon,n} R_n f - R_n L_{\textrm{rw},q,\epsilon}f, R_n \phi \rangle_{L^2(\nu_{\epsilon,n})} \right|$  is $\mathcal{O}\left( \frac{\sqrt{\log n}}{\epsilon^{d/2+1} \sqrt{n}}\right)$. For the second term, notice that 
\BEA
\Bigg|  \frac{1}{n} \sum_{i=1}^n \frac{\phi(x_i) L_{\textrm{rw},q,\epsilon}f (x_i) }{q_{\epsilon,n}(x_i)} - \langle L_{\textrm{rw},q,\epsilon}f , \phi \rangle_{L^2(\mathcal{M})} \Bigg|    
&=&  \left|  \frac{1}{n} \sum_{i=1}^n \frac{\phi(x_i) L_{\textrm{rw},q,\epsilon}f (x_i) }{q_{\epsilon,n}(x_i)} - \langle L_{\textrm{rw},q,\epsilon}f , \phi / q \rangle_{L^2(\mu)} \right| \notag \\ &\leq&    \left|  \frac{1}{n} \sum_{i=1}^n \frac{\phi(x_i) L_{\textrm{rw},q,\epsilon}f (x_i) }{q(x_i)} - \langle L_{\textrm{rw},q,\epsilon}f , \phi / q \rangle_{L^2(\mu)} \right | 
\notag \\ &+& \mathcal{O}\left(\frac{\sqrt{\log n}}{\epsilon^{d/2} \sqrt{n}} , \epsilon^{1/2} \right) \notag \\ 
&=&  \mathcal{O}\left(\frac{\sqrt{\log n}}{\sqrt{n}} ,\frac{\sqrt{\log n}}{\epsilon^{d/2} \sqrt{n}} , \epsilon^{1/2} \right), \notag 
\EEA
with probability $1 - \frac{4}{n^2}$. Here, we denote by $L^2(\mu)$ as the set of square integrable (equivalence class) of functions with respect to the sampling measure $d\mu = qdV$. The second and third error bounds above follow directly from the estimate in Equation \eqref{qen to q} and the first error bound follows
directly from Hoeffding's argument, using rough estimates to bound $\left| \frac{\phi L_{\textrm{rw},q,\epsilon } f}{q_\epsilon} - \langle L_{\textrm{rw},q,\epsilon}f , \phi / q \rangle_{L^2(\mu)} \right|$. Inserting these two error bounds to the original equation, the proof is completed. 
\end{proof}
\indent Based on this result and Lemma~\ref{nonuniform weak}, one can immediately see the weak convergence of the symmetrized matrix 
$\tilde{L}_{q,\epsilon,n}$ to the integral operator $L_{q,\epsilon}$.
\begin{corollary}
\label{weak Lqen tilde to Lqe}
With probability $1 - \frac{8}{n^2}$, 
\BEA
\left|\langle \tilde{L}_{q,\epsilon,n} R_n f , R_n \phi \rangle_{L^2(\nu_{\epsilon,n})} - \langle L_{q, \epsilon} f , \phi \rangle_{L^2(\mathcal{M})} \right| = \mathcal{O}\left( \frac{\sqrt{\log n}}{\epsilon^{d/2+1} \sqrt{n}} , \epsilon^{1/2} \right), \notag
\EEA
as $\epsilon\to 0$ after $n\to\infty$,
where the integral operator $L_{q, \epsilon}$ is defined in Equation \eqref{intop_L_qe}, and the matrix $\tilde{L}_{q,\epsilon,n}$ is defined in  Equation \eqref{mat_L_qen}. 
\end{corollary}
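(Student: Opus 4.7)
The plan is to exploit the symmetrization structure: both $\tilde{L}_{q,\epsilon,n}$ and $L_{q,\epsilon}$ are, by their very definitions in Equations \eqref{mat_L_qen} and \eqref{intop_L_qe}, symmetric parts of the "random walk" operators $\tilde{L}_{\mathrm{rw},q,\epsilon,n}$ and $L_{\mathrm{rw},q,\epsilon}$ in $L^2(\nu_{\epsilon,n})$ and $L^2(\mathcal{M})$ respectively. The bilinear forms therefore split as
\begin{equation*}
\langle \tilde{L}_{q,\epsilon,n} R_n f, R_n \phi \rangle_{L^2(\nu_{\epsilon,n})} = \tfrac{1}{2}\langle \tilde{L}_{\mathrm{rw},q,\epsilon,n} R_n f, R_n \phi \rangle_{L^2(\nu_{\epsilon,n})} + \tfrac{1}{2}\langle \tilde{L}_{\mathrm{rw},q,\epsilon,n} R_n \phi, R_n f \rangle_{L^2(\nu_{\epsilon,n})},
\end{equation*}
by the defining property of the adjoint in $L^2(\nu_{\epsilon,n})$, and analogously for $\langle L_{q,\epsilon} f, \phi \rangle_{L^2(\mathcal{M})}$ on the continuum side.

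Next, I would apply the preceding lemma twice: once with ordered pair $(f,\phi)$ to control the difference of the first summands, and once with $(\phi,f)$ (permissible since both functions lie in $C^\infty(\mathcal{M})$) to control the difference of the second summands. Each invocation supplies a bound of order $\mathcal{O}(\epsilon^{-(d+1)}n^{-1/2}, \epsilon^{1/2})$. A triangle inequality then yields the claimed rate, the factor of $\tfrac{1}{2}$ being absorbed into the big-$\mathcal{O}$.

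The only delicate point is the probability bookkeeping, which is what I expect to be the main obstacle. The preceding lemma gives probability $1-8/n^2$, and a naive union bound over the two applications would inflate this to $1-16/n^2$. However, inspecting the proof of the preceding lemma, the high-probability events are exactly the Hoeffding events for (i) the pointwise approximation $q_{\epsilon,n}(x_i) \approx q_\epsilon(x_i)$ at each sample point, (ii) the Monte Carlo averages appearing in the numerator and denominator of $\tilde{L}_{\mathrm{rw},q,\epsilon,n}$, and (iii) the outer Monte Carlo average against $q^{-1}$. Events (i) and (ii) depend only on $X$ and the kernel $k_\epsilon$ (they are uniform in the test functions up to constants absorbed into $\mathcal{O}$), while event (iii) involves the specific summand but occurs with probability $1-2/n^2$ for each pair. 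Hence the same event of probability at least $1-8/n^2$ suffices for both applications, and no deterioration occurs.

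In summary, the core of the argument is trivial once the symmetrization is unpacked; the genuine work has already been done in the preceding lemma, and what remains is a symmetrization-via-swapping-arguments trick plus careful probability accounting to preserve the $1-8/n^2$ confidence level.
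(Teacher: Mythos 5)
Your proof is correct and is exactly the argument the paper leaves implicit (the paper merely states that the corollary "immediately" follows from the preceding lemma and Lemma~\ref{nonuniform weak}): unpack both symmetrizations via the adjoint, apply the random-walk lemma to the ordered pairs $(f,\phi)$ and $(\phi,f)$, and combine by the triangle inequality. The only soft spot is the probability accounting: the inner Hoeffding events for the Monte Carlo sums $\frac{1}{n}\sum_k k_\epsilon(x_i,x_k)q_{\epsilon,n}^{-1}(x_k)f(x_k)$ genuinely depend on the test function, so they are not shared between the two applications and a strict union bound degrades the constant slightly below $1-\frac{8}{n^2}$ — but this is a constant-level issue the paper itself glosses over and does not affect the asymptotic statement.
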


Given the weak consistency results given by Corollary \ref{weak Lqen tilde to Lqe}  and Lemma \ref{rosasco matrix to operator weak}, we now deduce the spectral consistency between matrices $L_{q, \epsilon,n}$ and $\tilde{L}_{q,\epsilon,n}$.

\begin{lemma}\label{finalmatrixdiff}
With probability $1 - \frac{10}{n^2}$,
\BEA
\sup_{i \leq i \leq n} | \tilde{\lambda}^{q,\epsilon,n} - \lambda^{q,\epsilon,n} | = \mathcal{O}\left( \frac{\sqrt{\log n}}{\epsilon^{d/2+1} \sqrt{n}}, \epsilon^{1/2} \right), \notag
\EEA 
as $\epsilon\to 0$ after $n\to\infty$.
\end{lemma}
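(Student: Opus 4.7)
The plan is to insert the eigenvalue $\lambda_i^{q,\epsilon}$ of the continuous integral operator $L_{q,\epsilon}$ between the two matrix eigenvalues and control each half separately. By the triangle inequality,
\[
|\tilde\lambda_i^{q,\epsilon,n} - \lambda_i^{q,\epsilon,n}| \le \underbrace{|\tilde\lambda_i^{q,\epsilon,n} - \lambda_i^{q,\epsilon}|}_{(\mathrm{I})} + \underbrace{|\lambda_i^{q,\epsilon} - \lambda_i^{q,\epsilon,n}|}_{(\mathrm{II})}.
\]
Part $(\mathrm{II})$ is bounded immediately by Lemma~\ref{rosascoq} at rate $\mathcal{O}(\epsilon^{-d/2-1}/\sqrt{n})$ with probability $1-2/n^2$, and is dominated by the target rate. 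The bulk of the work is in controlling $(\mathrm{I})$, for which the strategy is to adapt the min-max argument of Theorem~\ref{spectralconv} with $\langle L_{q,\epsilon}f,f\rangle_{L^2(\mathcal{M})}$ playing the role of the Dirichlet energy $\|\nabla f\|_{L^2(\mathcal{M})}^2$ and with Corollary~\ref{weak Lqen tilde to Lqe} in the role of Lemma~\ref{weak convergence}. The corollary contributes $8/n^2$ to the failure probability, so combined with the Lemma~\ref{rosascoq} contribution the total becomes $1-10/n^2$, as stated.

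Concretely, for any $i$-dimensional smooth $S\subset C^\infty(\mathcal{M})$ and $f \in S$ with $\|f\|_{L^2(\mathcal{M})}=1$, I would decompose
\[
\langle L_{q,\epsilon}f,f\rangle_{L^2(\mathcal{M})} - \tilde\lambda_i^{q,\epsilon,n}\|R_n f\|_{L^2(\nu_{\epsilon,n})}^2 = \bigl[\langle L_{q,\epsilon}f,f\rangle_{L^2(\mathcal{M})} - \langle \tilde L_{q,\epsilon,n}R_n f,R_n f\rangle_{L^2(\nu_{\epsilon,n})}\bigr] + \bigl[\langle \tilde L_{q,\epsilon,n}R_n f,R_n f\rangle_{L^2(\nu_{\epsilon,n})} - \tilde\lambda_i^{q,\epsilon,n}\|R_n f\|_{L^2(\nu_{\epsilon,n})}^2\bigr],
\]
bound the first bracket by Corollary~\ref{weak Lqen tilde to Lqe}, and handle the second by a matrix min-max. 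Choosing $S$ as the span of the first $i$ eigenfunctions of $L_{q,\epsilon}$ (smooth since $\hat k_{q,\epsilon}$ is smooth) makes $\max_{f\in S}\langle L_{q,\epsilon}f,f\rangle_{L^2(\mathcal{M})}=\lambda_i^{q,\epsilon}$, and applying the min-max characterization of $\tilde\lambda_i^{q,\epsilon,n}$ to the $i$-dimensional subspace $R_n(S)\subset L^2(\nu_{\epsilon,n})$ (which is genuinely $i$-dimensional with high probability) yields the upper bound $\tilde\lambda_i^{q,\epsilon,n} \le \lambda_i^{q,\epsilon} + \mathcal{O}(\epsilon^{-d-1}/\sqrt{n},\epsilon^{1/2})$. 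The reverse inequality $\lambda_i^{q,\epsilon} \le \tilde\lambda_i^{q,\epsilon,n} + \mathcal{O}(\epsilon^{-d-1}/\sqrt{n},\epsilon^{1/2})$ follows from the analogous rearrangement -- taking the same identity, applying subadditivity of max, and choosing the smooth subspace that minimizes $\max_f \langle L_{q,\epsilon}f,f\rangle_{L^2(\mathcal{M})}$ -- mirroring the second half of the proof of Theorem~\ref{spectralconv} almost verbatim.

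A key auxiliary estimate needed to close the Rayleigh-quotient argument is the norm distortion
\[
\|R_n f\|_{L^2(\nu_{\epsilon,n})}^2 = \|f\|_{L^2(\mathcal{M})}^2 + \mathcal{O}(\epsilon^{-d/2}/\sqrt{n},\,\epsilon^{1/2}),
\]
which I would derive by Hoeffding's inequality applied to $f^2/q$ (converting the sampling measure $d\mu = q\,dV$ into $dV$) and then Lemma~\ref{q estimate} to swap $q$ for the empirical estimator $q_{\epsilon,n}$; this converts the $L^2(\nu_{\epsilon,n})$ norm appearing in the second bracket into the $L^2(\mathcal{M})$ norm used to normalize $f$.

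The main obstacle is subtle rather than deep: Corollary~\ref{weak Lqen tilde to Lqe} is stated pointwise in the test functions, while the min-max requires uniformity over unit vectors in the chosen finite-dimensional smooth subspace. This is resolved by the equivalence of norms on a finite-dimensional space, so that the implicit $C^3$-dependence in Corollary~\ref{weak Lqen tilde to Lqe} is absorbed into a constant determined by that specific subspace. A secondary subtlety worth flagging is that $\tilde L_{q,\epsilon,n}$ and $L_{q,\epsilon,n}$ are self-adjoint with respect to different discrete inner products ($L^2(\nu_{\epsilon,n})$ versus $L^2(\nu_n)$), so a direct Weyl-type matrix perturbation is unavailable; factoring through the continuous eigenvalue $\lambda_i^{q,\epsilon}$ is precisely what sidesteps this.
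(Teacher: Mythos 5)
Your decomposition is genuinely different from the paper's. The paper never inserts the continuous eigenvalue $\lambda_i^{q,\epsilon}$ between the two matrix eigenvalues: it first combines Corollary~\ref{weak Lqen tilde to Lqe} with Lemma~\ref{rosasco matrix to operator weak} to get a weak bound comparing the quadratic forms of $\tilde L_{q,\epsilon,n}$ (in $L^2(\nu_{\epsilon,n})$) and $L_{q,\epsilon,n}$ (in $L^2(\nu_n)$) directly, and then runs a min-max argument entirely at the matrix--matrix level, over \emph{discrete} $i$-dimensional subspaces, choosing $S_{\min}$ (resp.\ $S'_{\min}$) to be the eigenvector span of one matrix and bounding the difference of Rayleigh quotients for a fixed $v$ by picking $f\in C^\infty(\mathcal{M})$ with $R_nf=v$. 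Your route — Lemma~\ref{rosascoq} for $|\lambda_i^{q,\epsilon}-\lambda_i^{q,\epsilon,n}|$ and a matrix-to-integral-operator min-max for $|\tilde\lambda_i^{q,\epsilon,n}-\lambda_i^{q,\epsilon}|$ — arrives at the same probability $1-\tfrac{10}{n^2}$ by a different bookkeeping and has the advantage that one of the two halves (Lemma~\ref{rosascoq}) is fully rigorous RKHS machinery. Your observation that the two matrices are self-adjoint with respect to different inner products, so a direct Weyl bound is unavailable, is exactly the reason the paper also resorts to a min-max rather than a perturbation bound.

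However, the reverse inequality in your part $(\mathrm{I})$ does not work as described. Choosing the smooth subspace that minimizes $\max_f\langle L_{q,\epsilon}f,f\rangle_{L^2(\mathcal{M})}$ — i.e.\ the span of the first $i$ eigenfunctions of $L_{q,\epsilon}$ — and running the subadditivity argument gives you $\tilde\lambda_i^{q,\epsilon,n}\le \max_{f\in S}\mathrm{RQ}_{\tilde L}(R_nf)\le \lambda_i^{q,\epsilon}+\mathrm{err}$, which is the \emph{forward} direction again: the min-max characterization of $\tilde\lambda_i^{q,\epsilon,n}$ only lower-bounds $\max_{v\in R_n(S)}\mathrm{RQ}_{\tilde L}(v)$ by $\tilde\lambda_i^{q,\epsilon,n}$, never upper-bounds it. To get $\lambda_i^{q,\epsilon}\le\tilde\lambda_i^{q,\epsilon,n}+\mathrm{err}$ you must instead take the subspace on which $\max\mathrm{RQ}_{\tilde L}$ attains its minimum $\tilde\lambda_i^{q,\epsilon,n}$, namely the span of the first $i$ eigenvectors of $\tilde L_{q,\epsilon,n}$ — a discrete, data-dependent subspace — and lift it to smooth functions before Corollary~\ref{weak Lqen tilde to Lqe} can be applied. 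This is precisely where your "equivalence of norms on a finite-dimensional space" fix breaks down: for the fixed continuous eigenfunction subspace the constant is harmless, but for a lifting of a random, $n$-dependent eigenvector subspace the $C^3$-norms of the interpolants are not controlled, so the implied constant is not uniform. (The paper's own proof has the same unaddressed step when it writes "using $f\in C^\infty(\mathcal{M})$ satisfying $R_nf=v$" for $v$ in the discrete optimal subspace, so the gap is shared; but your write-up additionally misidentifies which subspace to choose, which makes the reverse inequality as stated a non sequitur rather than merely an incompletely justified one.)
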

\begin{proof}
Notice that, combining Corollary \ref{weak Lqen tilde to Lqe}  and Lemma \ref{rosasco matrix to operator weak}, we have that with probability $1 - \frac{10}{n^2}$, for $\phi, f \in C^\infty(\mathcal{M})$, 
\BEA
\quad\quad\left| \langle \tilde{L}_{q,\epsilon,n} R_n f , R_n \phi \rangle_{L^2(\nu_{\epsilon,n})} - \langle L_{q,\epsilon,n} R_n f, R_n \phi \rangle_{L^2(\nu_n)}  \right| = \mathcal{O}\left( \frac{\sqrt{\log n}}{\epsilon^{d/2+1}\sqrt{n}} \right).\label{weakconvmatrices}
\EEA
By construction, the eigenvalues of both operators above satisfy the min-max principle in their respective inner product spaces. Hence, we perform a similar min-max argument to before, but between the two matrices. For completion, we include it in full detail here.  
Fix some $i$ dimensional vector subspace $S = \textrm{ span } \{v_1 , \dots v_i \}$. Notice that 
\BEA
\frac{\langle \tilde{L}_{q,\epsilon,n} v , v \rangle_{L^2(\nu_{\epsilon,n})}}{\|v\|^2_{L^2(\nu_{\epsilon,n})}} =  \frac{\langle \tilde{L}_{q,\epsilon,n} v , v \rangle_{L^2(\nu_{\epsilon,n})}}{\|v\|^2_{L^2(\nu_{\epsilon,n})}} - \frac{\langle L_{q,\epsilon,n} v , v \rangle_{L^2(\nu_n)}}{\| v \|^2_{L^2(\nu_n)}} + \frac{\langle L_{q,\epsilon,n} v , v \rangle_{L^2(\nu_n)}}{\| v \|^2_{L^2(\nu_n)}}, \notag
\EEA
for any $v \in S$. Maximizing both sides over all $v \in S$ with $L^2(\nu_n)$ norm $1$, subadditivity of the maximum shows that 
\BEA
\max_{v \in S} \frac{\langle \tilde{L}_{q,\epsilon,n} v , v \rangle_{L^2(\nu_{\epsilon,n})}}{\|v\|^2_{L^2(\nu_{\epsilon,n})}}   \leq \max_{v \in S} \left(\frac{\langle \tilde{L}_{q,\epsilon,n} v , v \rangle_{L^2(\nu_{\epsilon,n})}}{\|v\|^2_{L^2(\nu_{\epsilon,n})}} - \frac{\langle L_{q,\epsilon,n} v , v \rangle_{L^2(\nu_n)}}{\| v \|^2_{L^2(\nu_n)}} \right)  +  \max_{v \in S} \frac{\langle L_{q,\epsilon,n} v , v \rangle_{L^2(\nu_n)}}{\| v \|^2_{L^2(\nu_n)}}.\notag
\EEA
Denote by $S_{\textrm{min}}$ the specific vector subspace of dimension $i$ on which the quantity
\BEA
\max_{v \in S} \frac{\langle L_{q,\epsilon,n} v , v \rangle_{L^2(\nu_n)}}{\| v \|^2_{L^2(\nu_n)}},\notag
\EEA
is minimized. We now have 
\BEA
\max_{v \in S_{\textrm{min}}} \frac{\langle \tilde{L}_{q,\epsilon,n} v , v \rangle_{L^2(\nu_{\epsilon,n})}}{\|v\|^2_{L^2(\nu_{\epsilon,n})}}   \leq \max_{v \in S_{\textrm{min}}} \left(\frac{\langle \tilde{L}_{q,\epsilon,n} v , v \rangle_{L^2(\nu_{\epsilon,n})}}{\|v\|^2_{L^2(\nu_{\epsilon,n})}} - \frac{\langle L_{q,\epsilon,n} v , v \rangle_{L^2(\nu_n)}}{\| v \|^2_{L^2(\nu_n)}} \right) +  \lambda^{q,\epsilon,n}_i.\notag
\EEA
Again, certainly the left-hand side is bounded below by the minimum over all $i$ dimensional linear subspaces $S$. Hence, 
\BEA
\tilde{\lambda}^{q,\epsilon,n}_i - \lambda_i^{q,\epsilon,n} \leq \max_{v \in S_{\textrm{min}}} \left(\frac{\langle \tilde{L}_{q,\epsilon,n} v , v \rangle_{L^2(\nu_{\epsilon,n})}}{\|v\|^2_{L^2(\nu_{\epsilon,n})}} - \frac{\langle L_{q,\epsilon,n} v , v \rangle_{L^2(\nu_n)}}{\| v \|^2_{L^2(\nu_n)}} \right). \notag
\EEA
The exact same argument can be applied to conclude that 
\BEA
\lambda^{q,\epsilon,n}_i - \tilde{\lambda}_i^{q,\epsilon,n} \leq \max_{v \in S'_{\textrm{min}}} \left(-\frac{\langle \tilde{L}_{q,\epsilon,n} v , v \rangle_{L^2(\nu_{\epsilon,n})}}{\|v\|^2_{L^2(\nu_{\epsilon,n})}} + \frac{\langle L_{q,\epsilon,n} v , v \rangle_{L^2(\nu_n)}}{\| v \|^2_{L^2(\nu_n)}} \right). \notag
\EEA
Hence, it suffices to prove a bound for 
\BEA
\label{weak finite normalized bound}
\left|\frac{\langle \tilde{L}_{q,\epsilon,n} v , v \rangle_{L^2(\nu_{\epsilon,n})}}{\|v\|^2_{L^2(\nu_{\epsilon,n})}} - \frac{\langle L_{q,\epsilon,n} v , v \rangle_{L^2(\nu_n)}}{\| v \|^2_{L^2(\nu_n)}} \right|
\EEA
for fixed $v$. Without loss of generality, assume $\|v\|^2_{L^2(\nu_n)} = 1$. Since $q_{\epsilon,n}$ estimates $q$, it follows immediately that 
\BEA
\|v\|^2_{L^2(\nu_{\epsilon,n})} = 1 + \mathcal{O}\left( \frac{\sqrt{\log n}}{\sqrt{n} \epsilon^{d/2}} , \epsilon^{1/2} \right).  \notag 
\EEA
Plugging this to Equation \eqref{weak finite normalized bound}, we obtain  
\BEA
\Bigg|\frac{\langle \tilde{L}_{q,\epsilon,n} v , v \rangle_{L^2(\nu_{\epsilon,n})}}{1 + \mathcal{O}\left( \frac{\sqrt{\log n}}{\sqrt{n} \epsilon^{d/2}}, \epsilon^{1/2} \right)} - \langle L_{q,\epsilon,n} v , v \rangle_{L^2(\nu_n)} \Bigg| &\leq&  \left|\langle \tilde{L}_{q,\epsilon,n} v , v \rangle_{L^2(\nu_{\epsilon,n})} - \langle L_{q,\epsilon,n} v , v \rangle_{L^2(\nu_n)} \right| \notag \\ &+& \langle \tilde{L}_{q,\epsilon,n} v , v \rangle_{L^2(\nu_{\epsilon,n})}\mathcal{O}\left( \frac{\sqrt{\log n}}{\sqrt{n} \epsilon^{d/2}}, \epsilon^{1/2} \right) \label{ineqlast}
\EEA
But it follows from Corollary \ref{weak Lqen tilde to Lqe} that
$$
\langle \tilde{L}_{q,\epsilon,n} v , v \rangle_{L^2(\nu_{\epsilon,n})} = \mathcal{O}\left( 1 , \frac{\sqrt{\log n}}{\epsilon^{d/2+1}\sqrt{n}} \right).
$$ 
Using the previously mentioned bound in Equation  \eqref{weakconvmatrices} on the first term of Equation \eqref{ineqlast}, using $f \in C^\infty(\mathcal{M})$ satisfying $R_n f = v$, we conclude the result.  
\end{proof}
\indent With these lemmas, we can prove  Theorem~\ref{spectconvwithq} following the argument of Theorem~\ref{spectralconvneumann} on Equation \eqref{genineq2}. This concludes our discussion on the spectral convergence.
\begin{remark}
When considering the Dirichlet case, note that now $n_0$, the number of points inside $\mathcal{M} \setminus \mathcal{M}_{\epsilon^\gamma}$, becomes a binomial random variable with success rate 
$$
\int_{\mathcal{M} \setminus \mathcal{M}_{\epsilon^\gamma}} q(y) dV(y). 
$$
Similarly for $n_1$. Since there are constants $q_{\textup{min}}$ and  $q_{\textup{max}}$ such that 
$$
q_{\textup{min}} \leq q \leq q_{\textup{max}}, 
$$
it follows that 
$$
q_{\textup{min}} \textup{Vol}(\mathcal{M} \setminus \mathcal{M}_{\epsilon^\gamma}) \leq \int_{\mathcal{M} \setminus \mathcal{M}_{\epsilon^\gamma}} q(y) dV(y) \leq \textup{Vol}(\mathcal{M} \setminus \mathcal{M}_{\epsilon^\gamma}) q_{\textup{max}}.
$$
Hence, the analysis for the Dirichlet follows the same arguments as before, with the primary difference being that constants involving $q_{\textup{min}}$ and $q_{\textup{max}}$ are absorbed into the big-oh notation in the convergence of eigenvalues. 
\end{remark}

\subsection{Convergence of Eigenvectors} \noindent 
Recall that the error analysis of the eigenvectors (see Theorem~\ref{conveigvec}) relies on the $L^2$ convergence result in Lemma~\ref{discrete norm lemma}, which is deduced from Lemma \ref{continuous norm lemma} and concentration inequalities. 
The following two lemmas are analogues of Lemma ~\ref{continuous norm lemma} and Lemma \ref{discrete norm lemma} for non-uniformly sampled data, respectively. The first follows immediately from Lemma \ref{continuous norm lemma} along with Equation \eqref{q pointwise}
\begin{lemma}
Let $0 < \gamma < 1/2$, and $\epsilon>0$ such that $\epsilon^\gamma$ is less than the radius of the normal collar, $r_C$. Let $f \in C^\infty(\mathcal{M})$ be a Neumann eigenfunction of $\Delta$. Then 
\BEA
\|L_{q,\epsilon} f - \Delta f \|_{L^2(\mathcal{M})} = \mathcal{O}\left(\epsilon^{\gamma/2}\right). \notag
\EEA
\end{lemma}

The next lemma follows using the same argument as in Lemma \ref{discrete norm lemma}, but correcting by the estimate for the sampling density $q$. This is accomplished by using the innerproduct $L^2(\nu_{\epsilon,n})$. 
\begin{lemma}
Fix $0 < \gamma < 1/2$, and let $f \in C^\infty(\mathcal{M})$ be a Neumann eigenfunction of $\Delta$. Then with probability higher than $ 1 - \frac{6}{n^2}$,   
\BEA
\|\widetilde{L}_{q,\epsilon,n} R_n f - R_n \Delta f\|_{L^2(\nu_{\epsilon,n})} = \mathcal{O}\left( \frac{\sqrt{\log n}}{\epsilon^{d/2+1}\sqrt{n}}, \epsilon^{\gamma/2} \right). \notag
\EEA
\end{lemma} 
\begin{proof}
Note first that using the same argument in Lemma \ref{discrete norm lemma}, with $L^2(\mu_n)$ norms replaced by the $L^2(\nu_n)$ norms, along with the previous lemma, one sees that 
\BEA
\| \tilde{L}_{q, \epsilon,n} R_n f - R_n \Delta f \|_{L^2(\nu_n)} = \mathcal{O}\left(\frac{\sqrt{ \log (n) }}{\epsilon^{d/2+1} \sqrt{n}}, \epsilon^{\gamma/2} \right). \notag
\EEA
To see the rate for $L^2(\nu_{\epsilon,n})$ norm, note that 
\BEA
\| \tilde{L}_{q, \epsilon,n} R_n f - R_n \Delta f \|^2_{L^2(\nu_{\epsilon,n})} = \| \tilde{L}_{q, \epsilon,n} R_n f - R_n \Delta f \|^2_{L^2(\nu_{\epsilon,n})}  
- \| \tilde{L}_{q, \epsilon,n} R_n f - R_n \Delta f \|^2_{L^2(\nu_{n})} 
+ \mathcal{O}\left(\frac{ \log (n)}{\epsilon^{d+2}n}, \epsilon^{\gamma} \right).  \notag
\EEA
Expanding the above norms and using Lemma \ref{q estimate}, one sees that the error coming from the difference of norms above is the product of $\mathcal{O}\left( \frac{1}{\epsilon^{d/2} \sqrt{n}}\right)$ and $\mathcal{O}\left(\frac{ \log (n)}{\epsilon^{d+2}n}, \epsilon^{\gamma}\right)$. Clearly the second term dominates. Taking a square root yields the result.  
\end{proof}
\noindent We now have all necessary results for the nonuniform case, as the proofs for uniform sampling generalize to this setting immediately. Without repeating the proof, we state the main result.

\begin{theorem}\label{conveigvecwithq}
Let $\Delta$ denote the Neumann Laplacian for manifold with boundary. For any $\ell$, there is a constant $c_\ell$ such that if $C' \left( \frac{\sqrt{\log n}}{\epsilon^{d/2+1}n^{1/2}} + \epsilon^{1/2}  \right) < c_\ell$, then with probability higher than $1-  \frac{2k^2 + 6k + 24}{n^2}$,  for any normalized eigenvector $u$ of $\tilde{L}_{\epsilon,n}$ with eigenvalue $\tilde{\lambda}^{\epsilon,n}_\ell$, there is a normalized eigenfunction $f$ of $\Delta$ with eigenvalue $\lambda_\ell$ such that
\begin{equation*}
    \| R_n f - u \|_{L^2(\nu_{\epsilon,n})} =   \mathcal{O}\left(\frac{ \sqrt{\log (n)} }{\epsilon^{d/2+1}\sqrt{n}},\epsilon^{\gamma/2} \right),
\end{equation*}
as $\epsilon\to 0$ after $n\to\infty$, where $k$ is the geometric multiplicity of eigenvalue $\lambda_\ell$, and $0<\gamma<1/2$. 
\end{theorem}

\begin{remark}
For closed manifold (compact with no boundary), the same result can be attained with $\epsilon^{\gamma/2}$ replaced by $\epsilon$ so we have a significantly improved estimate, extending the same conclusion in Remark~\ref{improvedrate} to non-uniformly distributed data.
For the Dirichlet eigenvalues, note that by the same reasoning as before, $L_{q,\epsilon}f$ is close in $L^2(\mathcal{M}_{\epsilon^\gamma})$ to $\Delta f$. Using the same argument as Lemma \ref{truncated discrete norm} but with discrete norms which correct for the sampling density, one can obtain an analogue for Lemma \ref{truncated discrete norm} for the nonuniform case. This allows the same argument as before to be used to obtain an analogue of Theorem~\ref{conveigvec-dirichlet-supplement} for the nonuniform case.
\end{remark}


\end{document}